\DeclareMathAlphabet{\mathpzc}{OT1}{pzc}{m}{it}
\newtheorem{defi}{Definition}
\newtheorem{theorem}[defi]{Theorem}
\newcommand{\Oad}{\mathcal{O}^{\text{ad}}}
\newcommand{\R}{\mathbb{R}}	
\newcommand{\eps}{\varepsilon}	
\newcommand{\Kcrit}{K_{I_c}}
\newcommand{\spur}{\text{tr }}
\nomenclature{\spur}{trace}
\numberwithin{equation}{section} 
\title{Gradient Based Biobjective Shape Optimization to Improve Reliability and Cost of Ceramic Components}
\author{\textsc{O.\ T.\ Doganay,  \added{H.\ Gottschalk,  C.\ Hahn}}\\\textsc{K.\ Klamroth, J.\ Schultes, M.\ Stiglmayr}\\
School of Mathematics and Natural Sciences\\ University of Wuppertal\thanks{\texttt{$\{$doganay,schultes,hahn,stiglmayr,gottschalk,klamroth$\}$@math.uni-wuppertal.de}}}
\begin{document}


\maketitle

\begin{abstract}
We consider the simultaneous optimization of the reliability and the cost of a ceramic component in a biobjective PDE constrained shape optimization problem. A probabilistic Weibull-type model is used to assess the probability of failure of the component under tensile load, while the cost is 
assumed to be proportional to the  volume of the component. Two different gradient-based optimization methods are suggested and compared at 2D test cases. The numerical implementation is based on a first discretize then optimize strategy and benefits from  efficient gradient computations using adjoint equations. The resulting approximations of the Pareto front nicely exhibit  the trade-off between reliability and cost and give rise to innovative shapes that compromise between these conflicting  objectives.
\end{abstract}

\noindent\textbf{Key words:} biobjective shape optimization, shape gradients, probability of failure, descent algorithms

\noindent\textbf{MSC (2010): } 90B50, 49Q10, 
65C50, 60G55

\section{Introduction}
\label{sec:Intro}
The optimization of the design of mechanical structures is a central task in mechanical engineering. If the material for a component is chosen and the use cases are defined, implying in particular the mechanical loads, then the central task of engineering design is to define the shape of the component. Among all possible choices, those shapes are preferred that guarantee the desired functionality at minimal cost. The functionality, however, is only guaranteed if the mechanical integrity of the component is preserved. The fundamental design requirements of functional integrity and cost are almost always in conflict, which makes mechanical engineering an optimization problem with at least two objective functions to consider.   

Mathematically, the task of choosing the shape of a structure is formulated by the theory of shape optimization, see e.g., \cite{allaire,bucur,haslinger,sokolowski} for an introduction. We thus consider admissible shapes $\Omega\subseteq \R^p$, $p=2,3$, 
along with an objective function $f(\Omega)$ which returns lower values for better designs. The task then is to  find an admissible shape $\Omega^*\in\arg\min f(\Omega)$. The existence of optimal shapes has been studied in \cite{chenais,fuji,haslinger} -- for the specific objective function $f$ of compliance see \cite{allaire}. On the algorithmic side, the adjoint approach to shape calculus has led to efficient strategies to calculate shape gradients, see e.g., \cite{conti,delfour,eppler2,eppler,laurain,schulz,schulz2,sokolowski}.  While theory and numerical algorithms of shape calculus are highly developed mathematically, most publications in the field neither deal with multiobjective optimization problems, nor directly consider mechanical integrity as one of the objective functions, see \cite{haslinger,allaire:2008,duysinx:1998,picelli:2018}  for some remarkable exceptions.    

In mechanical engineering, mechanical integrity is one of the central objectives, see e.g., \cite{baeker}. However, if objectives like the ultimate load that the structure can bear or the fatigue life of a component are formulated deterministically, then the objective function is in general non differentiable as it only depends on the point of maximal stress. In numerical shape optimization this would lead to shape gradients concentrated on a single node, resulting in highly instable optimization schemes. At the same time, as material properties are subject to considerable scatter, a deterministic approach is not realistic. To overcome these two shortcomings, an alternative probabilistic approach to mechanical integrity has been proposed by some of the authors and others 
\cite{PaperHahn,bolten,gottschalk3,gottschalk2,gottschalk,schmitz,schmitz3,schmitz2}, which has a smoothing effect on the singularities that are typical for deterministic models. Note that the probabilistic description of the ultimate strength of ceramics has become a standard in material  engineering since the ground breaking work of Weibull, see e.g. \cite{baeker,bruecknerfoit,munz,riesch,weibull}.       

In practice, there usually is a trade-off between the mechanical integrity of a  structure and its volume (cost), since an improved mechanical integrity usually comes at the cost of a larger volume. Instead of presetting a fixed bound on the allowable volume, the trade-off between these two conflicting goals 
can be analyzed in a biobjective model. Other relevant objective functions may be, for example, the minimal buckling load of a structure or its minimal natural frequency, see, for example, \cite{haslinger}. For a general introduction into the field of multiobjective optimization we refer to \cite{ehrgott,Miettinen1999}. In the context of shape optimization problems, two major solution approaches can be distinguished: Metaheuristic and, in particular, evolutionary algorithms are widely applicable solution paradigms that do not utilize the particular structure of a given problem \cite{chir:mult:2018,Deb2001,Deb2002,zava:asur:2014}. However, in combination with expensive numerical simulations such approaches tend to be inefficient. On the other hand, gradient-based algorithms \cite{desideri:MGDA,Fliege2000,zerb:meta:2014} require efficient gradient computations and are often applied in the context of adjoint approaches and using weighted sum scalarizations of the objective functions. See \cite{pull:comp:2003} 
for a comparison.

In this paper, we suggest a biobjective PDE constrained shape optimization problem for the simultaneous optimization of the mechanical integrity and the cost of a ceramic component. Section~\ref{sec:Biobjective} is devoted to a formal introduction of the problem, including a review of Weibull type models for the probability of failure and existence results for Pareto optimal shapes. 
The numerical implementation is based on a first discretize then optimize approach using Lagrangian finite elements, which is detailed in Section~\ref{sec:Implementation}. Section~\ref{sec:Implementation} also contains an introduction to gradient-based optimization strategies for biobjective problems and some details on their efficient implementation. The approach is validated  at 2D ceramic components in Section~\ref{sec:CaseStudies}, and  perspectives for future research are suggested in Section~\ref{Sec:Outlook}.

\section{Biobjective Shape Optimization (of Ceramic Structures)}
\label{sec:Biobjective}

is not available in this case. This motivates the formulation of a biobjective shape optimization problem where mechanical integrity and volume (cost) are considered simultaneously as equitable objectives. 
%

In this section, we first introduce a set of admissible (feasible) shapes and review the state equations that model the physical behavior of a shape under external forces according to the linear elasticity theory (Section~\ref{subsec:state_equation}). The considered objective functions, the intensity measure modelling the mechanical integrity, and the volume of the shape, are formally introduced in Sections~\ref{subsec:PoF} and \ref{subsec:volume}, respectively. The overall problem is formulated as a biobjective optimization problem in Section~\ref{subsec:bioptalg}, and 
the existence of Pareto optimal solutions is shown in Section~\ref{subsec:exist}.

\subsection{Admissible Shapes and State Equation}\label{subsec:state_equation}
We follow the description from \cite{PaperHahn,bolten} and consider a compact body (also referred to as component or shape) $\Omega\subset \R^p$, $p=2,3$ 
with Lipschitz boundary that is filled with ceramic material. 
Furthermore, we assume that the boundary $\partial\Omega$ of $\Omega$ is subdivided into three parts with nonempty relative interior, 
\begin{align*}
	\partial\Omega = \text{cl}({\partial\Omega}_D) \cup \text{cl}({\partial\Omega}_{N_{\text{fixed}}}) \cup \text{cl}({\partial\Omega}_{N_{\text{free}}}).
\end{align*}
$\partial\Omega_D$ describes the part of the boundary where the Dirichlet boundary condition holds, $\partial\Omega_{N_{\text{fixed}}}$ the part where surface forces may act on and $\partial\Omega_{N_{\text{free}}}$ the part of the boundary that can be modified in an optimization approach. It is assumed to be force free for technical reasons \cite{bolten}. 

\begin{figure}[htb]
  \centering
  \begin{tikzpicture}[font={\footnotesize}]
    \coordinate (bb_ll) at (0,0.4);
    \coordinate (bb_lr) at (5,0.4);
    \coordinate (bb_tl) at (0,3.4);
    \coordinate (bb_tr) at (5,3.4);
    \coordinate (bar_ll) at (0,1.);
    \coordinate (bar_lr) at (5,1.);
    \coordinate (bar_tl) at (0,1.8);
    \coordinate (bar_tr) at (5,1.8);
    \fill [fill=black!15!white] (bb_ll) rectangle (bb_tr);
    \draw [thick,name path=A] (bar_ll) to[out=0,in=180] (0.5,1) to[out=0,in=180] (2.5,1.8) to[out=0,in=180] (4.5,1) to[out=0,in=180] (bar_lr);
    \draw [thick,name path=B] (bar_tl) to[out=0,in=180] (0.5,1.8) to[out=0,in=180] (2.5,2.6) to[out=0,in=180] (4.5,1.8) to[out=0,in=180] (bar_tr);
    \tikzfillbetween[of=A and B] {black!35!blue, opacity=0.5};
    \node[label=left:\(\widehat{\Omega}\)] at (4.5,3.1) {};
    \node[label=left:\(\Omega\)] at (1.15,1.25) {};
    \node[label=right:\(\partial\Omega_{N_{\text{fixed}}}\)] at (4.8,1.5) {};
    \node[label=right:\(\partial\Omega_{N_{\text{free}}}\)] at (2.5,1.2) {};
    \node[label=left:\(\partial\Omega_{D}\)] at (0.2,1.5) {};
    \draw [thick] (bar_ll) -- (bar_tl) ;
    \draw [thick] (bar_lr) -- (bar_tr) ;
    \draw [-latex] (1.4,2.13) -- (1.1,2.65) node [label={[xshift=8pt,yshift=-12pt]\(\hat{n}\)}] {} ;
  \end{tikzpicture}
  \caption{Illustration of \(\Omega\) and its boundary components.\label{fig:adm_shape}}
\end{figure}
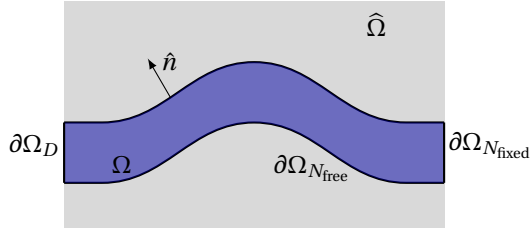

Since all feasible shapes have to coincide in $\Omega_D$ and in $\Omega_{N_{\text{fixed}}}$, it is natural to restrict the analysis to subsets of a sufficiently large bounded open set $\widehat{\Omega}\subset\R^p$ that satisfies $\partial\Omega_D\subseteq\partial\widehat{\Omega}$ and  
$\partial\Omega_{N_{\text{fixed}}}\subseteq\partial\widehat{\Omega}$ (see Figure~\ref{fig:adm_shape}). We additionally assume that $\widehat{\Omega}$ satisfies the \emph{cone property} for a given angle $\theta\in(0,\pi/2)$ and radii $r,l>0$, $r\leq l/2$, i.e., 
\begin{equation*}
\forall x\in\partial\widehat{\Omega}\; \exists \zeta_x\in\R^p,\|\zeta_x\|=1\;:\; 
y+C(\zeta_x,\theta,l) \subset\widehat{\Omega}\; \forall y\in B(x,r)\cap\widehat{\Omega}, 
\end{equation*}
where $C(\zeta_x,\theta,l):=\{c\in\R^p\,:\, \|c\|<l,\,c^{\top} \zeta_x>\|c\|\cos(\theta)\}$ is a truncated circular cone oriented along $\zeta_x$ with height $l$ and opening angle $2\theta$, and $B(x,r)\subset\R^p$ is an open ball of radius $r$ centered at $x$. 
Now the set of \emph{admissible shapes} $\Oad\subset {\mathcal P}(\R^p)$ can be defined as 
\begin{equation}\label{eq:Oad}
    \Oad := \left\{\Omega\subseteq\widehat{\Omega} \colon \partial\Omega_D\subseteq\partial\Omega,\, \partial\Omega_{N_{\text{fixed}}}\subseteq\partial\Omega,\; \widehat{\Omega} \text{ and } \Omega \text{ satisfy the cone property} \right\} .
\end{equation}


 Ceramic components behave according to the linear elasticity theory \cite{munz}. The state equation can be described as an elliptic partial differential equation, see, e.g., \cite{braess}. 
More precisely, we get the state equation which describes the reaction of the ceramic component to external forces as a partial differential equation:
\begin{equation} \label{stateequation}
\begin{array}{rcll}
  -\text{div}(\sigma(u(x))) & = & \tilde{f}(x) &  \text{for} \; x\in\Omega \\
  u(x) & = & 0 &  \text{for} \; x\in\partial\Omega_D    \\
 \sigma(u(x))\hat{n}(x) & = & \tilde{g}(x) &  \text{for} \; x\in\partial\Omega_{N_{\text{fixed}}} \\
  \sigma(u(x))\hat{n}(x) & = & 0 & \text{for} \; x\in\partial\Omega_{N_{\text{free}}} 
\end{array}
\end{equation} 
Here, $\hat{n}(x)$ is the outward pointing normal at $x\in\partial\Omega$, which is defined almost everywhere on $\partial \Omega$ given that $\partial \Omega$ is piecewise differentiable. Furthermore, let $\tilde{f}\in L^2(\Omega , \R^p)$ be the volume forces and $\tilde{g} \in L^2(\partial\Omega_{N_{\text{fixed}}} , \R^p)$ the forces acting on the surface $\partial \Omega_{N_{\text{fixed}}}$, e.g.,  
the tensile load. The displacement caused by the acting forces is given by $u
\in H^1(\Omega, \R^p)$, where $H^1(\Omega, \R^p)$ is the Sobolov space of 
$L^2(\Omega , \R^p)$-functions with weak derivatives in $L^2(\Omega , \R^{p\times p})$. The linear strain tensor $\varepsilon\in L^2(\Omega, \R^{p\times p})$ is given by $\varepsilon(u(x)):=\frac{1}{2}(D u(x) + (D u(x))^{\top})$,  where $D u$ is the Jacobi matrix of $u$. It follows for the stress tensor $\sigma\in L^2(\Omega, \R^{p\times p})$ that $\sigma(u(x))=\lambda\, \text{tr}(\varepsilon(u(x)))I+ 2\mu\varepsilon(u(x))$, where $\lambda, \mu >0$ are the Lam\'e constants derived from Young's modulus $E$ and Poisson´s ratio $\nu$ as $\lambda=\frac{\nu E}{(1+\nu)(1-2\nu)}$ and $\mu=\frac{E}{2(1+\nu)}$. 

From a numerical perspective, a variational formulation  of the state equation \eqref{stateequation} is usually preferred, see, e.g., \cite{PaperHahn,bolten}. This still guarantees a unique weak solution $u$, see \cite{duran}. 
Thus, $u$ is uniquely defined by the shape $\Omega$ \cite{duran}, and we will equivalently write $\sigma(Du(x)):=\sigma(u(x))$ for $x\in\Omega$ to highlight that $\sigma$ depends on the Jacobi matrix of $u$.

 
\subsection{Probability of Failure}\label{subsec:PoF}
The primary objective function, the mechanical integrity of the ceramic component, is modelled based on the probability of failure analogous to \cite{PaperHahn,bolten,bruecknerfoit,weibull}.  For the sake of completeness this is briefly summarized in the following.

We want to optimize the reliability of a ceramic body $\Omega$, i.e., its survival probability, by minimizing its probability of failure under tensile load. 
In that sense failure means that the ceramic body breaks under the tensile load due to cracks. Such cracks occur as a result of small faults in the material caused by the sintering process. 
To understand the mechanics of cracks, 
three types of crack opening are considered, see \cite{gross} and Figure~\ref{fig:modes} for an illustration. They are referred to as \emph{Modes I, II} and \emph{III}, respectively, and relate to different loads. Note that in the two-dimensional case,  only Modes I and II can occur. 
We refer to \cite{gross} for a detailed introduction into this topic.

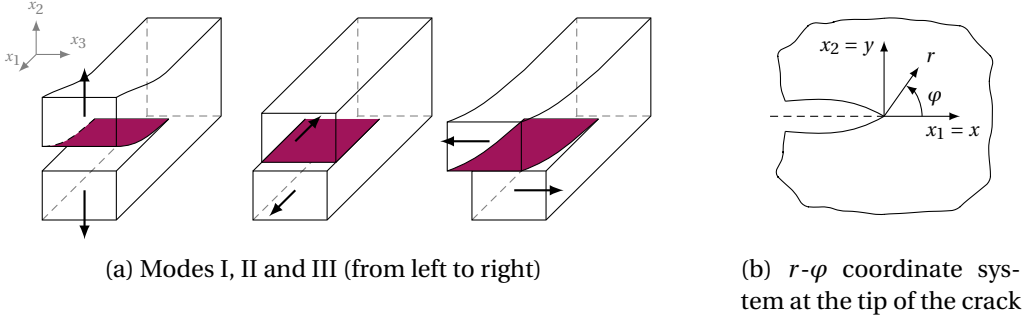
\begin{figure}[htb]
  \definecolor{fc}{RGB}{160,20,90}
  \centering
	\subfloat[Modes I, II and III (from left to right)\label{fig:modes}]{
\begin{tikzpicture}[%
x={(0.65cm,0cm)},
y={(0cm,0.65cm)},
z={({0.5*cos(45)},{0.5*sin(45)})}]
  \def\b{1.5}
  \def\h{1}
  \def\hh{2}
  \def\t{3}
  \def\tt{6}
  \def\lift{0.5}
\coordinate (A) at (0,0,0); 
\coordinate (B) at (\b,0,0) ;
\coordinate (C) at (\b,\h,0); 
\coordinate (D) at (0,\h,0); 
\coordinate (E) at (0,0,\tt); 
\coordinate (F) at (\b,0,\tt); 
\coordinate (G) at (\b,\h,\t); 
\coordinate (H) at (0,\h,\t);
\coordinate (A') at (0,\h+\lift,0); 
\coordinate (B') at (\b,\h+\lift,0) ;
\coordinate (C') at (\b,\hh+\lift,0); 
\coordinate (D') at (0,\hh+\lift,0);  
\coordinate (G') at (\b,\hh,\tt); 
\coordinate (H') at (0,\hh,\tt);
\coordinate (D'H') at (0,\hh,\t);
\coordinate (C'G') at (\b,\hh,\t);
\coordinate (anker1) at (0.5*\b,0.5*\h,0.1*\t);
\coordinate (anker2) at (0.5*\b,1.5*\h+\lift,0.1*\t);
\draw[gray,-latex] (-0.3,3.2,0.5) -- (-0.3,3.2,-0.5) node [label={[xshift=-2pt, yshift=-5pt]\tiny\(x_1\)}]{};
\draw[gray,-latex] (-0.3,3.2,0.5) -- (-0.3,3.9,0.5) node [label={[xshift=0pt, yshift=-5pt]\tiny\(x_2\)}]{};
\draw[gray,-latex] (-0.3,3.2,0.5) -- (0.4,3.2,0.5) node [label={[xshift=3pt, yshift=-5pt]\tiny\(x_3\)}]{};
\draw[densely dashed,gray] (A) -- (E) -- (F);
\draw[densely dashed,gray] (E) -- (H');
\draw (D) -- (H) -- (G);
\fill[fc] (A') -- (B') to[out=0,in=-140] (G) -- (H) to[in=0,out=-140] cycle;
\draw[] (G) -- (C) -- (D) -- (A) -- (B) -- (F) -- (G') -- (H');
\draw[] (B) -- (C);
\draw[] (A') -- (B') -- (C') -- (D') -- cycle;
\draw[] (B') to[out=0,in=-140] (G);
\draw[densely dashed] (A') to[out=0,in=-140] (H);
\draw[] (D') to[out=30,in=-135] (D'H') -- (H');
\draw[] (C') to[out=30,in=-135] (C'G') -- (G');
\draw[thick,-latex] (anker1) -- ++(0,-2*\lift,0);
\draw[thick,-latex] (anker2) -- ++(0,2*\lift,0);
\useasboundingbox (-0.5,-0.5) rectangle (3.5,4);
\end{tikzpicture}
\begin{tikzpicture}[%
x={(0.65cm,0cm)},
y={(0cm,0.65cm)},
z={({0.5*cos(45)},{0.5*sin(45)})},
]
  \def\b{1.5}
  \def\h{1}
  \def\hh{2}
  \def\t{3}
  \def\tt{6}
  \def\push{0.5}
\coordinate (A) at (0,0,0); 
\coordinate (B) at (\b,0,0) ;
\coordinate (C) at (\b,\h,0); 
\coordinate (D) at (0,\h,0); 
\coordinate (E) at (0,0,\tt); 
\coordinate (F) at (\b,0,\tt); 
\coordinate (G) at (\b,\h,\t); 
\coordinate (H) at (0,\h,\t);
\coordinate (A') at (0,\h,0+\push); 
\coordinate (B') at (\b,\h,0+\push) ;
\coordinate (C') at (\b,\hh,0+\push); 
\coordinate (D') at (0,\hh,0+\push);  
\coordinate (G') at (\b,\hh,\tt); 
\coordinate (H') at (0,\hh,\tt);
\coordinate (anker1) at (0.5*\b,0.5*\h,0.1*\t);
\coordinate (anker2) at (0.5*\b,1.5*\h,0.1*\t);
\draw[densely dashed,gray] (A) -- (E) -- (F);
\draw[densely dashed,gray] (E) -- (H');
\draw (D) -- (H);
\draw[] (G) -- (C) -- (D) -- (A) -- (B) -- (F) -- (G') -- (H');
\draw[] (B) -- (C);
\fill[fc] (A') -- (B') -- (G) -- (H) -- cycle;
\draw[] (A') -- (B') -- (C') -- (D') -- cycle;
\draw[] (B') -- (G);
\draw[] (A') -- (H);
\draw[] (D') -- (H');
\draw[] (C') -- (G');
\draw[thick,-latex] (anker1) -- ++(0,0,-3*\push);
\draw[thick,-latex] (anker2) -- ++(0,0,3*\push);
\useasboundingbox (-0.5,-0.5) rectangle (3.5,4);
\end{tikzpicture}
\begin{tikzpicture}[%
x={(0.65cm,0cm)},
y={(0cm,0.65cm)},
z={({0.5*cos(45)},{0.5*sin(45)})}]
  \def\b{1.5}
  \def\h{1}
  \def\hh{2}
  \def\t{3}
  \def\tt{6}
  \def\shift{0.5}
\coordinate (A) at (0,0,0); 
\coordinate (B) at (\b,0,0) ;
\coordinate (C) at (\b,\h,0); 
\coordinate (D) at (0,\h,0); 
\coordinate (E) at (0,0,\tt); 
\coordinate (F) at (\b,0,\tt); 
\coordinate (G) at (\b,\h,\t); 
\coordinate (H) at (0,\h,\t);
\coordinate (A') at (0-\shift,\h,0); 
\coordinate (B') at (\b-\shift,\h,0) ;
\coordinate (C') at (\b-\shift,\hh,0); 
\coordinate (D') at (0-\shift,\hh,0);  
\coordinate (G') at (\b,\hh,\tt); 
\coordinate (H') at (0,\hh,\tt);
\coordinate (D'H') at (0,\hh,\t);
\coordinate (C'G') at (\b,\hh,\t);
\coordinate (anker1) at (0.5*\b,0.5*\h,0.1*\t);
\coordinate (anker2) at (0.5*\b-\shift,1.5*\h,0.1*\t);
\draw[densely dashed,gray] (A) -- (E) -- (F);
\draw[densely dashed,gray] (E) -- (H');
\draw[] (G) -- (C) -- (D) -- (A) -- (B) -- (F) -- (G') -- (H');
\draw[] (B) -- (C);
\draw (D) -- (H) -- (G);
\fill[fc] (A') -- (B') to[out=20,in=-140] (G) -- (H) to[in=20,out=-140] cycle;
\draw[] (A') -- (B') -- (C') -- (D') -- cycle;
\draw[] (B') to[out=20,in=-140] (G);
\draw[] (A') to[out=20,in=-140] (H);
\draw[] (D') to[out=20,in=-140] (D'H') -- (H');
\draw[] (C') to[out=20,in=-140] (C'G') -- (G');
\draw[thick,-latex] (anker1) -- ++(2*\shift,0,0);
\draw[thick,-latex] (anker2) -- ++(-2*\shift,0,0);
\useasboundingbox (0,-0.5) rectangle (3.5,4);
\end{tikzpicture}
}
\hspace*{1cm}
\subfloat[$r$-$\varphi$ coordinate system at the tip of the crack\label{fig:process} ]{
\begin{tikzpicture}[scale=1,decoration={random steps,segment length=5pt,amplitude=1pt}]
  \coordinate (O1) at (-1.3,0.2);
  \coordinate (O2) at (-1.3,-0.2);
  
  \draw (O1) to[out=5,in=150] (0,0);
  \draw (O2) to[out=-5,in=210] (0,0);
  \draw[densely dashed] (-1.5,0) -- (0,0);
  \draw[-latex] (0,0) -- (1,0) node[pos=0.9,below] {\scriptsize\(x_1=x\)};
  \draw[-latex] (0,0) -- (0,1) node[pos=0.9,left] {\scriptsize\(x_2=y\)};
  \draw[-latex] (0,0) -- (0.4589,0.6553) node[pos=0.9,above right] {\scriptsize\(r\)};
  \draw[-latex] (0.5,0) arc (0:55:0.5) node[midway,right] {\scriptsize\(\varphi\)};
  \draw[decorate,rounded corners=2] (O1) -- (-1.4,0.8) -- (-1,1.2) -- (0,1.3) -- (1,1.2) -- (1.4,0.8) -- (1.4,-0.8) -- (1,-1.2) -- (0,-1.3) -- (-1,-1.2) -- (-1.4,-0.8) -- (O2);
  \useasboundingbox (-1.8,-1.7) rectangle (1.8,1.5);
\end{tikzpicture}}
\caption{Crack opening modes and two-dimensional model for the crack-tip field according to \cite{PaperHahn,bolten,gross}}
\end{figure}

The stresses and strains close to a crack are represented by the \emph{crack-tip field} which depends on the respective crack opening modes.  It is described locally by a two-dimensional model, see Figure~\ref{fig:process} for an illustration.  
With $K_{\text{I}}, K_{\text{II}}$ and $K_{\text{III}}$ being the \emph{stress-intensity factors} (also called \emph{$K$-factors}) corresponding to Modes I, II, and III, respectively, one can describe  the crack-tip field $\sigma$ locally according to linear fracture mechanics as 
\begin{align}
\sigma(x)=\sigma(r,\phi) 
= \frac{1}{\sqrt{2\pi r}} \Bigl\{K_{\text{I}}\tilde{\sigma}^\text{I}(\phi )+K_{\text{II}}\tilde{\sigma}^{\text{II}}(\phi )+K_{\text{III}}\tilde{\sigma}^{\text{III}}(\phi ) \Bigr\} + R(r,\phi).
\label{sigma_K_formula}
\end{align}
Here, $r$ is the distance to the crack tip, 
and $\phi$ the angle w.r.t.\ the $x_1$-axis (aligned with the crack plane), see Figure~\ref{fig:process}. The functions $\tilde{\sigma}^{\text{I,II,III}}(\phi)$ are known functions of the angle $\phi$, see again \cite{gross}, and $R(r,\phi)$ is a regular function of the considered position in $x\in\Omega$ that is independent of the crack. 
Note that in the two-dimensional case, Mode III is omitted from \eqref{sigma_K_formula} since it does not exist. Moreover, experimental evidence has shown that Mode~I, which relates to tensile and compressive load, is the most relevant for the failure of ceramic structures \cite{bruecknerfoit}, see \cite{gross} for approaches for multi-mode failure. We will thus focus on $K_{\text{I}}$ in the following as the driving parameter for crack development under tensile load. 

In order to evaluate $K_{\text{I}}$ analogous to \cite{bolten}, 
we adopt the concept of equivalent circular discs to represent different crack shapes and crack sizes, and hence assume that the cracks are \emph{penny shaped}. Then a particular crack can be identified by its configuration 
\begin{align*}
(x,a,n )\in \mathcal{C}:= {\Omega}\times (0,\infty )\times S^{p-1} ,
\end{align*}
where $x \in {\Omega}$ is its location, $a \in (0, \infty)$ its radius, and $n \in S^{p-1}$ its orientation ($S^{p-1}$ denotes the unit sphere in $\R^p$). $\mathcal{C}$ is called the \emph{crack configuration space}.
Given a crack $(x,a,n)\in\mathcal{C}$, $K_{\text{I}}$ can be computed as a function of the radius $a$ and of the tensile load  $\sigma_n(Du(x))$ in the normal direction $n$ of the stress plane at the crack location $x$ as
\begin{align}
K_{\text{I}} = K_{\text{I}}(a,\sigma_n(Du(x)))= \frac{2}{\pi}\sigma_n(Du(x))\sqrt{\pi a},
\label{Ki}
\end{align}
see, e.g., Table~4.1 in \cite{gross}.  
Following \cite{bolten} we set 
\begin{align*}
\sigma_n(Du(x)):=
\max \{ n^{\top}  \,\sigma (Du(x))\, n\, ,\, 0 \}.
\end{align*}
Note that negative values of $\sigma_n(x)$ correspond to compressive loads which can be ignored in the analysis of crack development, see Figure~\ref{fig:modes} above.

A crack $(x,a,n)\in\mathcal{C}$ becomes critical, i.e., a fracture occurs and the material fails, if $K_{\text{I}}$ exceeds a material-specific critical value $K_{\text{I} c}$ (the \emph{ultimate tensile strength} of the material).  
Note that \eqref{Ki} implies that all cracks with radius 
\begin{align}
a > a_c := \frac{\pi}{4}\left(\frac{K_{\text{I} c}}{\sigma_n(Du(x))}\right)^2 
\label{eq:ac}
\end{align}
are critical.
We denote the set of \emph{critical configurations} by  $$A_c:=A_c(\Omega,Du)=\{(x,a,n)\in\mathcal{C}\; :\; K_{\text{I}}(a,\sigma_n(Du(x)))>K_{\text{I} c}\}$$ 
and want to minimize the probability of finding a crack with configuration in  $A_c$. 

Following \cite{PaperHahn,bolten}, 
we assume that the parameters $(x,a,n)$ are random (i.e., they are not deterministically given by the sintering process), that the cracks are statistically homogeneously distributed in $\Omega$, and that their orientations are isotropic.
Let $A\subseteq\mathcal{C}$ be a measurable subset of the configuration space. Then under quite general assumtions the random number $N(A)$ of cracks in $A$ is Poisson distributed (see 
\cite{kallenberg,watanabe}), and hence $N(A)$ is a Poisson point process.
It follows that $P(N(A)=k)=e^{-\upsilon (A)}\frac{\upsilon (A)^k}{k!}\sim Po(\upsilon (A))$, 
where $\upsilon$ 
is the (Radon) \emph{intensity measure} of the process. 
Recall that a component fails if $N(A_c)>0$. 
Given a displacement field $u \in H^1(\Omega, \R^p)$, we can now write the survival probability of the component $\Omega$ as
\begin{equation*}
p_s(\Omega|Du)=P(N(A_c(\Omega ,Du))=0)=\exp\{-\upsilon (A_c(\Omega, Du))\}.
\end{equation*}
Hence, to maximize the survival probability of a component $\Omega$ we need to minimize the intensity measure $\upsilon$. 
Since only cracks $(x,a,n)$ with radius $a>a_c$ need to be considered (c.f.\ 
\eqref{eq:ac} above), \cite{PaperHahn,bolten} determine the intensity measure as
\begin{align*}
\upsilon (A_c(\Omega, Du))=\frac{\Gamma(\frac{p}{2})}{2\pi^{\frac{p}{2}}}\int\limits_{\Omega}\int\limits_{S^{p-1}}\int\limits_{a_c}^{\infty}\text{d}\upsilon_a(a)\, \text{d}n\, \text{d}x
\end{align*}
with $\text{d}x$ the Lebesgue measure on $\mathbb{R}^p$, $\text{d}n$ the surface measure on $S^{p-1}$, and $\text{d}\upsilon_a (a)= c\cdot a^{-\tilde{m}}\text{d}a$ being a positive Radon measure modelling the occurrence of cracks of radius $a$ in $\Omega$ 
($c>0$ and $\tilde{m}\geq\frac{3}{2}$ are positive constants).
Note that for $p=3$ the $\Gamma$-function takes the value $\Gamma(\frac{3}{2})=\frac{\sqrt{\pi}}{2}$ and for $p=2$ we obtain $\Gamma(1)=1$. 
With $m:=2(\tilde{m}-1)\geq 1$ and using again \eqref{eq:ac} 
the inner integral can be evaluated, yielding
\begin{align*}
\upsilon (A_c(\Omega,Du))=\frac{\Gamma(\frac{p}{2})}{2\pi^{\frac{p}{2}}}\int\limits_{\Omega}\int\limits_{S^{p-1}}\left(\frac{\sigma_n(Du(x))}{\sigma_0}\right)^m \text{d}n\,\text{d}x,
\end{align*}
where $\sigma_0$ is an appropriately chosen  positive constant. As highlighted in \cite{PaperHahn,bolten}, this is in accordance with the statistical model introduced by Weibull \cite{weibull}. In this context, the parameter $m$ is referred to as \textit{Weibull module} and typically assumes values between $5$ and $25$.

Summarizing the discussion above, we define our primary objective function
$f_1:\Oad\rightarrow\R$ as
\begin{align}\label{eq:obj1}
f_1(\Omega) 
:=\upsilon (A_c(\Omega,Du))
\end{align}
and refer to it as \emph{intensity measure}, modelling the probability of failure (PoF) of the component $\Omega$. 
Recall that $u(\Omega)$ is uniquely defined by $\Omega$ and thus $f_1(\Omega)$ is completely defined by the shape $\Omega$ (given fixed boundary conditions $\tilde{f},\tilde{g}$).

\subsection{Material Consumption}\label{subsec:volume} \label{subsubsec:MaterialConsumption} 
Improving the intensity measure $f_1$ of a ceramic component (and hence its PoF) usually comes at the price of an increased material consumption, which is directly correlated with the cost of the component. In order to avoid excessively expensive solutions, classical approaches thus set a predetermined bound on the allowable volume of the shape $\Omega$ (see, e.g.,  \cite{PaperHahn,bolten}). We follow a more general approach in this manuscript and interpret the volume (and hence the cost) of the component as an equitable second objective function. This facilitates, in particular, the analysis of the trade-off between these two criteria and supports the engineer in finding a preferable design. 
We thus define $f_2:\Oad\rightarrow\R$ as the \emph{volume} of a shape $\Omega\in\Oad$ given by
\begin{equation}\label{obfun_vol}
f_2(\Omega )  := \int\limits_{\Omega}\text{d}x.
\end{equation}

\subsection{Biobjective Optimization} \label{subsec:bioptalg}

When multiple conflicting goals are relevant in an optimization problem, a common approach is to use a weighted sum of the individual objectives as an overall objective function and then resort to classical optimization algorithms. The advantages and also the shortcomings of this so-called \emph{weighted sum scalarization} are discussed in the following section, see also \cite{ehrgott}. Particularly when choosing fixed weights, this method is of limited applicability. While fixed weights may represent the preferences of one decision maker, another decision maker may have other preferences, i.e., other weights. Moreover, the objective ranges and the scales of the objectives may be very different or even incomparable, which generally leads to numerical difficulties. 

Another common approach to handle multiple conflicting goals is to select one ``most important'' objective function to minimize, e.g., the probability of failure, and set upper bounds on the acceptable objective function values of the other objective functions. In our case this would imply a constraint on the allowable material consumption, see, e.g., \cite{PaperHahn,bolten}. This approach is referred to as \emph{$\varepsilon$-constraint scalarization}, see again \cite{ehrgott} for a general discussion of this topic. In addition to the numerical difficulties that may arise from adding potentially complicating constraints to the problem formulation, this approach has similar drawbacks as the weighted sum scalarization: The selection of meaningful upper bound values may be difficult, and trade-off information is ignored.

A more general approach is to formulate a multiobjective optimization problem, and hence to compute a set of relevant solution alternatives rather than one  single ``optimal'' solution. 
By providing a set of solution alternatives the decision maker can not only choose a solution that aligns the most with his preferences, but he can also inspect the trade-off between alternative solutions and can adjust his preferences accordingly. A decision maker may, for example, prefer reliability over volume, but looking into the trade-off between solution alternatives there may be a solution that is some small percentage worse w.r.t.\ the reliability while it is a lot better regarding the volume. This may lead to a re-evaluation of the decision maker's preferences.

With our two objective functions ``intensity measure'' ($f_1$, modeling the PoF) and ``volume'' ($f_2$), the following \emph{biobjective shape optimization problem} arises:
\begin{equation}
\begin{split}
\min_{\Omega\in\Oad} & ~f(\Omega):=(f_1(\Omega),f_2(\Omega))\\
\text{s.t. } & u \in H^1(\Omega, \R^p) \text{ solves the state equation } (\ref{stateequation}),
\end{split}\label{ceramicMOP}
\end{equation}
where $f_1$ and $f_2$ are defined according to Sections~\ref{subsec:PoF} and \ref{subsubsec:MaterialConsumption} above.  Note that only $f_1$ depends on the displacement field $u(\Omega)$.

We call $f=(f_1,f_2):{\Oad}\longrightarrow\R^2$ the \textit{biobjective function vector} and  $\R^2$  the \textit{objective space}.  
Let $Z:=f(\Oad)\subset\R^2$ denote the set of all \emph{feasible outcome vectors} in the objective space, i.e., the set of all outcome vectors that are images of admissible shapes $\Omega\in{\Oad}$. In contrast to single objective optimization we have to define optimality in the presence of two objectives, since there is no natural order on $\R^2$. For two shapes $\Omega_1,\Omega_2 \in {\Oad}$, let $z^1=f(\Omega_1)$ and $z^2=f(\Omega_2)$
be the respective outcome vectors in $Z$. We write
$$\begin{array}{ccl}
z^1 \leqq z^2 & \iff & z^1_j\leq z^2_j,\; j=1,2\\[0.2cm]
z^1 \leqslant z^2 & \iff & z^1\leqq z^2 \text{ and } z^1\neq z^2\\[0.2cm]
z^1<z^2 & \iff & z^1_j < z^2_j,\; j=1,2.
\end{array}$$
Note that $z^1\leqslant z^2$ implies that $z^1_j\leq z^2_j$ for $j=1,2$ with at least one strict inequality. 
We use the notation 
$$\R^2_{\geqslant}:=\{z\in\R^2\,:\, z\geqslant (0,0)^{\top}\} \quad \text{and} \quad \bar{z}+\R^2_{\geqslant}:=\{z\in\R^2\,:\, z\geqslant \bar{z}\} \quad\text{for~} \bar{z}\in\R^2.$$ 
The notations $\R^2_{\geqq}$, $\R^2_{>}$, $\R^2_{\leqslant}$, $\R^2_{\leqq}$ and $\R^2_{<}$ are used accordingly.

We say that $z^1$ \emph{dominates} $z^2$ if and only if $z^1 \leqslant z^2$, i.e., if and only if $z^1\in z^2+\R^2_{\leqslant}$.  An outcome vector $\bar{z}\in Z$ is called \emph{nondominated} if there is no other outcome vector $z\in Z$ such that $z\leqslant\bar{z}$.  
Accordingly, an admissible shape ${\Omega}_P\in {\Oad}$ is called \emph{Pareto optimal} or \emph{efficient}, if there is no other admissible  shape $\Omega\in {\Oad}$ such that $f(\Omega)\leqslant f({\Omega}_P)$. 
We are mainly interested in Pareto optimal shapes since these are precisely those shapes that can not be improved in one objective without deterioration in the other objective. The set of all Pareto optimal shapes is called the \emph{Pareto front} and denoted by $\Oad_P$. Similarly, the set of all nondominated outcome vectors $Z_N:=f(\Oad_P)$ is referred to as the \emph{nondominated front} in the objective space.

As in single-objective optimization, one often has to resort to local minima if the underlying optimization problem is nonconvex (and difficult). In the biobjective setting, an admissible shape ${\Omega}_{\ell P}\in {\Oad}$ is called \emph{locally Pareto optimal} or \emph{locally efficient}, if there is a neighborhood $\mathcal{N}\subseteq\Oad$ of $\Omega_{\ell P}$ such that there is no other admissible shape $\Omega\in\mathcal{N}$ with  $f(\Omega)\leqslant f({\Omega}_{\ell P})$.

\subsection{Existence of Pareto Optimal Shapes}\label{subsec:exist}

In order to prove the existence of Pareto optimal shapes, we consider the weighted sum scalarization of problem \eqref{ceramicMOP} in which, given a weight $\omega\in(0,1)$, the two objective functions are combined into one single weighted sum objective:
\begin{equation}
\begin{split}
\min_{\Omega\in\Oad} & f_{\omega}(\Omega) := \omega f_1(\Omega) + (1-\omega) f_2(\Omega)\\
\text{s.t. } &  u \in H^1(\Omega, \R^p) \text{ solves the state equation } (\ref{stateequation}).
\end{split}\label{eq:shapeweightedsum}
\end{equation}
It is a well-known fact that every optimal solution of problem \eqref{eq:shapeweightedsum} is Pareto optimal for problem \eqref{ceramicMOP}, see, e.g., \cite{ehrgott}.

\begin{theorem}
If the crack size measure has the non decreasing stress hazard property (see \cite{bolten} for a formal definition), then the set $\Oad_P$ is non-empty.
\end{theorem}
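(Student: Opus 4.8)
The plan is to invoke the standard existence framework for shape optimization under the cone property (as in \cite{chenais,haslinger}) and combine it with the direct method of the calculus of variations applied to the weighted sum problem \eqref{eq:shapeweightedsum}. First I would fix an arbitrary weight $\omega\in(0,1)$ and consider a minimizing sequence $(\Omega_k)_{k\in\mathbb{N}}\subseteq\Oad$ for $f_\omega$. Since all admissible shapes lie in the fixed bounded set $\widehat{\Omega}$ and satisfy a uniform cone property (with the same $\theta$, $r$, $l$), the associated characteristic functions $\mathbbm{1}_{\Omega_k}$ are uniformly bounded in $BV(\widehat\Omega)$, so a compactness theorem for the cone/uniform-Lipschitz class yields a subsequence (not relabeled) converging in a suitable sense — $\mathbbm{1}_{\Omega_k}\to\mathbbm{1}_{\Omega^*}$ in $L^1(\widehat\Omega)$ and $\Omega_k\to\Omega^*$ in the Hausdorff complementary topology — to a limit shape $\Omega^*$ that again satisfies the cone property; one then checks $\partial\Omega_D\subseteq\partial\Omega^*$ and $\partial\Omega_{N_{\text{fixed}}}\subseteq\partial\Omega^*$ are preserved, so $\Omega^*\in\Oad$.

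Next I would establish continuity of the two objective functionals along this convergence. For $f_2$ this is immediate, since $f_2(\Omega_k)=\int_{\widehat\Omega}\mathbbm{1}_{\Omega_k}\,dx\to\int_{\widehat\Omega}\mathbbm{1}_{\Omega^*}\,dx=f_2(\Omega^*)$ by $L^1$-convergence. For $f_1$ the argument runs through the state equation: convergence of the domains in the cone class implies convergence of the weak solutions $u_k=u(\Omega_k)$ of \eqref{stateequation} to $u(\Omega^*)$ (extending by zero outside each domain and using the uniform $H^1$-bound coming from the energy estimate together with the geometric stability of the Dirichlet/Neumann decomposition), and in particular $Du_k\to Du(\Omega^*)$ strongly enough in $L^2$ that, after passing to a further subsequence, $\sigma_n(Du_k(x))\to\sigma_n(Du(\Omega^*)(x))$ pointwise a.e. Here the \emph{non-decreasing stress hazard property} of the crack size measure is the hypothesis that makes $\upsilon(A_c(\cdot))$ behave well: it guarantees that the integrand $n\mapsto(\sigma_n(Du(x))/\sigma_0)^m$ (or its more general counterpart before the inner integral is evaluated) is monotone and controlled, so that the Weibull-type functional
\begin{equation*}
f_1(\Omega)=\frac{\Gamma(\tfrac p2)}{2\pi^{p/2}}\int_{\Omega}\int_{S^{p-1}}\left(\frac{\sigma_n(Du(x))}{\sigma_0}\right)^m\,dn\,dx
\end{equation*}
is lower semicontinuous (indeed continuous) under the convergence above, via Fatou's lemma for the lower bound and a dominated-convergence/uniform-integrability argument — justified by the $L^2\hookrightarrow$ higher-integrability afforded by elliptic regularity on cone-domains and the growth exponent $m$ — for the matching upper bound.

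Finally, combining continuity of $f_1$ and $f_2$ gives $f_\omega(\Omega^*)\le\liminf_k f_\omega(\Omega_k)=\inf_{\Omega\in\Oad}f_\omega(\Omega)$, so $\Omega^*$ is a minimizer of \eqref{eq:shapeweightedsum}. By the cited elementary fact that every optimal solution of the weighted sum problem with $\omega\in(0,1)$ is Pareto optimal for \eqref{ceramicMOP}, we conclude $\Omega^*\in\Oad_P$, hence $\Oad_P\neq\emptyset$. I expect the main obstacle to be the stability of the state equation under domain perturbation — i.e., showing $u(\Omega_k)\to u(\Omega^*)$ with convergence of gradients strong enough to pass to the limit inside the nonlinear, $m$-th power integrand defining $f_1$; this is exactly where the cone property (uniform Lipschitz regularity, preventing boundary degeneration) and the non-decreasing stress hazard assumption (taming the stress-dependence of the intensity measure) must be used in tandem, and it is the step that cannot be reduced to a routine citation.
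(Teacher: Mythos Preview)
Your overall strategy --- reduce to the weighted sum problem \eqref{eq:shapeweightedsum}, establish existence of a minimizer there, and conclude via the standard implication ``weighted-sum optimal $\Rightarrow$ Pareto optimal'' --- is exactly what the paper does. The difference is in how the existence step is handled. The paper observes that $f_\omega$ can be rewritten as a single shape integral of the form $\int_\Omega\int_{S^{p-1}}\bigl[(\sigma_n(Du)/\sigma_0)^m+\text{const}\bigr]\,dn\,dx$, i.e., the volume term merely shifts the integrand of $f_1$ by a constant; since the existence analysis in \cite{bolten} (via \cite{chenais,fuji}) relies only on convexity of the integrand in $Du$, and adding a constant preserves convexity, existence of a minimizer follows immediately by citation. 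You instead sketch the full direct-method argument from scratch --- compactness in the cone class, stability of the state equation, lower semicontinuity of $f_1$ --- which is effectively re-deriving the content of \cite{bolten}. That is legitimate, just longer.

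One technical caution on your route: your lower-semicontinuity argument for $f_1$ leans on \emph{strong} $L^2$ convergence of $Du_k$ and pointwise a.e.\ convergence of $\sigma_n(Du_k)$, followed by dominated convergence. This is harder than necessary, and the domination step is delicate because the Weibull exponent $m$ may be large (up to $25$ in practice), so you need $\sigma(Du)\in L^m$ uniformly along the sequence, not just $L^2$; the ``elliptic regularity on cone-domains'' you invoke does not automatically deliver this. The argument actually used in \cite{bolten,fuji} bypasses the issue by exploiting \emph{convexity} of the integrand in $Du$: weak $H^1$-convergence of $u_k$ together with convexity yields weak lower semicontinuity of $f_1$ directly, without strong gradient convergence or a.e.\ limits. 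Your invocation of the non-decreasing stress hazard property is in the right place, but its role in \cite{bolten} is precisely to ensure this convexity structure, not to tame a pointwise passage to the limit.
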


\begin{proof}
Suppose that $\omega\in(0,1)$ is chosen arbitrarily, but fixed. Then the weighted sum objective can be evaluated as
\begin{eqnarray*}
f_{\omega}(\Omega)
&\!=\!& \omega\left(\frac{\Gamma(\frac{p}{2})}{2\pi^{\frac{p}{2}}}\int\limits_{\Omega}\int\limits_{S^{p-1}}\left(\frac{\sigma_n(Du(x))}{\sigma_0}\right)^m \text{d}n \, \text{d}x\right) + (1\!-\!\omega)\int\limits_{\Omega}\text{d}x\\
&\!=\!& \omega\,\frac{\Gamma(\frac{p}{2})}{2\pi^{\frac{p}{2}}}\int\limits_{\Omega} \int\limits_{S^{p-1}}\left(\frac{\sigma_n(Du(x))}{\sigma_0}\right)^m \text{d}n \; + \; \underbrace{\frac{2\pi^{\frac{p}{2}}(1\!-\!\omega)}{\Gamma(\frac{p}{2})\omega}}_{\text{constant}} \text{d}x .
\end{eqnarray*}
Thus, the incorporation of $f_2$ into the scalarized objective function corresponds to the addition of a constant term in the shape integral of $f_1$. This does not affect the convergence analysis of \cite{bolten}, which is based on convexity of the integrand in $Du$, see \cite{chenais,fuji}. We can conclude that the weighted sum scalarization has an optimal solution for every $\omega\in(0,1)$. Since every such solution is Pareto optimal for \eqref{ceramicMOP}, the result follows.
\end{proof}

\section{Numerical Implementation}\label{sec:Implementation}
To actually compute locally Pareto optimal shapes, we adopt the finite element discretization implemented in \cite{PaperHahn} for two-dimensional instances (i.e., $p=2$). In this implementation, 
the shapes $\Omega\in\Oad$, the state equation \eqref{stateequation}, the objective functions $f_1$ and $f_2$ and their gradients are discretized. Standard Lagrangian finite elements are used for the discretization of the state equation~\eqref{stateequation},  and all integrals are calculated using numerical quadrature. The discretized shape gradients are obtained by an adjoint approach to reduce computational costs. We refer to \cite{PaperHahn} for a detailed description.

\subsection{Geometry Definition and Finite Element Mesh}
\label{subsec:Geometry}
The two-dimensional shapes $\Omega \in \Oad\subset {\mathcal P}(\R^2)$ are discretized by an $n_x\times n_y$ mesh \(X:=X^{\Omega}=(X^{\Omega}_{ij})_{n_x\times n_y}\) (we write \(X_{ij}:=X_{ij}^{\Omega}\in\R^2\) for short) using tetrahedrons, 
with $n_x,n_y \in \mathbb{N}$ being the number of grid points in $x$ and $y$ direction, respectively. Given a shape $\Omega\in\Oad$ and its discretization $X$, the objective function values $f_1(X)$ and $f_2(X)$ as well as their gradients $\nabla f_1(X)$ and $\nabla f_2(X)$ are computed using the implementation of \cite{PaperHahn}.

For the optimization process, we fix the $x$-component of all grid points to equidistant values  $x_1,\dots,x_{n_x}$, and we only consider the $y$-components of those grid points that define the boundary of the shape to avoid deformation of the inner mesh structure. Note that this reformulation reduces the number of optimization variables from $2 n_x n_y$ to $2 n_x$.
As a consequence, feasible shapes can alternatively be represented by a \emph{shape parameter} $\varrho$ containing, for every relevant $x$-coordinate, the $y$-coordinate of the \emph{meanline} $\varrho^{\text{ml}}_i\in\R$ of the shape, and the \emph{thickness}  $\varrho^{\text{th}}_i \in \R_>$ of the shape, $i=1,\dots,n_x$. 
Given a feasible shape represented by $\varrho:=(\varrho^{\text{ml}},\varrho^{\text{th}})\in\R^{ 2n_x}$ with $\varrho^{\text{th}}\in \R_>^{n_x}$, an  associated mesh representation 
$X$ can be obtained using
\begin{equation}\label{generategrid}
X_{i,j}:=\left(x_i\; ,\; \varrho^{\text{ml}}_i+\frac{\varrho^{\text{th}}_i}{n_{y}-1}\left( j-\frac{n_{y}+1 }{2} \right) \right)\in\R^2,
\qquad i=1,\ldots,n_x, \; j=1,\ldots,n_y.
\end{equation}
%
To further reduce the computational burden and to obtain smoother shapes, the shape parameters
$\varrho^{\text{ml}}\in\R^{n_x}$ and $\varrho^{\text{th}}\in\R^{n_x}_>$ are modelled using \textit{B-splines}.  
Let $n_B \in  \mathbb{N}$, with $n_B < n_x$, be the number of B-spline basis functions, and let $\{\vartheta_j : \R\rightarrow \R_{\geq}, \; j=1, \dots, n_B  \}$ be a B-spline basis (see, e.g., \cite{nurbs}).  
Feasible shapes are then represented by B-spline coefficients $\gamma:=(\gamma^{\text{ml}}, \gamma^{\text{th}}) \in \R^{2 n_B}$.
The corresponding meanline and thickness values can be computed using the auxiliary functions
\begin{equation*}
\hat{\varrho}^{\text{ml}}(x):=\sum_{j=1}^{n_B} \gamma^{\text{ml}}_j \, \vartheta_j(x) \quad \text{and}\quad \hat{\varrho}^{\text{th}}(x):=\sum_{j=1}^{n_B} \gamma^{\text{th}}_j \, \vartheta_j(x), \qquad x\in \R.
\end{equation*}
These auxiliary meanline and thickness functions are then evaluated at the fixed $x$-coordinates of the gridpoints which yields 
\begin{equation}\label{eq:b-sp}
{\varrho}^{\text{ml}}_i:= \hat{\varrho}^{\text{ml}}(x_i) \quad \text{and} \quad 
{\varrho}^{\text{th}}_i:= \hat{\varrho}^{\text{th}}(x_i),\qquad i=1,\dots,n_x.
\end{equation}

Using the B-spline coefficients $\gamma=(\gamma^{\text{ml}}, \gamma^{\text{th}}) \in \R^{2 n_B}$
as optimization variables yields a further reduction of the number of variables to $2 n_B$. Moreover, the B-spline representation leads to an implicit regularization and smoothing of the represented shapes. In the following, we denote the set of \emph{feasible shape parametrizations} by
$\Gamma\subseteq\{(\gamma^{\text{ml}}, \gamma^{\text{th}})\in\R^{2n_B}\}$.

To evaluate the objective functions $f_j(\gamma)$ and their gradients $\nabla f_j(\gamma)={\partial f_j}/{\partial\gamma}$, $j=1,2$, w.r.t.\ the new parametrization of shapes based on B-spline parameters $\gamma$, while still using the implementation of \cite{PaperHahn},   
we compute an associated grid \(X\) using first \eqref{eq:b-sp} and then \eqref{generategrid}. 
While the resulting objective function values can be used immediately in the optimization process, the
gradients computed w.r.t.\ the grid $X$ need to be translated to the space of B-spline coefficients, i.e., 
\begin{equation}\label{eq:L2gradient}
 \frac{\partial f_j}{\partial\gamma^{\text{ml}}}=\frac{\partial f_j}{\partial X}\,\frac{\partial X}{\partial\varrho^{\text{ml}}}\,\frac{\partial\varrho^{\text{ml}}}{\partial\gamma^{\text{ml}}}\quad\text{and}\quad 
 \frac{\partial f_j}{\partial\gamma^{\text{th}}}=\frac{\partial f_j}{\partial X}\,\frac{\partial X}{\partial\varrho^{\text{th}}}\,\frac{\partial\varrho^{\text{th}}}{\partial\gamma^{\text{th}}},\qquad j=1,2.
\end{equation}

The numerical computation of gradients of $f_j$, $j=1,2$, w.r.t.\ a B-spline representation $\gamma$ of a feasible shape $\Omega$ is thus based on a two-step projection of $\gamma$ onto the original grid $X$. The thus computed gradients of $f_1$ (the intensity measure) were validated, using finite differences, at the sample shape shown in Figure~\ref{fig:TC1_Start}. 
The validation is based on a grid $(X_{ij})_{41\times 7}$, i.e., $n_x=41$ and $n_y=7$. Consequently, for the corresponding meanline and thickness representation we have $\varrho=(\varrho^{\text{ml}},\varrho^{\text{th}}) \in \R^{82}$, where $\varrho^{\text{th}} \in \R^{41}_>$.  Moreover, we used a B-spline basis with five basis functions, i.e., $n_B=5$ and $\gamma=(\gamma^{\text{ml}}, \gamma^{\text{th}}) \in \R^{10}$. We computed all ten partial derivatives w.r.t.\ $\gamma$ via the respective transformations to the grid representation and compared them with finite differences. 
The results of this comparison, i.e., the absolute values of the differences between computed derivatives and finite differences, are shown in Figure~\ref{fig:fd_g_m} and \ref{fig:fd_g_t} for the meanline and thickness parameters, respectively.  The figures indicate in all cases that, when the finite differences are evaluated for decreasing values of the increment $\varepsilon$, 
then they correspond well to the computed gradients. 

\begin{figure}[!htb]
	\begin{center}
		\subfloat[Validation of ${\partial f_1}/{\partial\gamma_i^{\text{ml}}}$\label{fig:fd_g_m}]
		    {\includegraphics[width=0.4\textwidth]{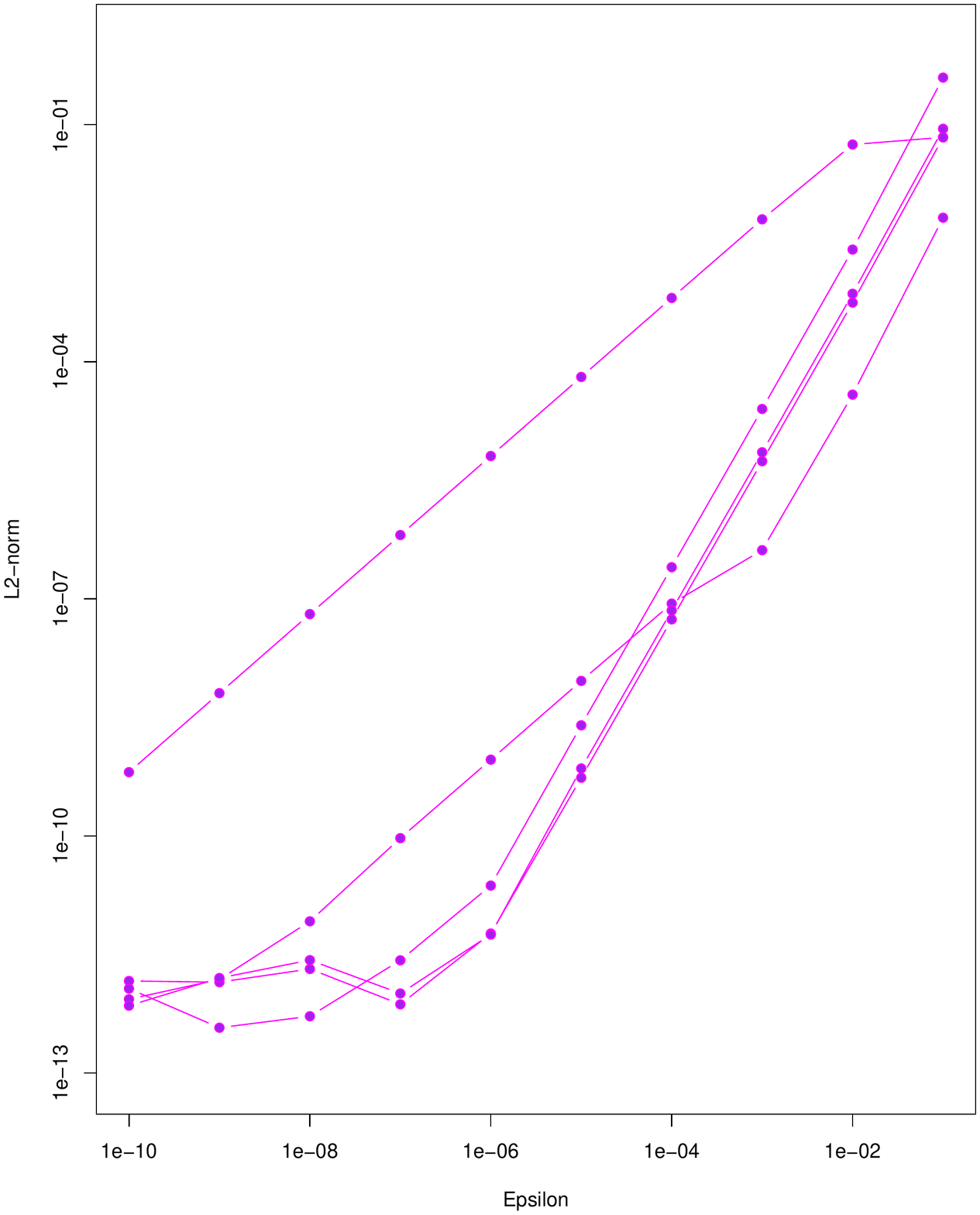}}
		\hspace{0.5cm }
		\subfloat[Validation of ${\partial f_1}/{\partial\gamma_i^{\text{th}}}$\label{fig:fd_g_t}]
		    {\includegraphics[width=0.4\textwidth]{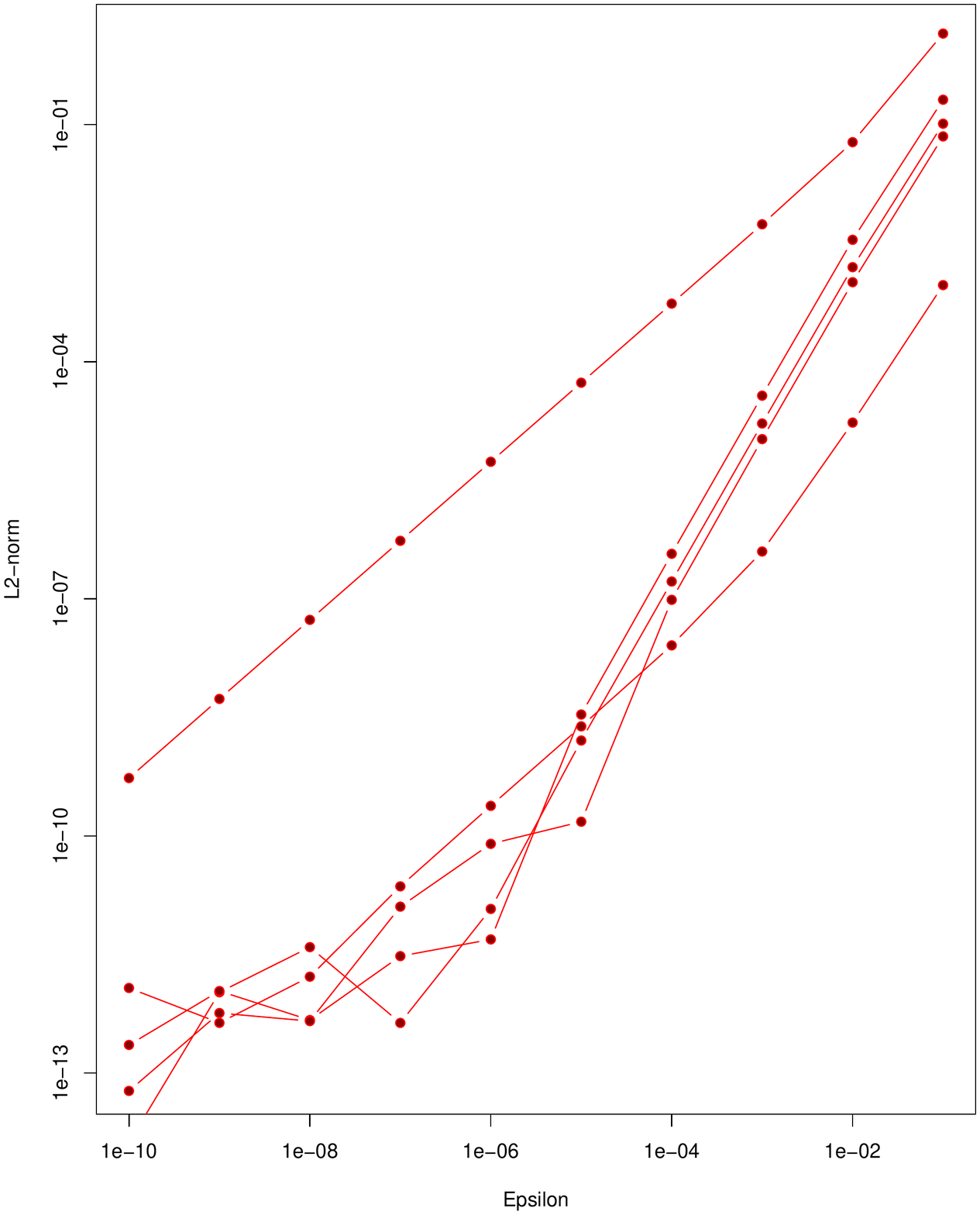}}
		\caption{Validation of gradients computed according to \eqref{eq:L2gradient} using finite differences. On the $x$-axis: increment $\varepsilon$ used for the finite difference evaluation; on the $y$-axis: absolute deviation between ${\partial f_1}/{\partial\gamma_i^{\text{ml},\text{th}}}$ computed according to \eqref{eq:L2gradient} and the corresponding finite difference, $i=1,\dots,5$, for meanline (left) and thickness (right).  \label{fig:findiff}}
	\end{center}
\end{figure}

\subsection{Pareto Critical Solutions}\label{subsec:Paretocritical}
Given the parametrization of admissible shapes described in Section~\ref{subsec:Geometry}, the biobjective optimization problem \eqref{ceramicMOP} can now be restated as
\begin{equation}
\begin{split}
\min_{\gamma\in\Gamma} & (f_1(\gamma),f_2(\gamma))\\
\text{s.t. } & u(X(\gamma)) \text{ solves the discretized state equation } (\ref{stateequation}).
\end{split}\label{discretizedMOP}
\end{equation}
Recall that only $f_1$ depends on the displacement field $u(X)$. 

Since derivative information is available, necessary optimality conditions can be formulated that generalize the concept of critical points from single-objective optimization.  Towards this end, we omit the constraints implied by the parametric representation of admissible shapes to keep the exposition simple. All constraints will be handled implicitly in the numerical tests described in Section~\ref{sec:CaseStudies} below. 
Assuming that both objective functions are continuously differentiable a necessary condition for a solution $\gamma \in \R^{2n_B}$ to be locally Pareto optimal is that
\begin{align}
\Bigl\{d \in  \mathbb{R}^{2n_B}\; : \; \nabla f_j(\gamma)^\top\,d < 0,\; j=1,2 \Bigr \}= \varnothing,
\label{Pareto_crit}
\end{align}
i.e., there does not exist a direction $d \in \mathbb{R}^{2n_B}$ that is a descent direction for both objectives. If $\gamma^*\in \R^{2n_B}$ satisfies this condition we call it a \textit{Pareto critical shape}. 

In this work, we aim at the efficient computation of Pareto critical shapes that, ideally,  approximate the Pareto front. 
Since derivative information can be obtained for both objective functions, we select solution methods that efficiently utilize this information and that can be adopted such that a meaningful representation of a Pareto critical front is obtained. As two fundamental approaches in this category, a parametrized  weighted sum method and a biobjective descent algorithm are chosen and explained in Sections~\ref{subsec:weightedsum} and \ref{subsec:descent}, respectively. 
Their performance  in the context of 2D shape optimization problems is compared in Section~\ref{sec:CaseStudies}.

\subsection{Weighted Sum Method}\label{subsec:weightedsum}

Maybe the easiest way to compute a representation of the Pareto front is to iteratively solve weighted sum scalarizations \eqref{eq:shapeweightedsum} with varying weights. The weighted sum scalarization of problem \eqref{discretizedMOP} can be restated as
\begin{equation} \label{eq:weightedsum}
\begin{split}
\min_{\gamma\in\Gamma}\;\; & f_w(\gamma) := \omega f_1(\gamma) + (1-\omega) f_2(\gamma)\\
\text{s.t. } & u(X(\gamma)) \text{ solves the discretized state equation } (\ref{stateequation}),
\end{split}
\end{equation}
where $\omega\in(0,1)$ is the \emph{weight} specifying the relative importance of $f_1$ and $f_2$, respectively. 
Recall that every solution of the weighted sum scalarization \eqref{eq:weightedsum} is Pareto optimal for \eqref{ceramicMOP} \cite{ehrgott}. A disadvantage of the weighted sum method is, however, that only solutions that map to the convex hull $\operatorname{conv}(Z)$ of the image set $Z=f(\Gamma)$ in the objective space can be found, and thus relevant compromise solutions in nonconvex areas of the nondominated front may be missed. Moreover, \cite{dasdennis} showed at simple biobjective test instances that evenly distributed weights do in general not lead to well distributed outcome vectors in the objective space.  
This is particularly problematic if the considered objective function values are of largely different magnitude, which is the case here. In order to obtain solutions that are consistent with the preferences expressed by $\omega$, we thus normalize the objective functions by using appropriate scaling factors $c_1,c_2>0$, and replace $f_1$ and $f_2$ in \eqref{eq:weightedsum} by $c_1f_1$ and $c_2f_2$, respectively. 

Despite the difficulties mentioned above, the weighted sum method is usually well-suited to efficiently compute at least a rough approximation of the Pareto front. For this purpose, problem \eqref{eq:weightedsum} is solved iteratively for varying weights (in our case, we choose $\omega\in\{0.2,0.25,0.3,\dots,0.9\}$ since numerical experiments showed that this yields meaningful trade-offs). 
Each single objective optimization problem \eqref{eq:weightedsum} is then individually solved 
using a classical gradient descent algorithm with stepsizes determined according to the Armijo rule, see, for example, \cite{baza:nonl:2006}. 

\begin{algorithm}[htb]
  \caption{Parametric weighted sum algorithm using gradient descent\label{alg:weightedsum}}
  \KwData{Choose $\beta\in(0,1)$, $\gamma^{(1)}\in \Gamma$, weights $\omega_1,\dots,\omega_J\in(0,1)$, 
  and  $\varepsilon > 0$. }
  \KwResult{Set of approximations of Pareto critical solutions $\tilde{\gamma}_1,\dots,\tilde{\gamma}_J$. }
  \For{$j=1$ \KwTo $J$}{
  Set $\omega=\omega_j$, set $k:=1$, and set $d^{(0)}:=-\nabla f_{\omega}(\gamma^{(1)})$ and $t_0:=1$;
  
  \While{
     $\| t_{k-1} \, d^{(k-1)}\| > \varepsilon$ 
     }{
    Compute a search direction $d^{(k)}=-\nabla f_{\omega}(\gamma^{(k)})$ ; 
    
    Compute a step length $t_k\in(0,1]$ as 
        \begin{align*}
        \max \Bigl\{t\!=\!\frac{1}{2^\ell}\; :\; \ell\in \mathbb{N}_0,\, f_{\omega}(\gamma^{(k)}\!+\!td^{(k)})\leq f_{\omega}(\gamma^{(k)})\!+\!\beta \,t \nabla f_{\omega}(\gamma^{(k)})^\top d^{(k)} \Bigr\};
        \end{align*}
        
    $\gamma^{(k+1)}:=\gamma^{(k)}+t_k\, d^{(k)}$ and $k:=k+1$\;
  }
  
  $\tilde{\gamma}_j:=\gamma^{(k)}$}
\end{algorithm}

Under appropriate assumptions, the gradient descent algorithm in the inner loop of Algorithm~\ref{alg:weightedsum} converges to a critical point of \eqref{eq:weightedsum}, see, e.g., \cite{baza:nonl:2006}. 
In our implementation, the inner loop is also terminated when a prespecified maximum number of iterations is reached. However, in this case there is no guarantee that the final iterate is close to a Pareto critical solution.

Note that a critical point of the weighted sum scalarization \eqref{eq:weightedsum} is necessarily Pareto critical for the biobjective shape optimization problem \eqref{discretizedMOP}, while the converse is not true in general. This has some correspondence to the fact that global optimal solutions of a weighted sum scalarization \eqref{eq:weightedsum} are always Pareto optimal, while nonconvex problems may have Pareto optimal solutions that are not optimal for any weighted sum scalarization \eqref{eq:weightedsum}, see, e.g., \cite{ehrgott}.

Note also that the search direction $d^{(k)}=-\nabla f_{\omega}(\gamma^{(k)})$ does not necessarily satisfy $\nabla f_j(\gamma^{(k)})^\top d^{(k)}<0$, $j=1,2$, in all iterations. In other words, one objective function may deteriorate during the optimization process if only the other objective function compensates for this. 

\subsection{Biobjective Descent Algorithm}\label{subsec:descent}

Different from the weighted sum method described above, biobjective descent algorithms -- as a natural generalization of single-objective gradient descent algorithms -- are potentially capable of finding every Pareto optimal solution, if only the starting solution is chosen appropriately. While this is a rather theoretical advantage, biobjective descent algorithms are indeed highly efficient in finding (or approximating) one Pareto critical solution 
without the necessity to specify preferences. However, if a representation of the complete Pareto front is sought, they need to be combined with other search strategies. 

We adopt the multiobjective descent algorithm proposed in \cite{Fliege2000} (see also \cite{flie:comp:2018}) 
for the biobjective optimization problem \eqref{ceramicMOP}. Similar approaches have been suggested in \cite{desideri:MGDA,desi:mult:2012,giac:comp:2014}. 

 Biobjective descent algorithms iteratively improve both objective functions simultaneously. This is based on the observation that,
 if a solution $\gamma\in\R^{2n_B}$ is not Pareto critical according to \eqref{Pareto_crit}, then there exists a direction $d \in \mathbb{R}^{2n_B}$ which is a descent direction for both objectives. 
Thus, if in an iterative solution method the current iterate $\gamma^{(k)}\in\R^{2n_B}$ is not Pareto critical, a \emph{direction of steepest biobjective descent} $d^{(k)} \in \mathbb{R}^{2n_B}$ can be defined according to \cite{Fliege2000} as  a direction solving the auxiliary optimization problem
\begin{equation}
\begin{aligned}
& \min_{\rho\in\R, d\in\R^{2n_B}}
& & \rho + \frac{1}{2}\|d\|^2 \\
& \text{s.t.}
& &  \nabla f_j(\gamma^{(k)})^\top\,d \leq \rho ,\ j=1,2.
\end{aligned}
\label{search_dir_quad}
\end{equation}
Problem~\eqref{search_dir_quad} is a convex quadratic optimization problem with linear inequality constraints.  Note that the term $\frac{1}{2}\|d\|^2$ in the objective function  ensures that the problem is bounded, and that the solution $\rho=0$, $d=0$ is always feasible.  
Note also that the optimal value $\rho^*$ is negative if and only if $d^*\neq 0$, i.e., if a direction of steepest biobjective descent exists. 

When a direction of steepest biobjective descent $d^{(k)}\neq 0$ is found, then we move from $\gamma^{(k)}$ into the direction $d^{(k)}$ to a new point $\gamma^{(k+1)}:=\gamma^{(k)}+t_kd^{(k)}$.
The step length $t_k>0$ is computed using an Armijo-like rule. Towards this end, let $\beta \in (0,1)$ be a prespecified constant. Then a step length $t$ is accepted if it guarantees a sufficient biobjective descent in the sense that
\begin{align}
f_j(\gamma^{(k)}+t\,d^{(k)})\leq f_j(\gamma^{(k)})+\beta\, t\, \nabla f_j(\gamma^{(k)})^\top\,d^{(k)}, \; j=1,2.
\label{armijo_step}
\end{align}
In order to compute an acceptable step length $t$, we iteratively test the values $(\frac{1}{2})^{\ell}$, $\ell=0,1,2,\dots$ until condition \eqref{armijo_step} is satisfied. A proof for the finiteness of this procedure is given in \cite{Fliege2000}. 
The overall method is summarized in Algorithm~\ref{alg:fliege}.

\begin{algorithm}[htb]
  \caption{Biobjective descent algorithm according to \cite{Fliege2000}\label{alg:fliege}}
  \KwData{Choose $\beta\in(0,1)$, $\gamma^{(1)}\in \Gamma$ and  $\varepsilon > 0$, set $k:=1.$   }
  \KwResult{Approximation of a Pareto critical solution $\tilde{\gamma}:=\gamma^{(k)}$. }
  
  Compute $d^{(0)}:=d^{(1)}$ as a solution of \eqref{search_dir_quad} and set $t_0:=1$;

  \While{
     $\| t_{k-1}\, d^{(k-1)}\| > \varepsilon$ 
     }{
    Compute $d^{(k)}$ as a solution of \eqref{search_dir_quad}; 
    
    Compute a step length $t_k\in(0,1]$ as 
        \begin{align*}
        \max \Bigl\{t\!=\!\frac{1}{2^\ell}\; :\; \ell\!\in \mathbb{N}_0,\, f_j(\gamma^{(k)}\!+\!td^{(k)})\leq f_j(\gamma^{(k)})\!+\!\beta t \nabla f_j(\gamma^{(k)})^\top d^{(k)}, \; j\!=\! 1,2 \Bigr\};
        \end{align*}
        
    $\gamma^{(k+1)}:=\gamma^{(k)}+t_k\, d^{(k)}$ and $k:=k+1$\;
  }
\end{algorithm}

If $f_1$ and $f_2$ are continuously differentiable and $\varepsilon=0$, then Algorithm~\ref{alg:fliege} converges to a Pareto critical solution \cite{Fliege2000}. 
A natural stopping condition for practical implementations, motivated by \eqref{Pareto_crit}, is that
$\| t_k d^{(k)}\| \leq \varepsilon$, with $\varepsilon > 0$ a  prespecified small constant.

In practice, we also terminate the algorithm when  
a prespecified maximum number of iterations is reached. In this case, the final solution has to be used with caution since the optimization procedure has generally not yet converged. 

The choice of the search direction using problem \eqref{search_dir_quad} together with condition \eqref{armijo_step} implies that the iterates of Algorithm~\ref{alg:fliege} satisfy
$f(\gamma^{(k+1)})<f(\gamma^{(k)})$ for all $k=1,2,\dots$. In other words, 
the objective vector $f(\gamma^{(k+1)})$ in iteration $k+1$ is bounded above by the objective vector $f(\gamma^{(k)})$ of the previous iteration $k$, i.e., $f(\gamma^{(k+1)}) \in f(\gamma^{(k)}) -\mathbb{R}^2_{>}$. 

Several alternative Pareto critical solutions (and hence trade-off information between them) can be obtained, for example, by varying the starting solution. We follow a different approach in our implementation that is somewhat similar to the weighted sum method, and that is based on the observation that the optimal solution of problem \eqref{search_dir_quad} (i.e., the direction of steepest biobjective descent) depends on the scaling of the objective functions $f_1$ and $f_2$. Thus, Algorithm~\ref{alg:fliege} is executed repeatedly, using different scalings of the objective functions. 
In our implementation, we use a scaling parameter $s:=\bar{\omega}r^{\max}>0$ and replace $f_2$ by $sf_2$ in the optimization process, where the parameter $r^{\max}>0$ is chosen as the largest ratio between partial derivatives of $f_1$ and $f_2$, evaluated at the starting solution $\gamma^{(1)}$. 
Note that the latter aims at the constraints in problem \eqref{search_dir_quad} in the sense that they should be comparable, i.e., both objective functions should equally contribute to active constraints and thus influence the choice of the search direction. By varying the parameter $\bar{\omega}\in \{0.5,0.6,\dots,2\}$, we can compute different solutions starting from the same initial shape. Note that the volume of the solutions can be expected to increase with larger values of $\bar{\omega}$. 
%
%
%

Note also that the resulting parametric version of Algorithm~\ref{alg:fliege} is fundamentally different from the weighted sum method in Algorithm~\ref{alg:weightedsum} in the way the search directions are chosen and in the way the iterates converge to a Pareto critical solution. 

\subsection{Scalar Products and Gradients in Shape Optimization}
\label{subsec:Gradients}
The performance of Algorithms~\ref{alg:weightedsum} and \ref{alg:fliege} depends largely on the choice of the search direction, which is computed based on the discretized gradients $\nabla f_j(\gamma)$, $j=1,2$. 
Michor and Mumford \cite{michor} showed that (continuous) shape gradients calculated with respect to the ordinary $L^2$-scalar product lead to an ill defined notion of the distance of two shapes, as the infimum over all deformation path lengths is zero. They suggest a modified scalar product given by 
\begin{equation}\label{eq:curvature}
    \langle h,k\rangle_\xi=\int_{\partial\Omega}\langle h,k\rangle_{\mathbb{R}^2} \, (1+\xi \kappa^2) \, \text{d}A
\end{equation}
and show that this indeed leads to a well defined Riemannian metric on the shape space. 
Here, $h,k$ are two vector fields in normal direction to the boundary of $\partial\Omega$, $\text{d}A$ is the induced surface measure, $\kappa$ is the scalar curvature of the surface, and $\xi>0$ is a regularization parameter. 
In practice, this corresponds to a transformation of function values on $\partial\Omega$ that, given some function $g:\partial\Omega\rightarrow\R^2$, can be described by 
$g_\xi(x)=\frac{g(x)}{1+\xi\kappa^2(x) }$ for $x\in\partial\Omega$. 


We adopt a discretized version of this concept in the numerical implementation of shape gradients for both objectives $f_j$, $j=1,2$. More precisely, a discretized scalar curvature $\kappa$ is computed at grid points on the boundary $\partial\Omega$, which is represented by a polygonal approximation induced by the shape parameters $(\varrho^{\text{ml}}, \varrho^{\text{th}})\in \R^{2 n_x}$, $\varrho^{\text{th}}\in \R_>^{n_x}$. Since the upper and lower boundary of the shape $\Omega$ may have a different curvature at the same $x$-coordinate value $x_i$ ($i\in\{1,\dots,n_x\}$), we have to compute the curvature for upper and lower boundary points separately. For the upper boundary, this is realized by comparing the normals $n^{\text{u}}_i$ and $n^{\text{u}}_{i+1}$ on two consecutive facets of length $l^{\text{u}}_i$ and $l^{\text{u}}_{i+1}$, respectively. Similarly, for the lower boundary we use $n^{\text{l}}_i$, $n^{\text{l}}_{i+1}$ and $l^{\text{l}}_i$, $l^{\text{l}}_{i+1}$, and obtain
\begin{equation}
\begin{array}{ll}
\begin{array}{r@{\extracolsep{0.5ex}}c@{\extracolsep{0.5ex}}r@{\extracolsep{0.5ex}}c@{\extracolsep{0.5ex}}l}
\kappa^{\text{u}}_i&:=&\kappa^{\text{u}}(x_i) &=& \displaystyle\frac{2\| n^{\text{u}}_i -n^{\text{u}}_{i+1}  \|_2}{l^{\text{u}}_i+l^{\text{u}}_{i+1}},\\[0.5cm]
    \kappa^{\text{l}}_i&:=&\kappa^{\text{l}}(x_i)&=&\displaystyle\frac{2\| n^{\text{l}}_i -n^{\text{l}}_{i+1}  \|_2}{l^{\text{l}}_i+l^{\text{l}}_{i+1}},
\end{array} & \qquad i=1,\dots,n_x-1 .
\end{array}
    \label{kappa}
\end{equation}
The upper and lower boundaries of the shape $\Omega$ are reconstructed from the meanline and thickness representation using the linear transformation $\varrho_i^{\text{u}}=\varrho_i^{\text{ml}}+\frac{1}{2}\varrho_i^{\text{th}}$ and $\varrho_i^{\text{l}}=\varrho_i^{\text{ml}}-\frac{1}{2}\varrho_i^{\text{th}}$, $i=1,\dots, n_x$. In other words, $(\varrho^{\text{u}}, \varrho^{\text{l}})\in \R^{2 n_x}$ is obtained from $(\varrho^{\text{ml}}, \varrho^{\text{th}})\in \R^{2 n_x}$, $\varrho^{\text{th}}\in\R_>^{n_x}$, as  $(\varrho^{\text{u}}, \varrho^{\text{l}})=M\, (\varrho^{\text{ml}}, \varrho^{\text{th}})$, using an appropriate transformation matrix $M \in \mathbb{R}^{{2n_x}\times {2n_x}}$. 
This leads to a discretized representation of the respective boundaries by points $(x_i,\varrho_i^{\text{u}})$ (upper boundary) and
$(x_i,\varrho_i^{\text{l}})$ (lower boundary), from which the $\kappa$ values can be computed according to \eqref{kappa}. 

Now \eqref{eq:curvature} can be applied to the gradients 
of $f_j$ w.r.t.\ $(\varrho^{\text{u}}, \varrho^{\text{l}})$, $j=1,2$, 
by multiplying the respective partial derivatives by 
\begin{equation*}
    d_{\xi,i}^{\text{u}}:=\frac{1}{1+\xi\,(\kappa^{\text{u}}_i)^2} \quad\text{and}\quad d_{\xi,i}^{\text{l}}:=\frac{1}{1+\xi\,(\kappa^{\text{l}}_i)^2}, \quad i=1,\dots, n_x.
\end{equation*}
Since we actually need the gradients of $f_j$ w.r.t.\ $\varrho=(\varrho^{\text{ml}}, \varrho^{\text{th}})$, $j=1,2$, we additionally have to consider the linear tranformation $M$. 
Let $D_\xi=(d_{\xi,ij})_{2n_x\times 2n_x}\in \mathbb{R}^{{2n_x}\times {2n_x}}$ 
be a diagonal matrix with diagonal elements given by
\begin{equation*}
    d_{\xi,ii}:=d_{\xi,i}^{\text{u}},\;\; i=1,\dots, n_x \quad\text{and}\quad d_{\xi,ii}:=d_{\xi,i-n_x}^{\text{l}},\;\; i=n_x+1,\dots, 2n_x,
\end{equation*}
and set $\bar{D}_\xi:=M^{-1}\,D_\xi\,M$. Then we obtain the curvature adapted B-spline gradients as
\begin{equation}
\Bigl(\frac{\partial f_j}{\partial\gamma}\Bigr)_{\xi}
    =\bar{D}_\xi\left(\frac{\partial f_j}{\partial X} \, \frac{\partial X}{\partial\varrho}\right)\frac{\partial\varrho}{\partial\gamma},\qquad j=1,2.
\end{equation}
Note that for $\xi=0$ the matrix $\bar{D}_0$ is the identity matrix, and hence the $L^2$-gradient of $f_j$ w.r.t.\ $\gamma$, $j=1,2$, is recovered in this case,  c.f.\ \eqref{eq:L2gradient}.

\subsection{Control of Step Sizes}
\label{subsec:Steps}
Large mesh deformations may cause numerical difficulties and thus have to be avoided. We thus limit the step size during the optimization procedure. Recall that the representation of feasible shapes, using meanline and thickness values $(\varrho^{\text{ml}}_i,\varrho^{\text{th}}_i)$ at fixed $x_i$ coordinates, $i=1,\dots,n_x$, implies that grid points can only move vertically. A natural choice for a maximum admissible step in one iteration of the optimization process is thus determined by the thickness of the shape, divided by the number $n_y$ of gridpoints in $y$-direction. Since in our case studies the shapes are fixed at the left boundary (i.e., at $x=x_1$) and hence their thickness is constant at $x_1$, we set 
\[\delta^{\max} := 0.8 \, \frac{\varrho_{1}^{\text{th},(1)}}{n_y}\]
i.e., to $80\%$ of the vertical distance between grid points on the left boundary of the initial shape. 
For a given search direction $d^{(k)}=(d^{\text{ml},(k)}, d^{\text{th},(k)}) \in \R^{2 n_B}$ in iteration $k$ of the optimization algorithms, we check whether  $\max_{i=1,\dots,2n_B} |d^{(k)}_i |\leq \delta^{\max}$. 
Otherwise,  $d^{(k)}$ is scaled by a factor ${\delta^{\max}}/{\max_{i=1,\dots,2n_B} |d^{(k)}_i|}$. 
Then the step length $t\leq 1$ is computed according to the Armijo rule as indicated in Algorithms~\ref{alg:weightedsum} and \ref{alg:fliege}.

While $\delta^{\max}$ is derived from the mesh $X^{(1)}$, it still is a meaningful upper bound for a step $d^{(k)}$ in the B-spline representation. Indeed, 
if $\{\vartheta_j,\, j=1, \dots, n_B  \}$ is a B-spline basis and $\gamma^{(k)}=(\gamma^{\text{ml},(k)}, \gamma^{\text{th},(k)}) \in \Gamma$ is the current iterate, then the B-spline basis properties $\sum_{j=1}^{n_B}\vartheta_j(x)=1$ and $\vartheta_j(x)\geq0$, $j=1,\dots,n_B$ (see, e.g., \cite{nurbs}) imply that, for all $i=1,\dots,n_x$,
\begin{eqnarray*}
\lefteqn{    \left|  \varrho_i^{\text{ml},(k+1)} - \varrho_i^{\text{ml},(k)}  \right|
    = \Bigl|\sum_{j=1}^{n_B}(\gamma_j^{\text{ml},(k)}\!\!+\!d_j^{\text{ml},(k)})\vartheta_j(x_i)\!-\!\!\sum_{j=1}^{n_B}\gamma_j^{\text{ml},(k)}\vartheta_j(x_i)\Bigr| }\\
    &&\leq \sum_{j=1}^{n_B} |d_j^{\text{ml},(k)}|\,|\vartheta_j(x_i)| 
    \leq \max_{j=1,\dots,n_B}|d_j^{\text{ml},(k)}|\,  \sum_{j=1}^{n_B} |\vartheta_j(x_i)|
    = \max_{j=1,\dots,n_B}|d_j^{\text{ml},(k)}| .
\end{eqnarray*}
An analogous bound holds for the corresponding thickness parameters. 
Note that the above inequalities do in general not guarantee that \emph{all} grid points of the corresponding mesh $X^{(k)}$ move by at most $80\%$, since this also depends on 
the current shape and 
the mutual movement of meanline and thickness values. In some situations it may thus be necessary to adapt this bound to a smaller value. However, this never occured in our numerical tests. 

\section{Case Studies}
\label{sec:CaseStudies}
We consider 2D ceramic shapes made out of beryllium oxide (BeO) under tensile load.
Therefore, we set Young's modulus to $\texttt{E}=320\,\text{GPa}$ (see, e.g., \cite{munz}), 
Poisson's ratio to $\nu=0.25$, 
and the ultimate tensile strength to $140\,\text{MPa}$, according to \cite{crcmaterials}. 
Weibull's modulus is set to $m=5$, which is on the lower bound of industrial ceramics having $m$ between 5 and 30 as depending on the production process \cite{keramikverband}. 
All considered shapes have a fixed length of  $1.0\,\text{m}$ and a fixed height of $0.2\,\text{m}$ on the left and right boundaries. The shapes are fixed on the left boundary, where Dirichlet boundary conditions hold ($\partial\Omega_{D}$), and on the right boundary, where surface forces may act on and Neumann boundary conditions 
hold  ($\partial\Omega_{N_{\text{fixed}}}$). The upper and lower boundaries are assumed to be force free ($\partial\Omega_{N_{\text{free}}}$). They can be modified within the optimization process. 
We set $\tilde{f} = 0$ neglecting the gravity forces and $\tilde{g} = 10^{7}\,\text{Pa}$,   representing tensile load. 
Note that, in order to be  consistent with 3D models, we define the force density w.r.t.\  $\text{Pa}=\text{N}/\text{m}^2$ (and not w.r.t.\  $\text{N}/\text{m}$). This is motivated by assuming a  constant width of the 2D component of $1$ unit (i.e., $1\text{m}$). 
Then plane stresses and plane strains are obtained by neglecting Poisson effects in the third dimension. 
%

The shapes are discretized by a $41\times 7$ grid (i.e., $n_x=41$ and $n_y=7$) using tetrahedrons as detailed in Section~\ref{subsec:Geometry}. The B-spline representation is based on $n_B=5$ basis functions. 
Moreover, the curvature regularization paramater is set to $\xi= 10^{-4}$, see Section~\ref{subsec:Gradients}.

During the optimization process, we monitor the Euclidean norm of the update of the design variables in every iteration and stop when it is lower than  $10^{-4}$. 
The implementation is realized in R version 3.4.2 and uses the adjoint finite element code of \cite{PaperHahn} as a subroutine.

\subsection{A Straight Joint} 
\label{subsec:Beam}

In the first test case, a straight joint is sought that is fixed at the left side, while the tensile load acts on the right side. This is a particularly simple situation where the straight rod connecting from the left to the right can be expected to be optimal, with varying thickness depending on the trade-off between the intensity measure ($f_1$) and the volume ($f_2$). The optimization algorithms are challenged by providing a bended beam as a starting shape, which is clearly far from being optimal. 

The starting shape is shown in Figure~\ref{fig:TC1_Start},  together with the $41\times 7$ tetrahedral discretization $X$. Its objective values are $f_1(X^{(1)})=0.769624$ (intensity measure) and $f_2(X^{(1)})=0.2$ (volume), respectively. The relatively high value for the intensity measure $f_1$ can be explained by the relatively high stresses that are illustrated in Figure~\ref{fig:TC1_Start_sigma}. Figure~\ref{fig:TC1_Start_bspline_sigma} shows that the B-spline representation based on only five basis functions leads to a rather inaccurate representation, particularly at the left and right boundary. This could be improved by fixing the slopes at the left and right boundary, however, at the price of a significantly reduced design space. Indeed, a majority of the Pareto critical shapes computed during our numerical tests do not have zero slopes at the left and right boundary, particularly in the case of the S-shaped joint considered in Section~\ref{subsec:Joint} below.
Note that the smoothing induced by the B-spline representation in this case already leads to dominating objective values of $f_1(\gamma^{(1)})=0.453867$ and $f_2(\gamma^{(1)})=0.2$. 

\begin{figure}[ht]
	\begin{center}
		\subfloat[Starting shape: Tetrahedral mesh $X$\label{fig:TC1_Start} ]{\includegraphics[width=0.3\textwidth]{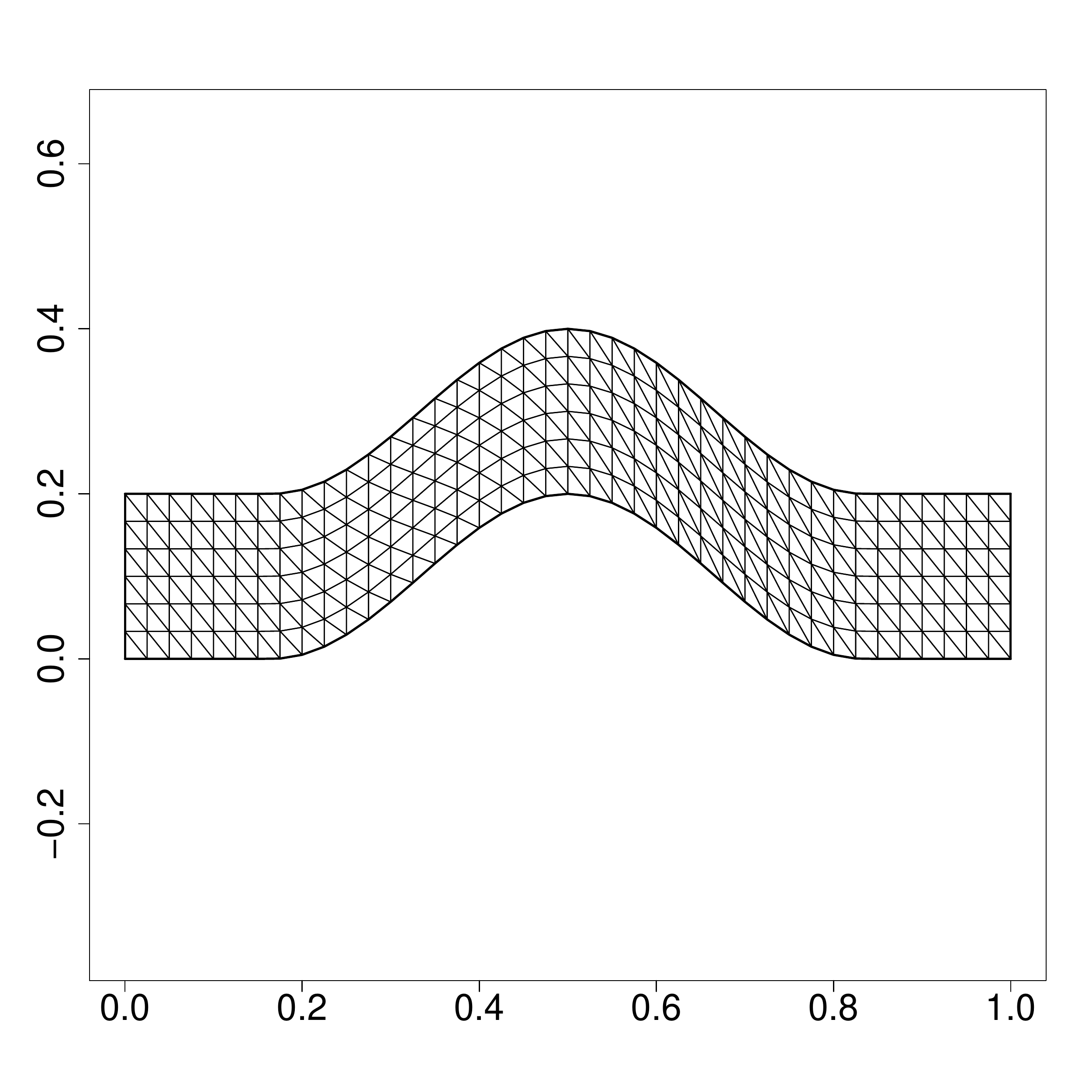}}
		\hspace{\fill}
		\subfloat[Starting shape: Objective values and stresses \label{fig:TC1_Start_sigma} ]{\includegraphics[width=0.3\textwidth]{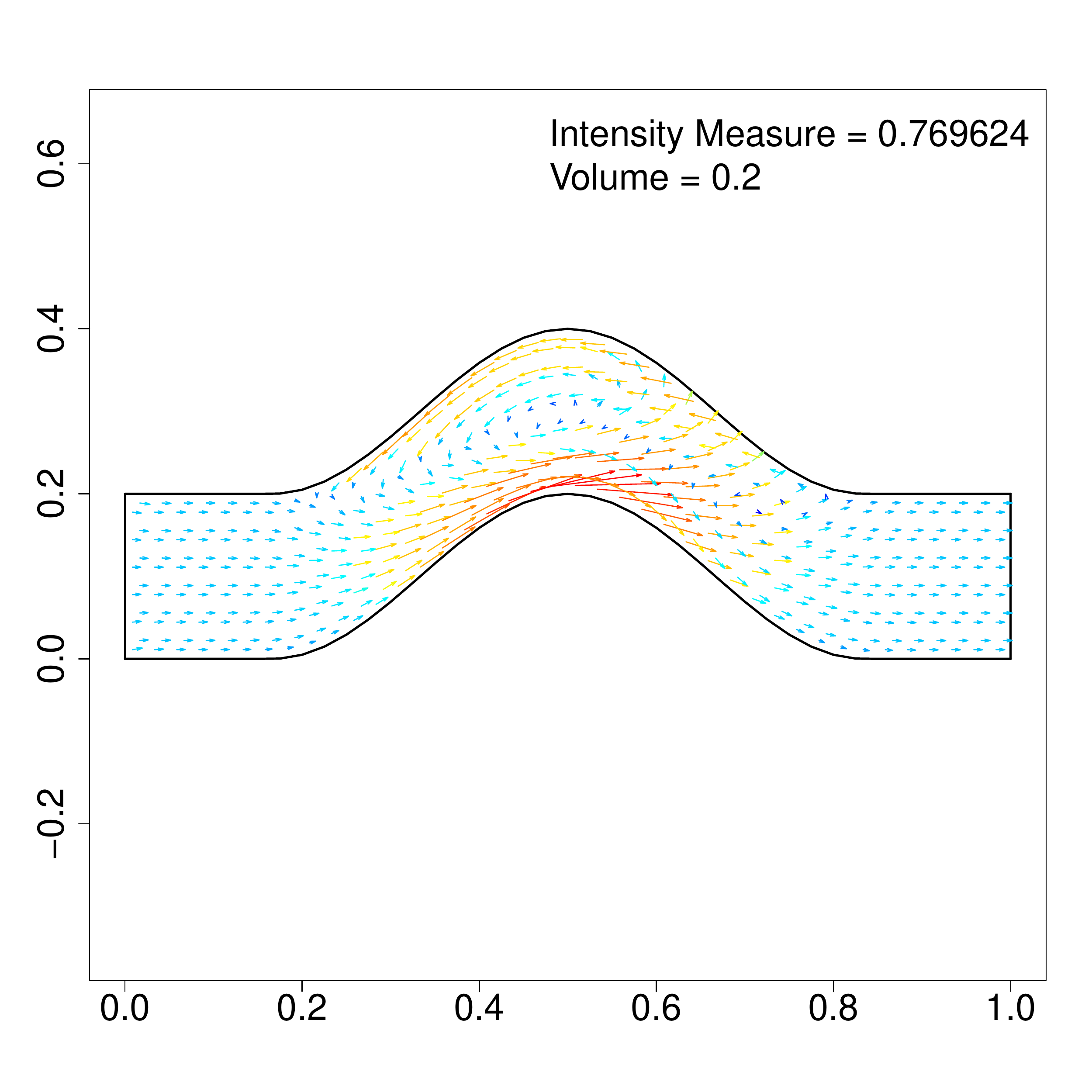}}
		\hspace{\fill}
		\subfloat[Starting shape: Approximation with B-splines \label{fig:TC1_Start_bspline_sigma} ]{\includegraphics[width=0.3\textwidth]{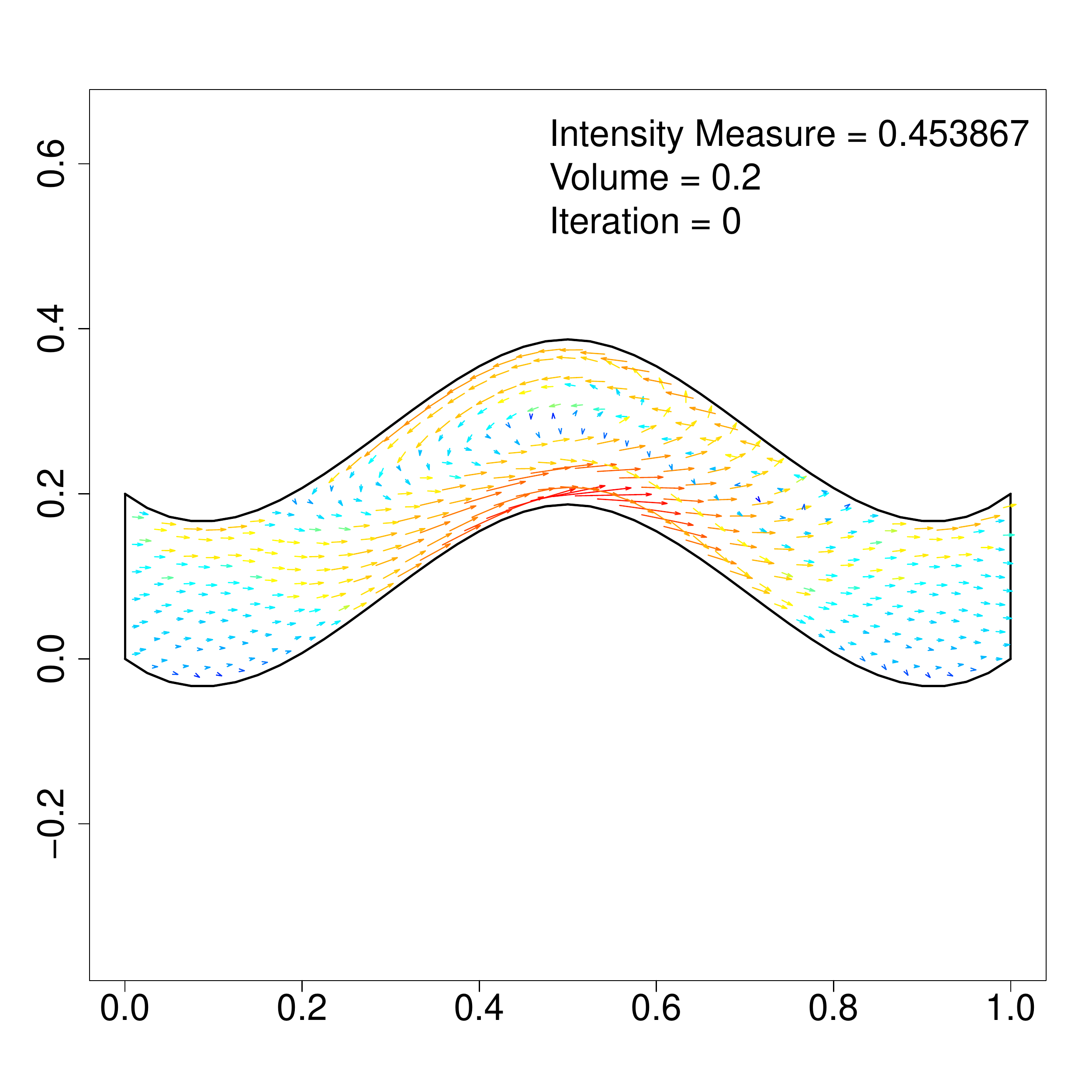}}
		\hspace{\fill}
		\subfloat[Expected result: Straight rod\label{fig:TC1_PerfectRod}
		]{\includegraphics[width=0.3\textwidth]{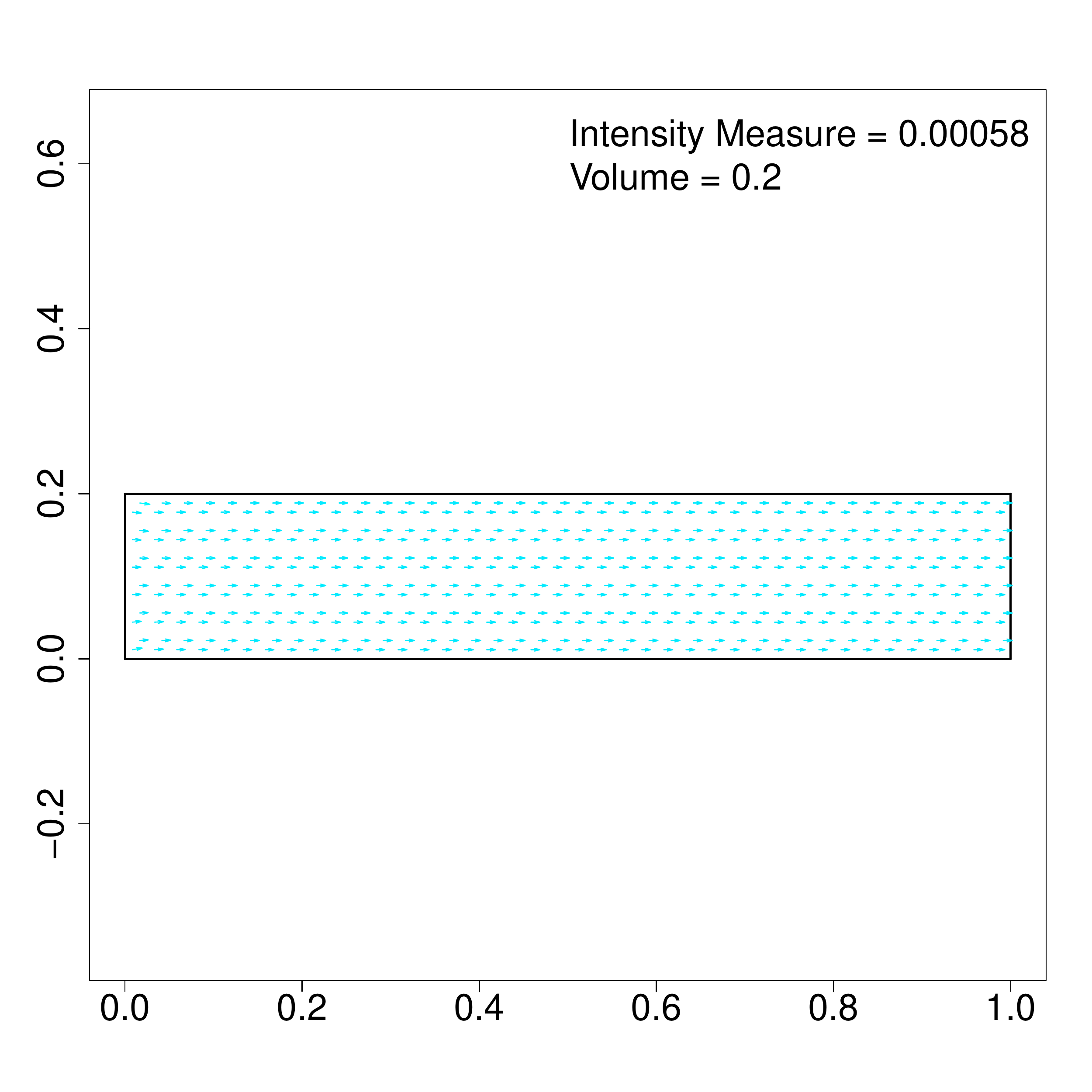}}
		\hspace{\fill}	
		\subfloat[\added{Weighted sum, $\omega = 0.8$} \label{fig:straightrodwsum8}
		]{\includegraphics[width=0.3\textwidth]{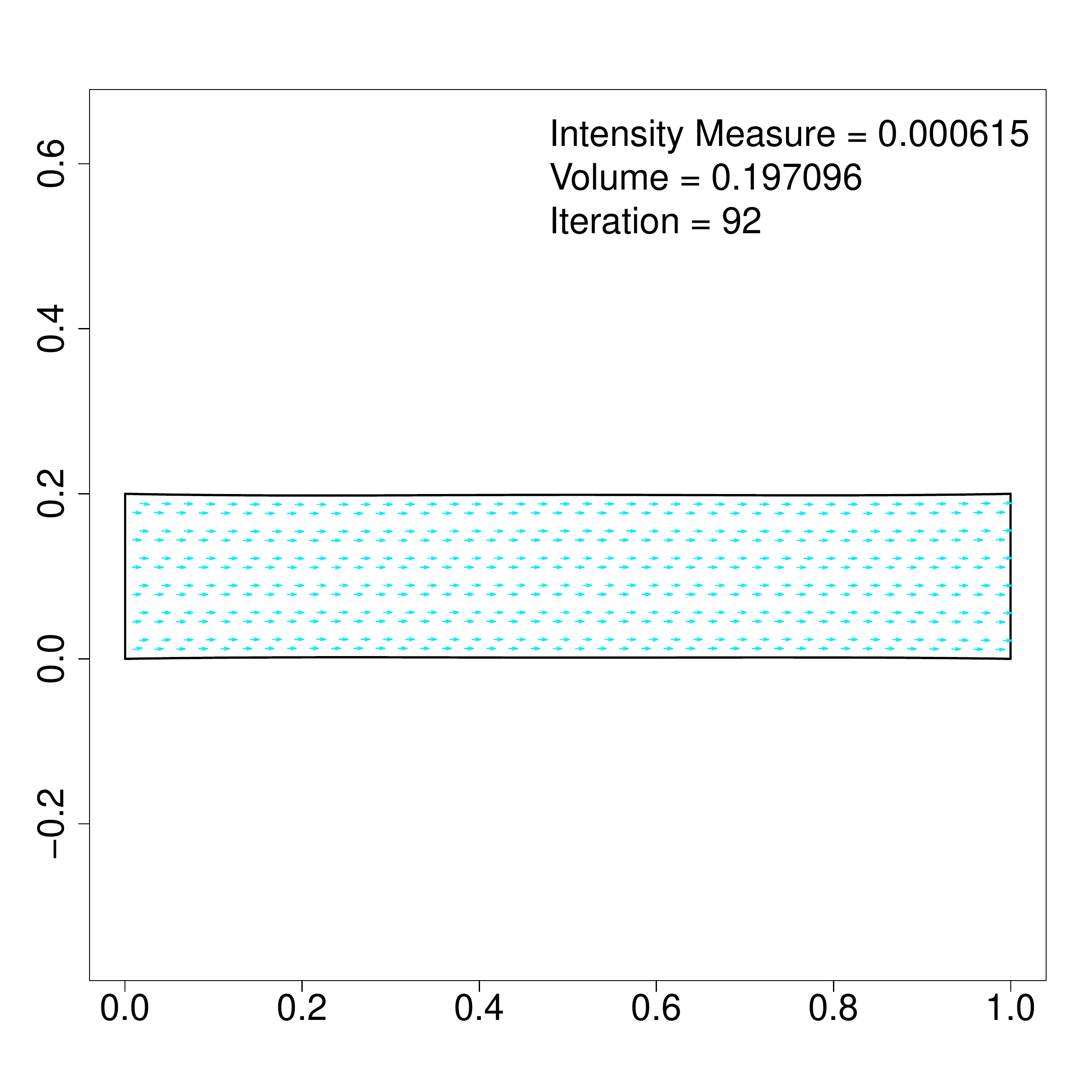}}
		\hspace{\fill}
		\subfloat[MO descent, $\bar{\omega} = 1.8$ \label{fig:straightrodmoda18}
		]{\includegraphics[width=0.3\textwidth]{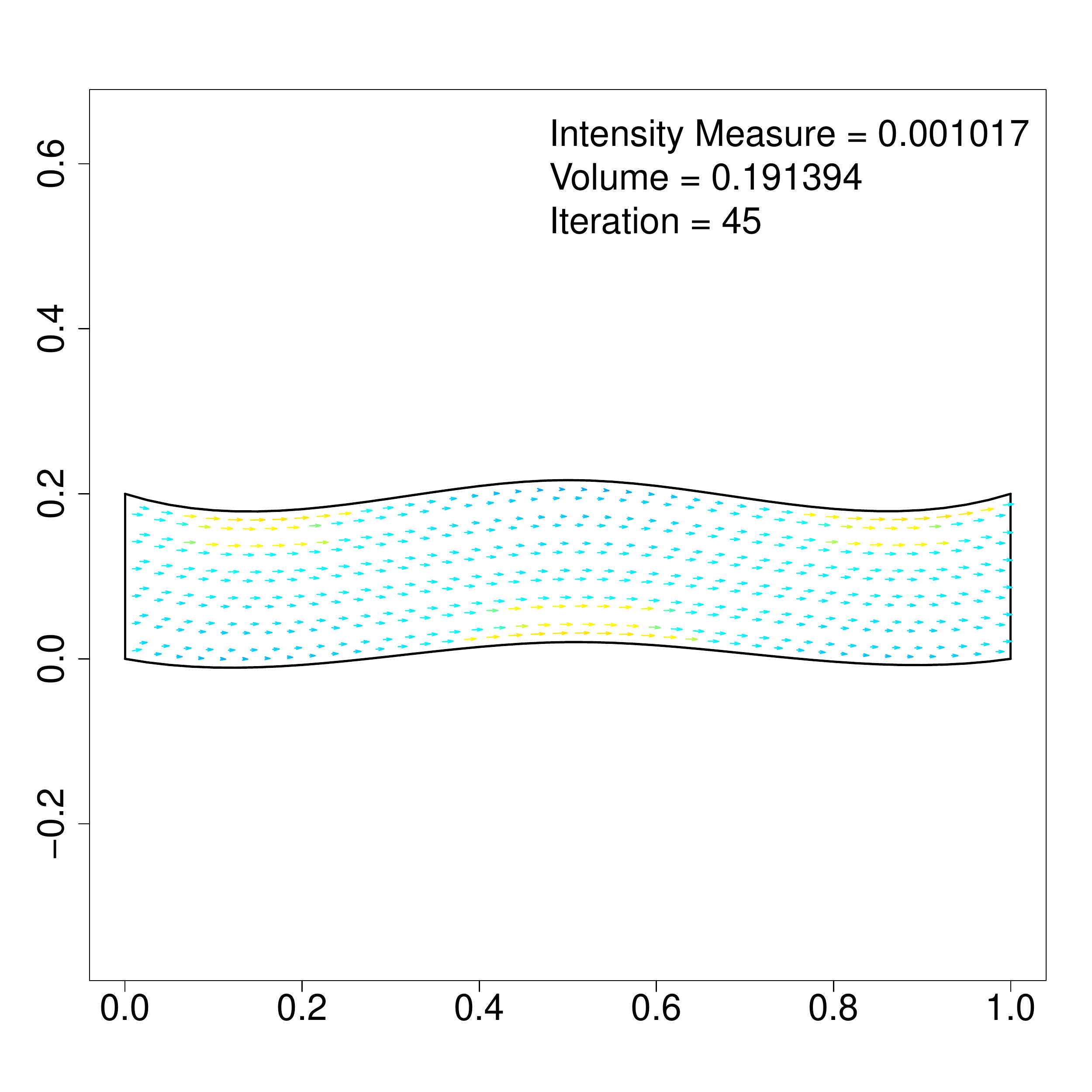}}
		\hspace{\fill}
		\subfloat[MO descent, $\bar{\omega} = 0.5$ \label{fig:straightrodmoda05}
		]{\includegraphics[width=0.3\textwidth]{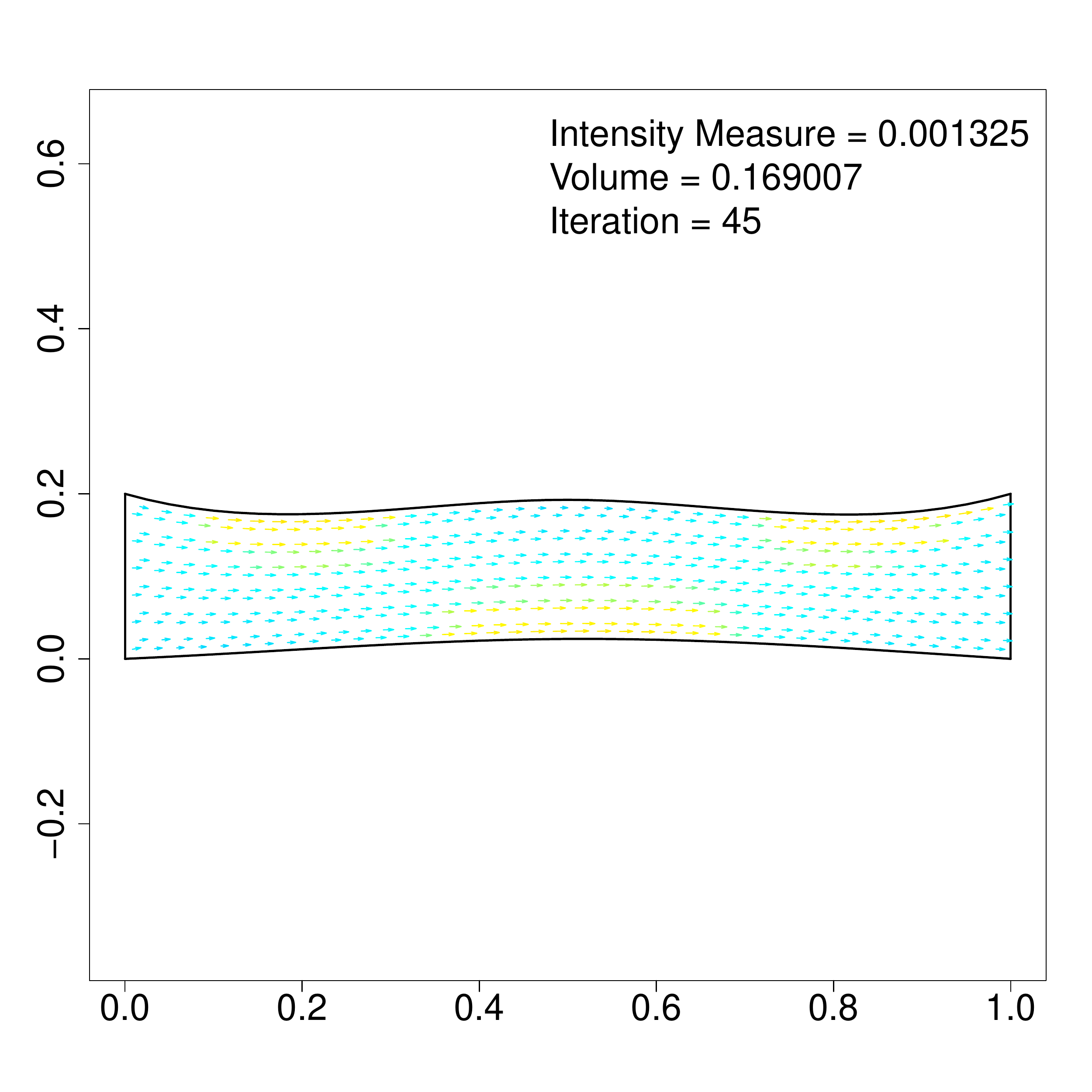}}
		\hspace{\fill}
		\subfloat[\added{Weighted sum, $\omega = 0.6$} \label{fig:straightrodwsum6}
		]{\includegraphics[width=0.3\textwidth]{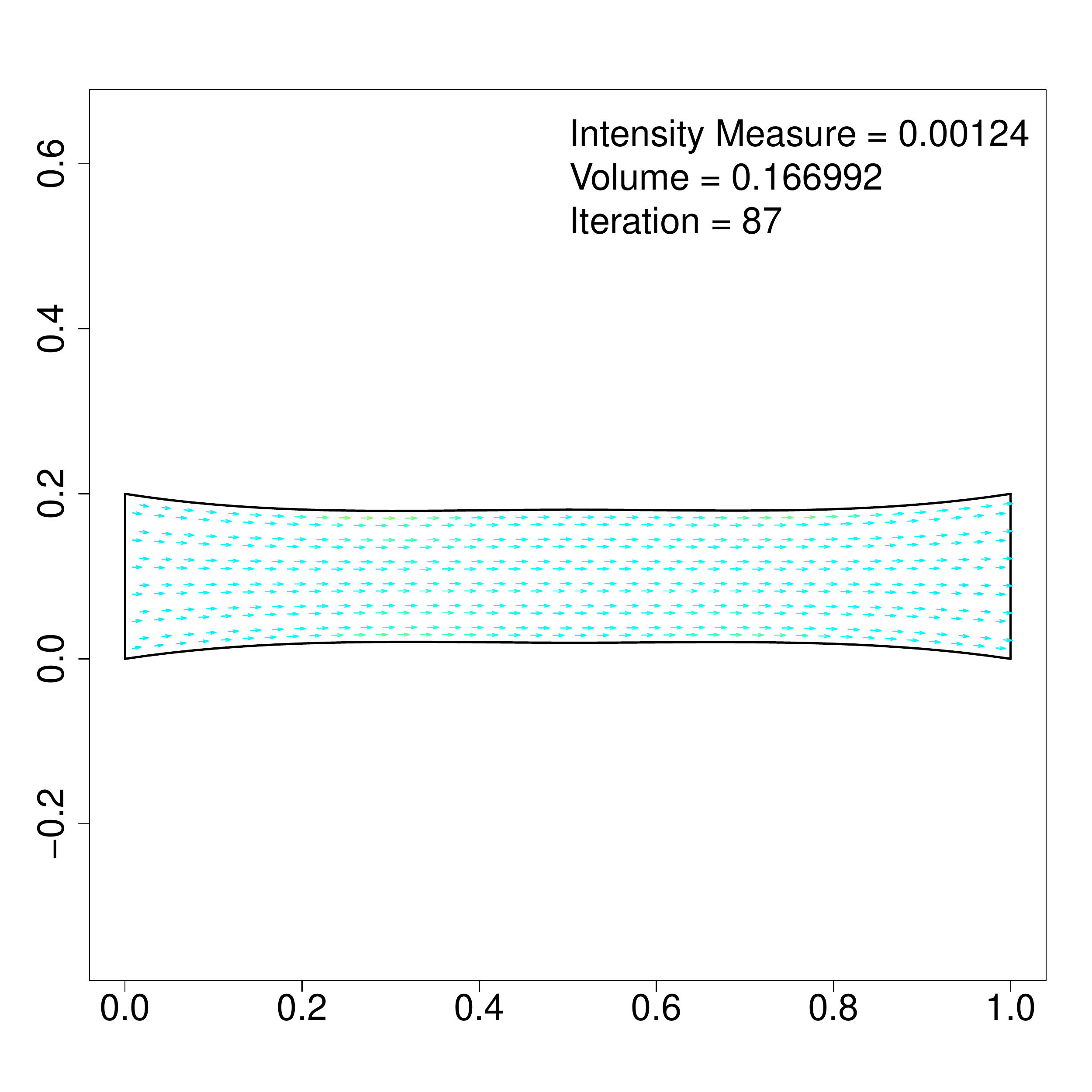}}
		\hspace{\fill}
		\subfloat[\added{Weighted sum, $\omega = 0.3$} \label{fig:straightrodwsum3}
		]{\includegraphics[width=0.3\textwidth]{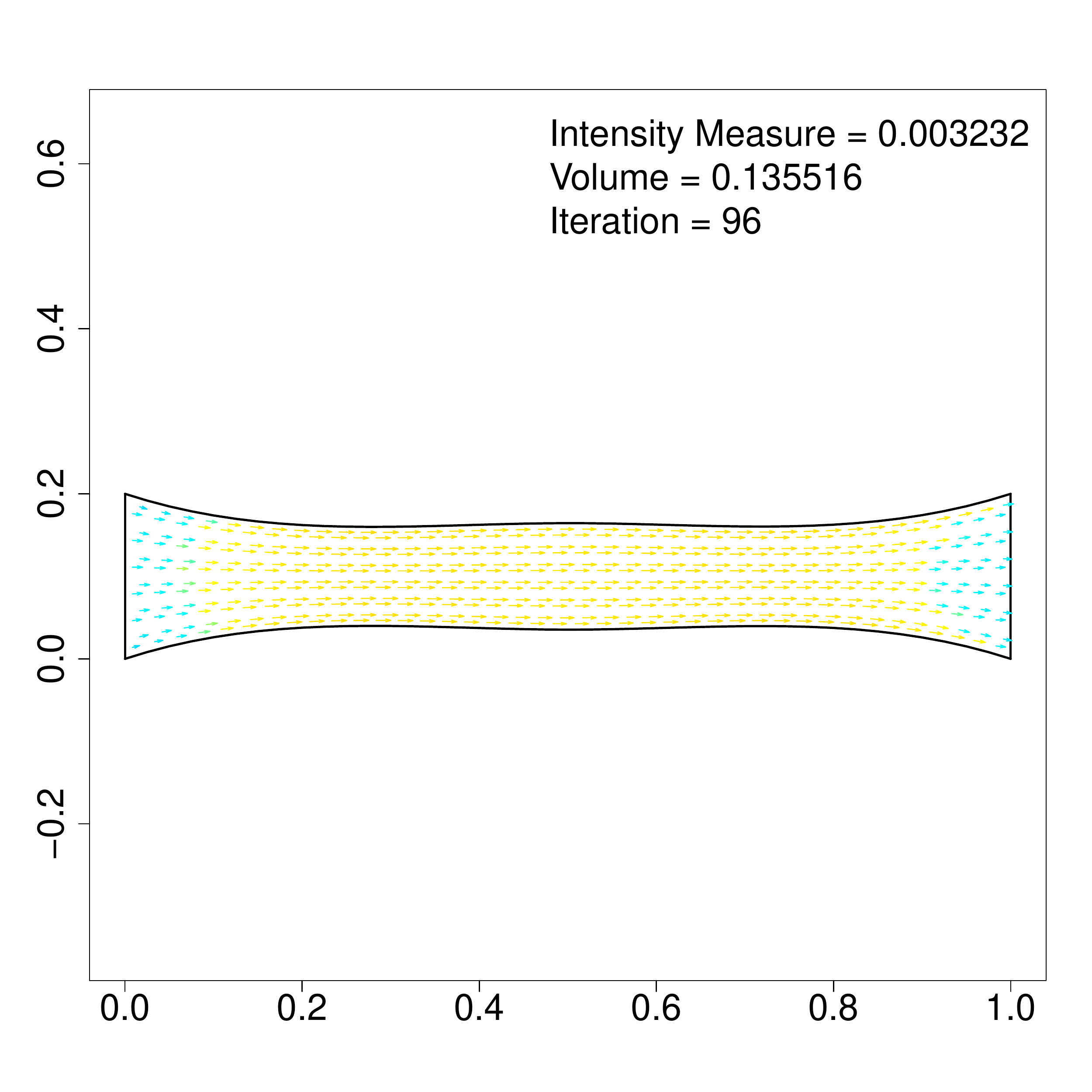}}
	\end{center}		
	\caption{Straight joint: Starting solution (row 1), straight rod solutions (row 2), and low volume solutions (row 3). \label{fig:TC1}}
\end{figure}

\paragraph{Results} 
Among all shapes with a fixed volume of $f_2(X)=0.2$, the straight rod shown in Figure~\ref{fig:TC1_PerfectRod} can be expected to have the mimimum possible intensity measure $f_1$. Indeed, the straight rod shown in Figure~\ref{fig:TC1_PerfectRod} achieves an objective value of $f_1(X)=0.00058$.  Figures~\ref{fig:straightrodwsum8} and \ref{fig:straightrodmoda18} show the results of the weighted sum method (Algorithm~\ref{alg:weightedsum}) with weight $\omega=0.8$ and of the biobjective descent algorithm (Algorithm~\ref{alg:fliege}) with scaling parameter $\bar{\omega}=1.8$. Both methods show a rather quick convergence (with the expected advantage for the biobjective descent algorithm) to solutions that are close to optimal. However, the solution of the biobjective descent algorithm seems to be a local solution with slightly higher stresses (and thus slightly higher objective value for $f_1$). 

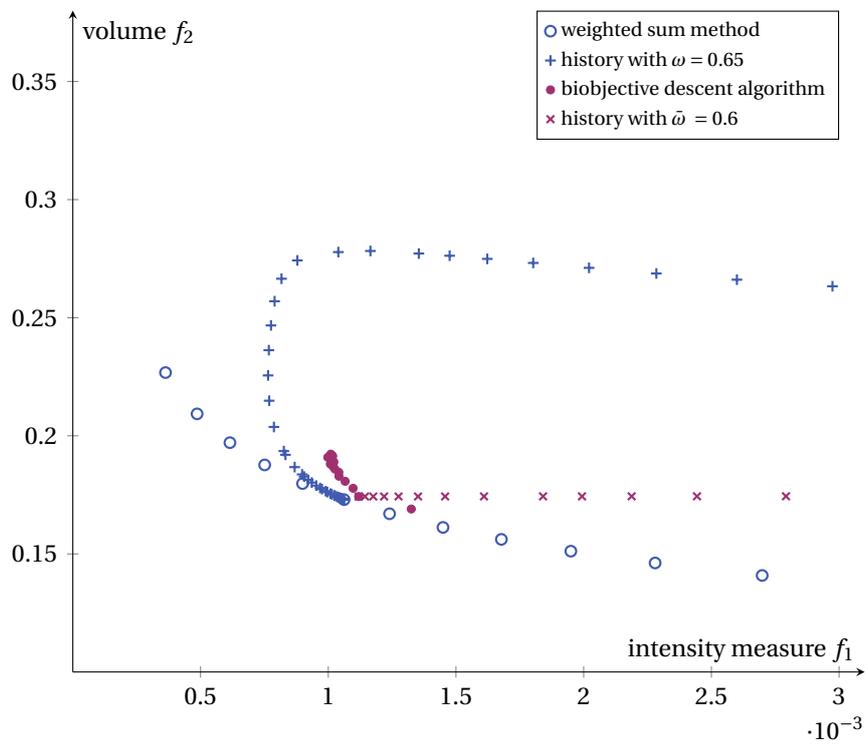
\begin{figure}
\definecolor{c1}{HTML}{3FA137}
\definecolor{c2}{HTML}{405CB0}
\definecolor{c3}{HTML}{A0306F}
\definecolor{c4}{HTML}{C7A748}
\centering
\begin{tikzpicture} 
\begin{axis}[axis lines=middle,
    width=12cm,
    xmin=0, 
    xmax=0.0031, 
    ymin=0.10,
    ymax=0.38,
	label style={font=\footnotesize}, 
	tick label style={font=\footnotesize},
    xlabel=intensity measure $f_1$, 
    ylabel={volume $f_2$},
    legend style={cells={anchor=west},font=\scriptsize, at={(axis cs:0.003,0.38)}}
    ] 
    
\addplot[only marks, color=c2,mark=o, thick]	table[x=V1,y=V2]{Data/RM01_wsm.txt};
\addplot[only marks, color=c2,mark=+, thick]	table[x=V1,y=V2]{Data/RM01_wsmhist.txt};
\addplot[only marks, color=c3,mark=*, mark size=1.5pt]	table[x=V1,y=V2]{Data/RM01_moda.txt};
\addplot[only marks, color=c3,mark=x, thick]	table[x=V1,y=V2]{Data/RM01_modahist.txt};

\legend{weighted sum method,history with $\omega = 0.65$, biobjective descent algorithm,  history with $\bar{\omega}\ = 0.6$}
\end{axis} 
\end{tikzpicture}
\caption{Iteration histories of an exemplary run \added{of the weighted sum method (Algorithm \ref{alg:weightedsum}) } and of the biobjective descent algorithm (Algorithm \ref{alg:fliege}). \label{fig:histories}}
\end{figure}

Figure~\ref{fig:histories} shows iteration histories of exemplary runs of the weighted sum method (Algorithm~\ref{alg:weightedsum})  and of the biobjective descent algorithm (Algorithm~\ref{alg:fliege}), respectively. It nicely illustrates that, in contrast to the biobjective descent algorithm, the weighted sum method 
permits iterations where one objective function deteriorates while the weighted sum objective is still decreasing. This may, in certain situations, help to overcome local Pareto critical solutions. On the other hand, the weighted sum method may get stuck in local minima as well. 
Indeed, independent of the chosen weight, the histories of the weighted sum method have a similar structure: First mainly the intensity measure (representing the PoF) is improved (since in early stages of the algorithm the gradient of $f_1$ is considerably larger than the gradient of $f_2$). Only at later stages of the algorithm, the volume is varied to a larger extent, depending on the given weight.

Note also that the final solution obtained with the biobjective descent algorithm largely depends on the starting solution, since the objective values can never deteriorate during the optimization process. Thus, when the starting solution has a volume of $f_2(X)=0.2$, then all Pareto critical shapes that can be computed with the biobjective descent algorithm  have a volume of at most $0.2$, irrespective of the scaling.

Three shapes with progressively reduced volume (and hence lower cost) are shown in Figures~\ref{fig:straightrodmoda05} to \ref{fig:straightrodwsum3}. As was to be expected, a lower cost comes at the price of a higher intensity measure (and hence higher PoF). A comparison between Figures~\ref{fig:straightrodwsum6} and \ref{fig:straightrodmoda05} suggests that also for the low volume solutions, the weighted sum solutions slightly outperform the biobjective descent solutions.

Figure~\ref{objective_space:bended_beam} summarizes the results of several optimization runs with varying weights (Algorithm~\ref{alg:weightedsum}) and varying scalings (Algorithm~\ref{alg:fliege}), respectively. The same starting solution was used in all cases, see Figure~\ref{fig:TC1_Start_bspline_sigma}. While the solution quality of the weighted sum method and of the biobjective descent algorithm is comparable, a clear advantage of the weighted sum method seams to be that it is not so much constrained by the (performance of the) starting solution. Indeed, the weighted sum solutions shown in  Figure~\ref{objective_space:bended_beam} span a large range of alternative objective values in the objective space and thus provide the decision maker with meaningful trade-off information and a variety of solution alternatives.

\begin{figure}
    \centering
\definecolor{c1}{HTML}{3FA137}
\definecolor{c2}{HTML}{405CB0}
\definecolor{c3}{HTML}{A0306F}
\definecolor{c4}{HTML}{C7A748}
\begin{tikzpicture} 
\begin{axis}[axis lines=middle,
width=12cm,
xmin=0, 
xmax=0.00399, 
ymin=0.12,
ymax=0.25,
label style={font=\footnotesize}, 
tick label style={font=\footnotesize},
xlabel=intensity measure $f_1$, 
ylabel={volume $f_2$},
legend style={cells={anchor=west},font=\scriptsize, at={(axis cs:0.004,0.25)}}]

\addplot[only marks, color=c2,mark=o, thick]	table[x=V1,y=V2]{Data/RM01_wsm.txt};
\addplot[only marks, color=c3,mark=x, thick]	table[x=V1,y=V2]{Data/RM01_moda.txt};

\legend{weighted sum method,
biobjective descent algorithm}

    \node (node15) at (axis cs:0.00628642587582701,	0.119057531107808) {}; 
    \node (node20) at (axis cs:0.00481763826282873,	0.12534762612515) {}; 
    \node (node25) at (axis cs:0.00386989812386961,	0.130932555461196) {}; 
    \node (node30) at (axis cs:0.00323266332817947,	0.13551242350698) {}; 
    \node (node35) at (axis cs:0.00269879329456956,	0.140868670131301) {}; 
    \node (node40) at (axis cs:0.00227968761622789,	0.146152843217973) {}; 
    \node (node45) at (axis cs:0.00194970959124313,	0.151148050655687) {}; 
    \node (node50) at (axis cs:0.00167777126657568,	0.15614121054517) {}; 
    \node (node55) at (axis cs:0.00144919934915186,	0.161205597371257) {}; 
    \node (node60) at (axis cs:0.00123996065313321,	0.166987248743939) {}; 
    \node (node65) at (axis cs:0.00106192720380636,	0.172968952855947) {}; 
    \node (node70) at (axis cs:0.000899411323172636,0.179778866325113) {}; 
    \node (node75) at (axis cs:0.00075101863016599,	0.18766506002071) {}; 
    \node (node80) at (axis cs:0.000615054249408687,0.197090098775131) {}; 
    \node (node85) at (axis cs:0.000486452092855204,0.209286486638988) {}; 
    \node (node90) at (axis cs:0.00036280349850509,	0.226764394316477) {}; 

    \node (MAT05) at (axis cs:0.00132504535554398,  0.169006734401845) {}; 
    \node (MAT06) at (axis cs:0.0011187863031739,	0.174296716601323) {}; 
    \node (MAT07) at (axis cs:0.00109714609009212,	0.177784817157023) {}; 
    \node (MAT08) at (axis cs:0.00106601505390083,	0.180723297031942) {}; 
    \node (MAT09) at (axis cs:0.00104175410037659,	0.182878150355639) {}; 
    \node (MAT10) at (axis cs:0.00104139457627547,	0.184542138196684) {}; 
    \node (MAT11) at (axis cs:0.00102610350522279,	0.185899673259722) {}; 
    \node (MAT12) at (axis cs:0.00101825320934445,	0.187119965813399) {}; 
    \node (MAT13) at (axis cs:0.001008193643443, 	0.188084492988564) {}; 
    \node (MAT14) at (axis cs:0.00102187887867795,	0.188912976115544) {}; 
    \node (MAT15) at (axis cs:0.00101527987417515,	0.189632527479652) {}; 
    \node (MAT16) at (axis cs:0.00100955286419131,	0.190263797410544) {}; 
    \node (MAT17) at (axis cs:0.000998131657727939,	0.190895147595957) {}; 
    \node (MAT18) at (axis cs:0.00101726404184479,	0.191393758150319) {}; 
    \node (MAT19) at (axis cs:0.00101323548155685,	0.191837265856783) {}; 
    \node (MAT20) at (axis cs:0.0010096246238397, 	0.192236567327125) {}; 

    \node (contour35) at ($(node35)+ (2cm,0.8cm)$){\includegraphics[width = 0.15\textwidth]{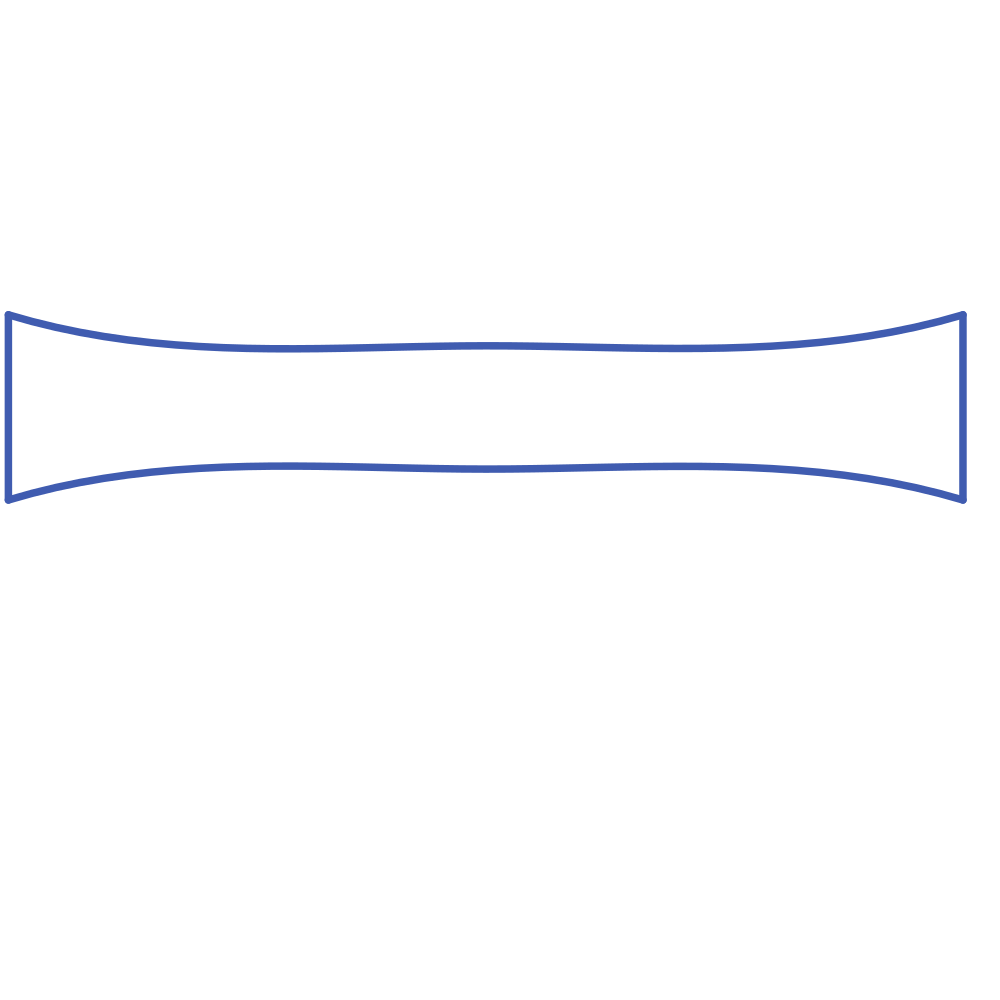}};
    \node (contour90) at  ($(node90)+ (2cm,0.8cm)$)  {\includegraphics[width = 0.15\textwidth]{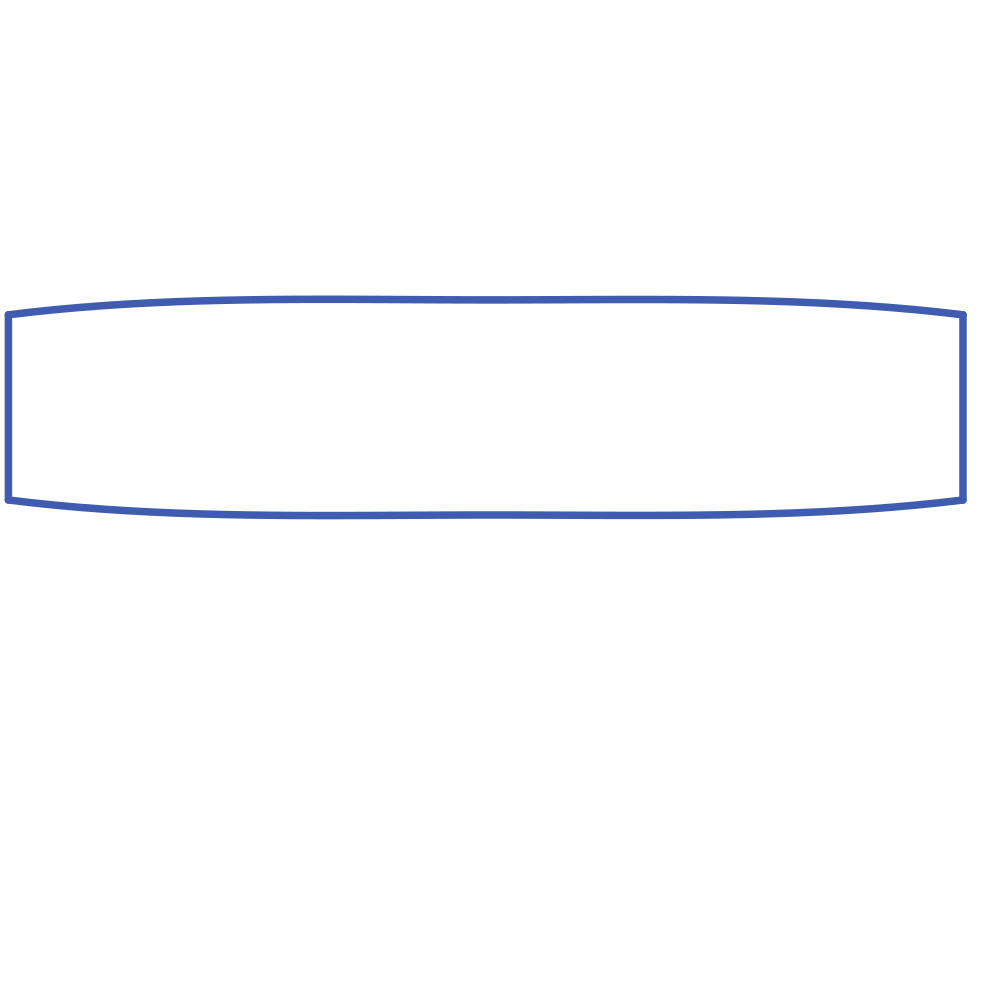}};
    
    \node (contourMAT05) at ($(MAT05)+ (2.1cm,0.5cm)$) {\includegraphics[width = 0.15\textwidth]{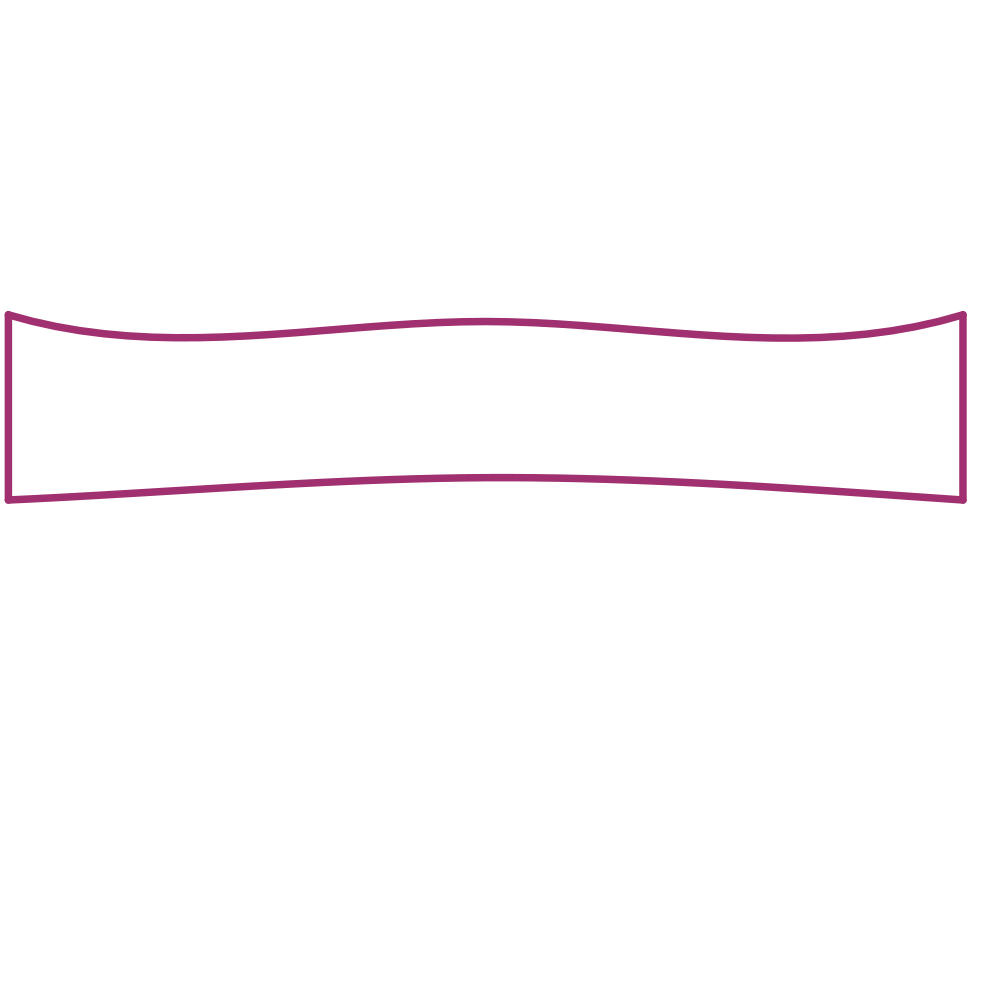}};
    \node (contour60) at ($(node60)+ (2.6cm,-0.1cm)$) {\includegraphics[width = 0.15\textwidth]{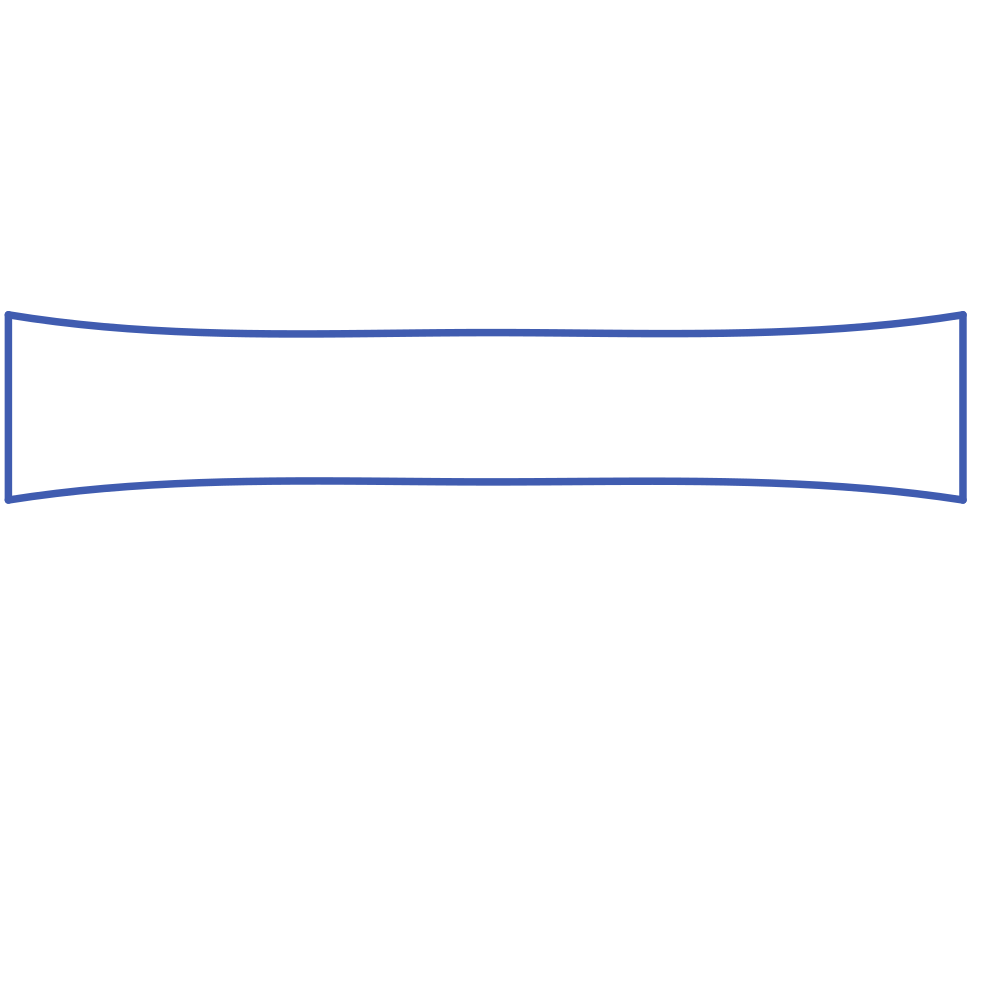}};
    
    \node (contourMAT10) at ($(MAT10)+ (2cm,1.1cm)$){\includegraphics[width = 0.15\textwidth]{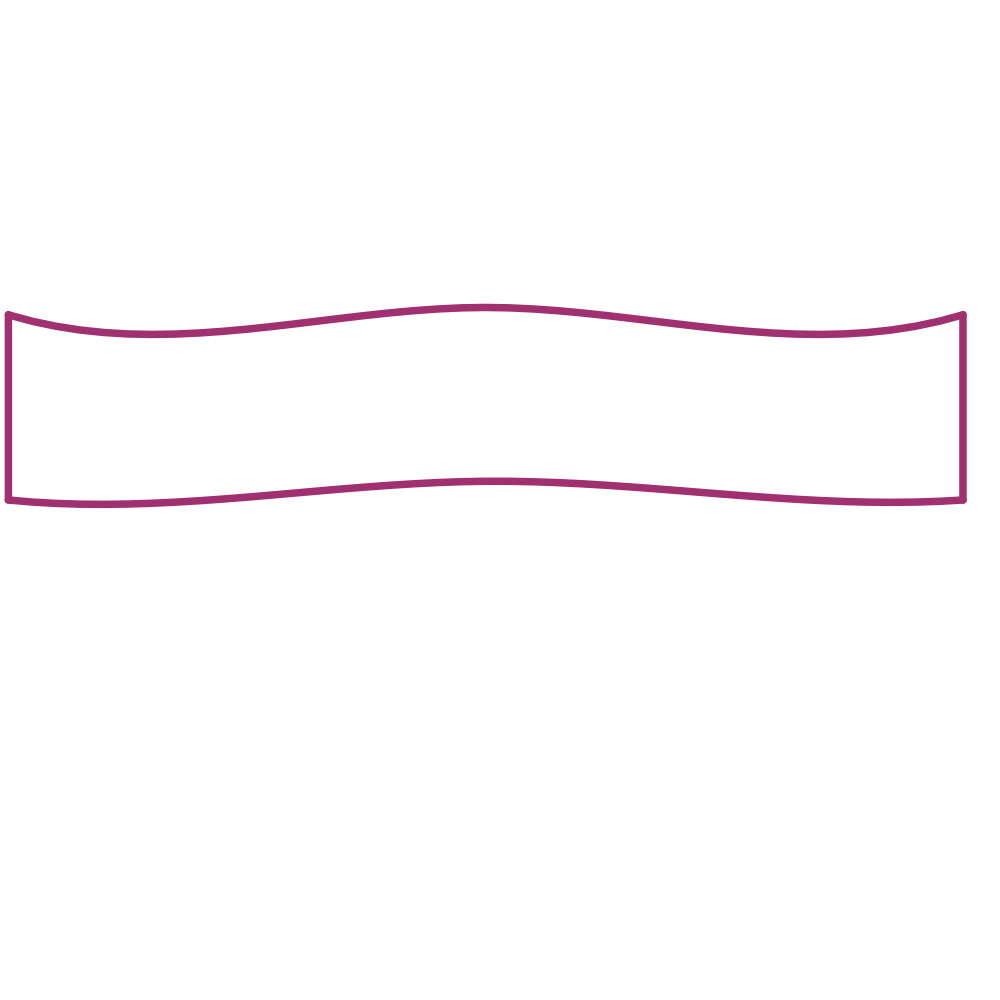}};
    \node (contour70) at ($(node70)+ (2.8cm,0.7cm)$) {\includegraphics[width = 0.15\textwidth]{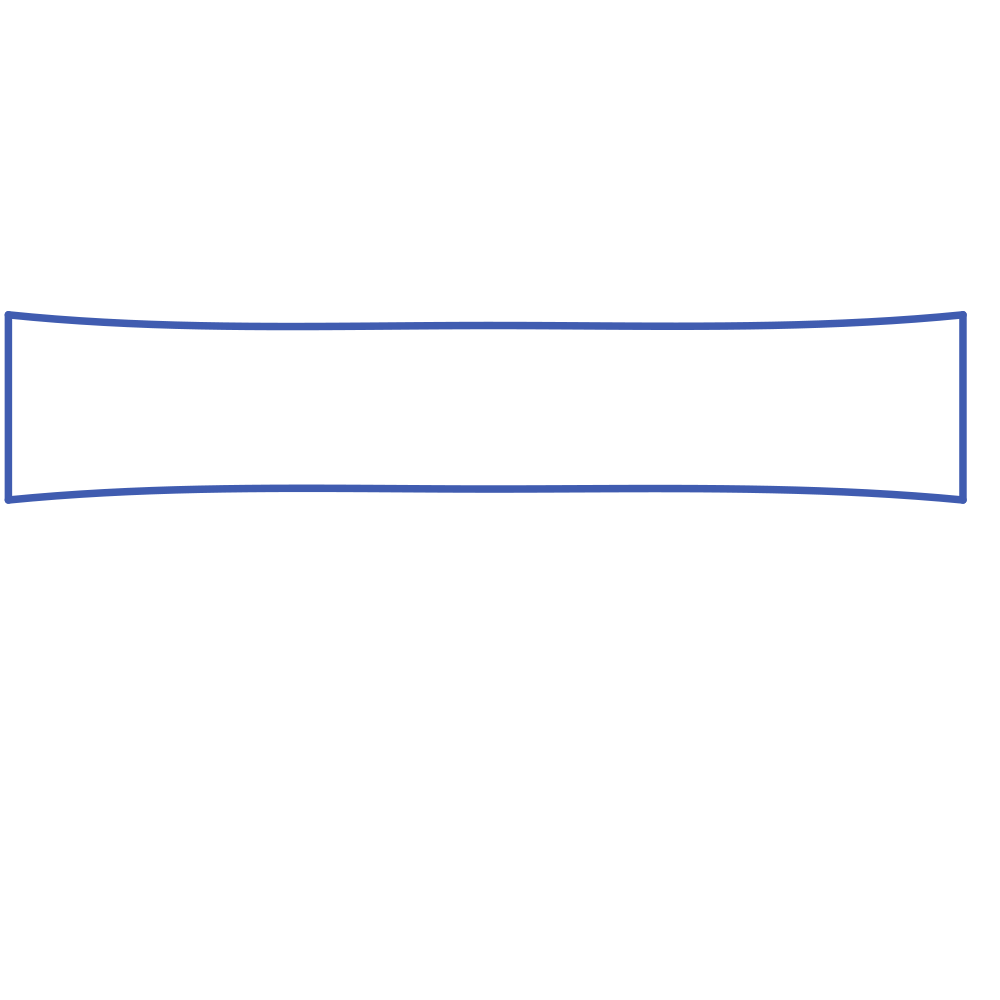}};

    \draw[very thin,color=c2] (node35) -- (contour35.west)   node[midway, above, sloped] {\tiny{$\omega = 0.35$}};
    \draw[very thin,color=c2] (node60) -- (contour60.west)  node[midway, above, sloped] {\tiny{$\omega = 0.6$}};
    \draw[very thin,color=c2] (node70) -- (contour70.west)  node[midway, above, sloped] {\tiny{$\omega = 0.7$}};
    \draw[very thin,color=c2] (node90) -- (contour90.west)  node[midway, above, sloped] {\tiny{$\omega = 0.9$}};
    
    \draw[color=c3,very thin] (MAT05) -- (contourMAT05.west)  node[midway, above, sloped] {\tiny{$\bar{\omega} = 0.5$}};
    \draw[color=c3,very thin] (MAT10) -- (contourMAT10.west)  node[midway, above, sloped] {\tiny{$\bar{\omega} = 1$}};

\end{axis}
\end{tikzpicture}
\caption{\added{Approximated nondominated front for the straight joint. The associated Pareto critical shapes are shown for selected weightings/scalings.}}
\label{objective_space:bended_beam}
\end{figure}

\subsection{An S-Shaped Joint}
\label{subsec:Joint}

A more complex situation is obtained when 
the left and right boundaries are not fixed at the same height, i.e., when an S-shaped joint is to be designed. In our tests, we fix  the right boundary about $0.27\,\text{m}$ lower than the left boundary. 
The starting shape and its $41\times 7$ tetrahedral discretization $X$, that is used for all optimization runs, is shown in Figure~\ref{fig:TC2}. Figure~\ref{fig:TC2_sigma} highlights the stresses that are particularly strong towards the left boundary.  The respective objective values are $f_1(X^{(1)})=1.520058$ (intensity measure) and $f_2(X^{(1)})=0.2$ (volume), respectively. 
As can be expected, the intensity measure (and hence also the PoF) is considerably higher than in the case of the straight joint discussed in Section \ref{subsec:Beam}. 
Despite the significant smoothing induced by the B-spline representation of the initial shape shown in Figure~\ref{fig:RM02_Sigma_Startform}, it has an even higher value of the intensity measure of  $f_1(\gamma^{(1)})=1.910532$ (and hence a higher PoF value), while $f_2(\gamma^{(1)})=0.2$ remains constant.

\begin{figure}[!htb]
	\begin{center}
	\subfloat[Starting shape: Tetrahedral mesh $X$ \label{fig:TC2}]
	    {\includegraphics[width=0.3\textwidth]{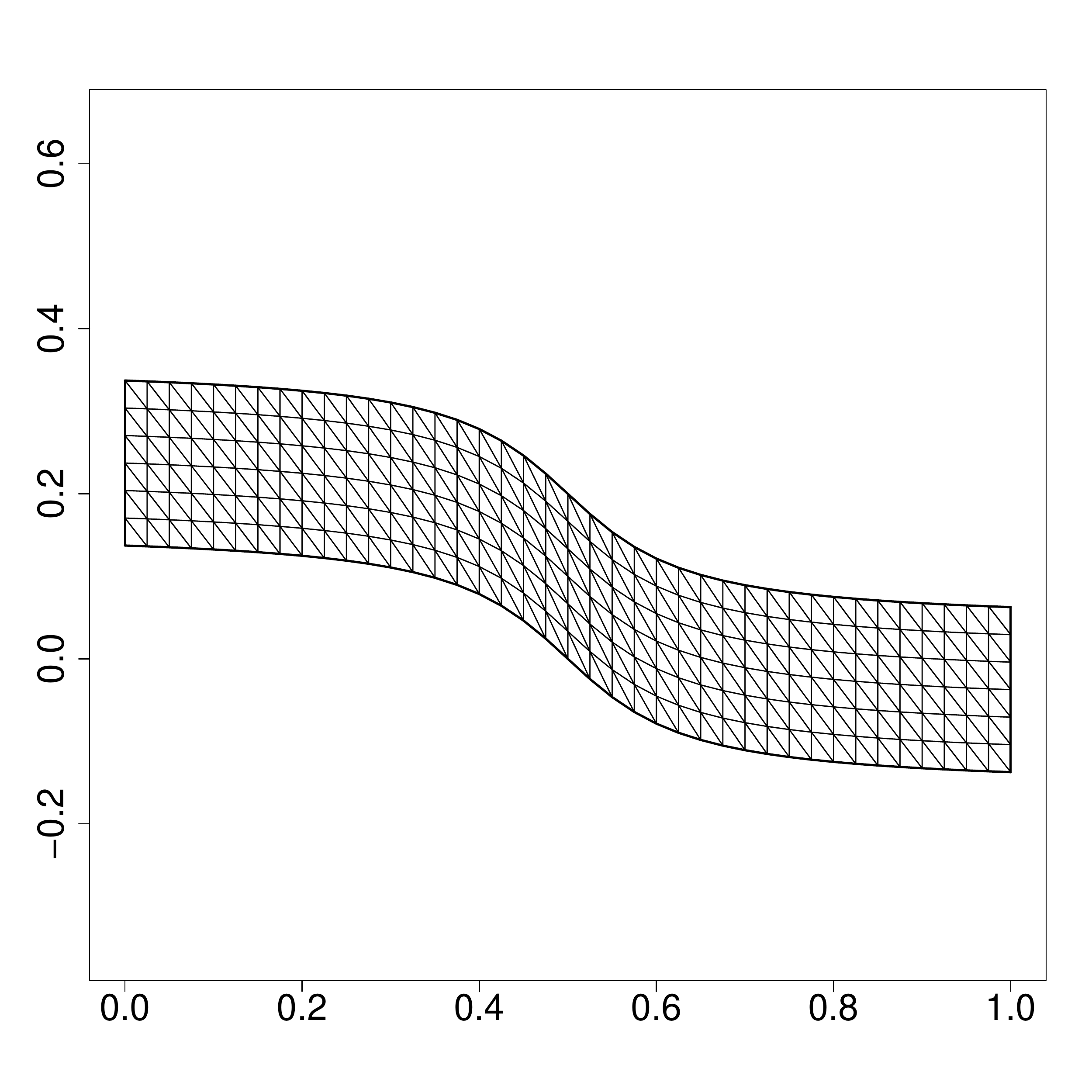}}
	\hspace{\fill}
	\subfloat[Starting shape: Objective values and stresses\label{fig:TC2_sigma}]
	    {\includegraphics[width=0.3\textwidth]{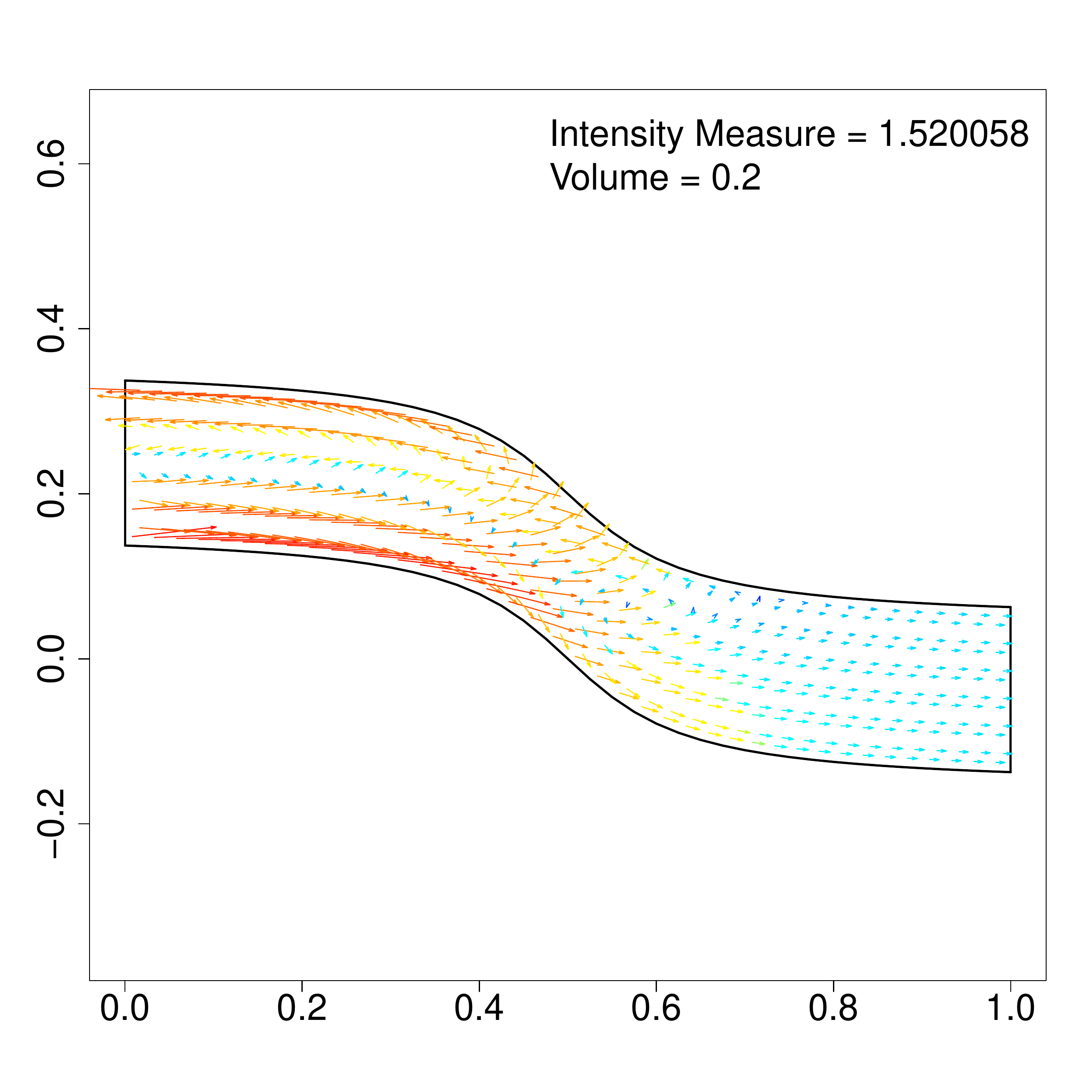}}
	\hspace{\fill}
	\subfloat[Starting shape: Approximation with B-splines\label{fig:RM02_Sigma_Startform}]
	    {\includegraphics[width=0.3\textwidth]{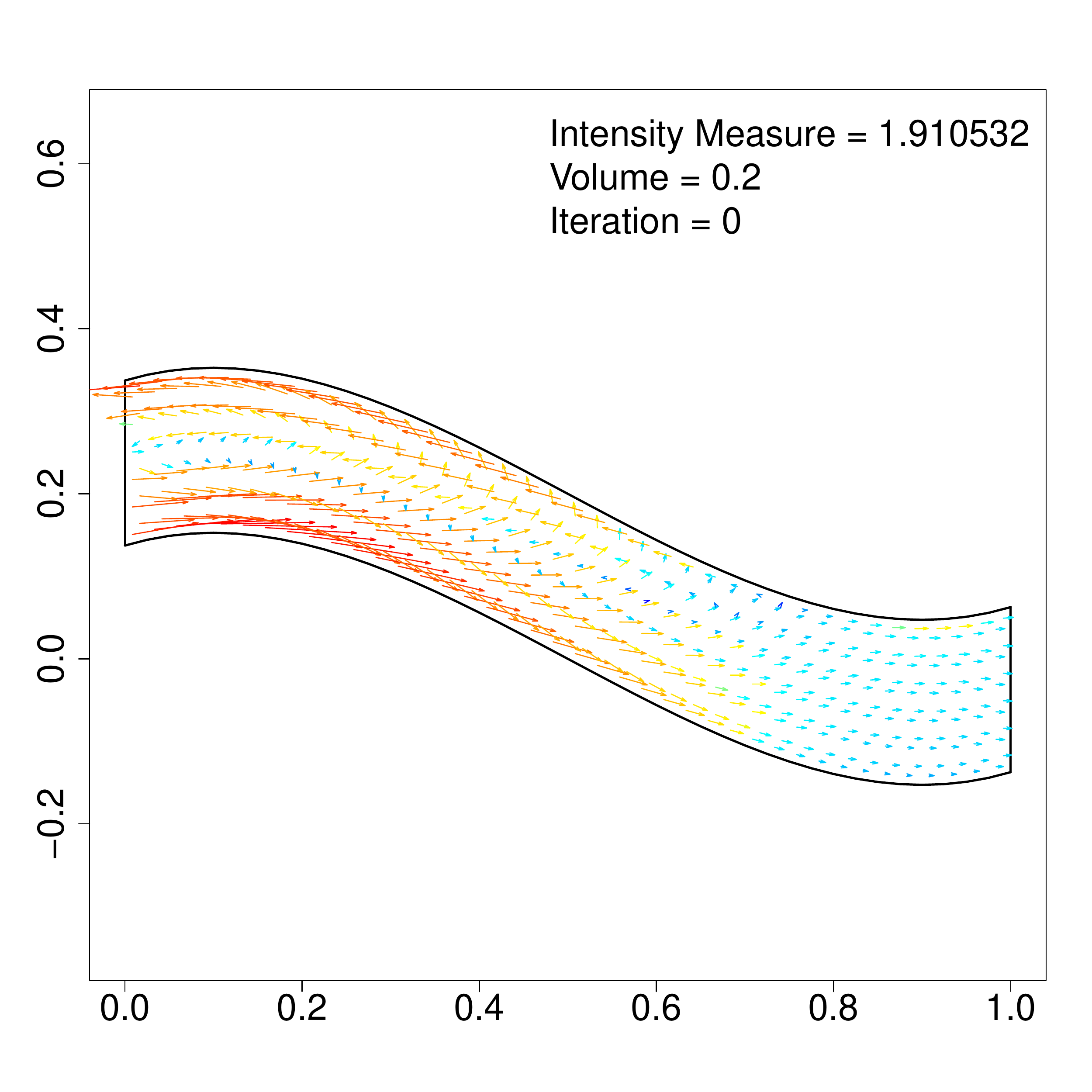}}
	\hspace{\fill}
	\subfloat[\added{MO descent, $\bar{\omega} = 1.1$\label{fig:TC2_MAT11}}]
	    {\includegraphics[width=0.3\textwidth]{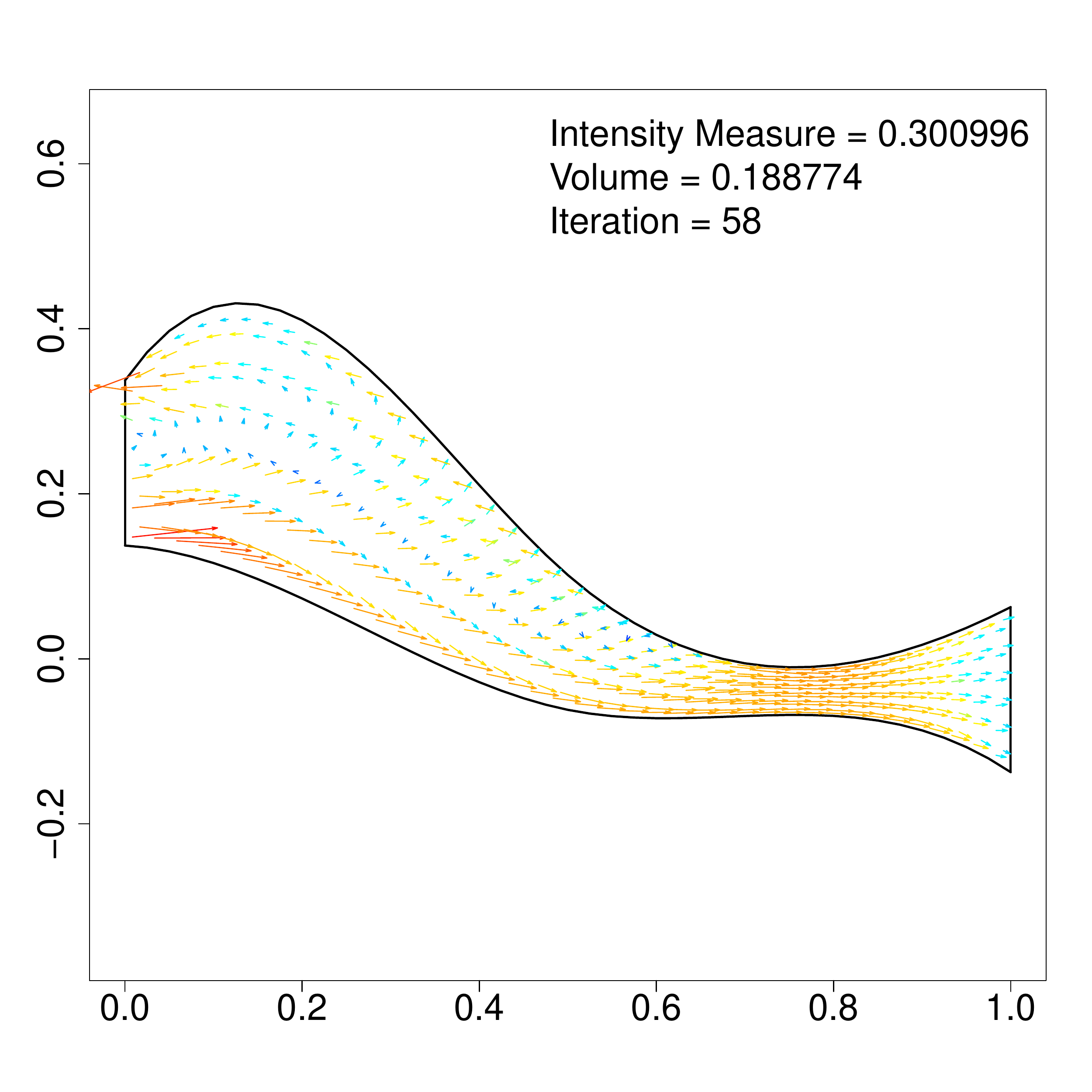}}	
    \hspace{\fill}	
	\subfloat[\added{Weighted sum, $\omega = 0.8$\label{fig:TC2_omega080}}]
	    {\includegraphics[width=0.3\textwidth]{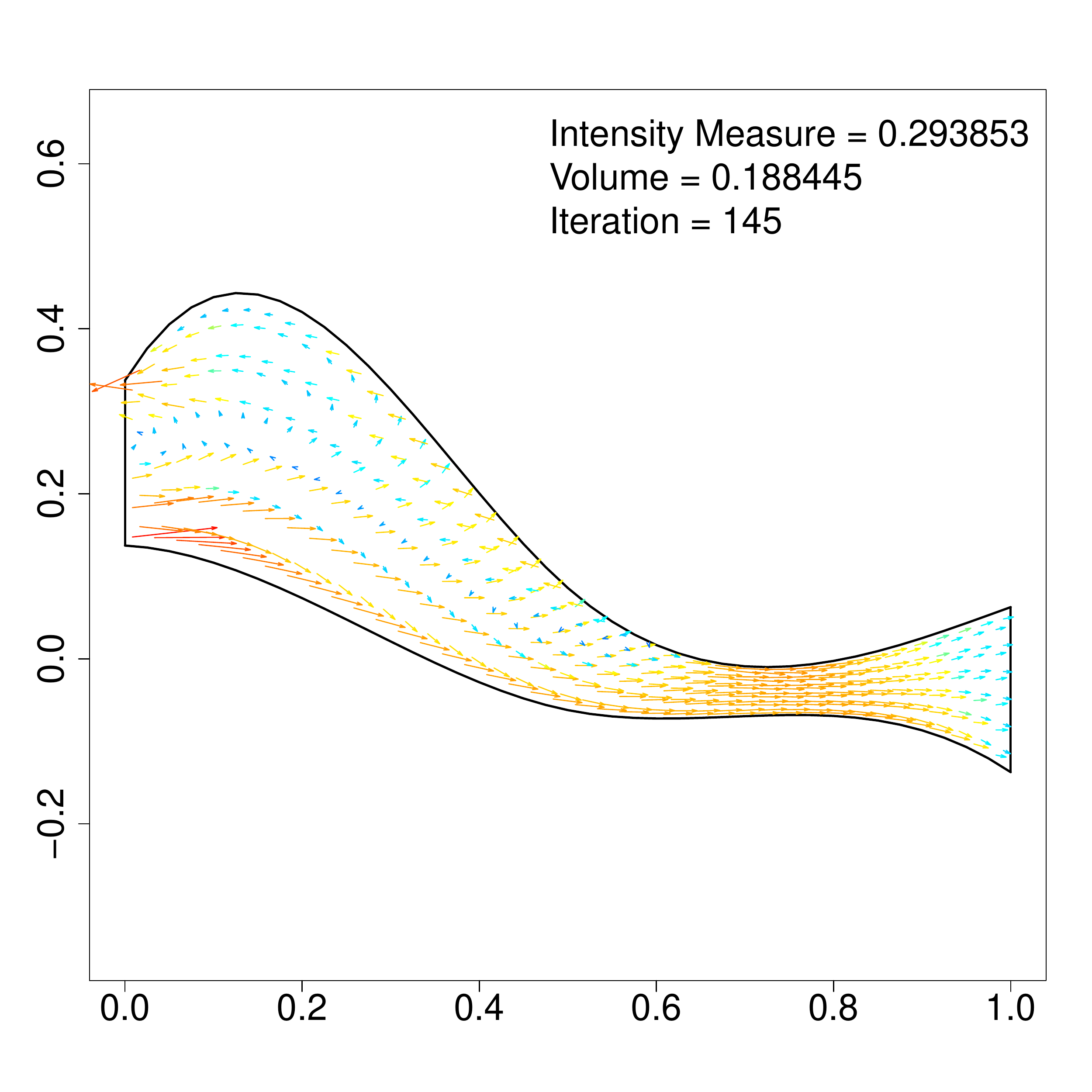}}	
	\hspace{\fill}	
	\subfloat[\added{Weighted sum, $\omega = 0.85$, not converged\label{fig:TC2_omega085}}]
	    {\includegraphics[width=0.3\textwidth]{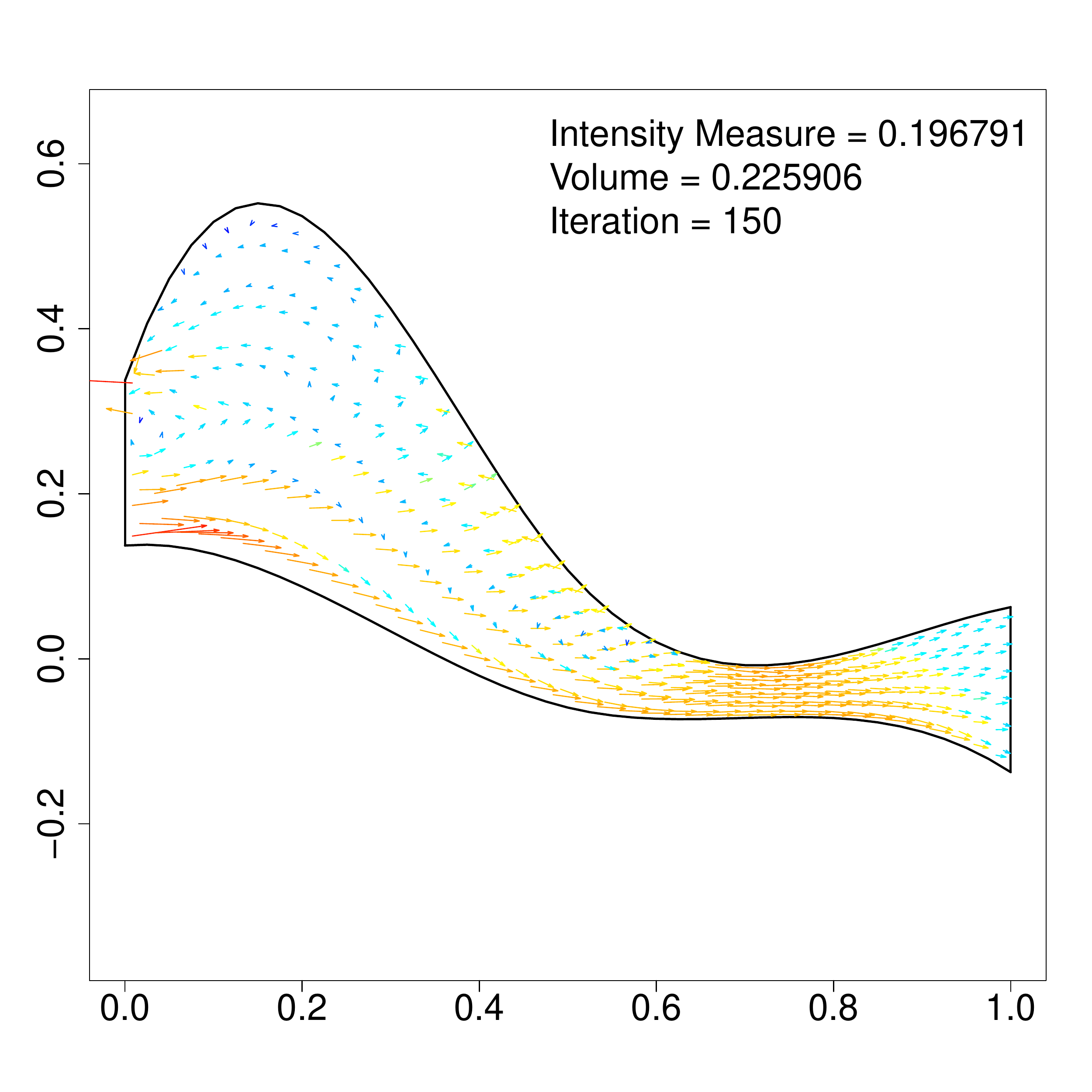}}
	\hspace{0.3\textwidth}
	\hspace{\fill}	
	\caption{S-shaped joint: Starting solution (row 1), \added{two}
	 exemplary Pareto critical solutions (\added{\ref{fig:TC2_MAT11} and \ref{fig:TC2_omega080}})
	\added{, and a not converged solution of the weighted sum method (\ref{fig:TC2_omega085})}.\label{fig:sshaperesults}}
	\end{center}
\end{figure}

\paragraph{Results}
We observe that the resulting shapes resemble the profile of a whale. If we consider 1st principal stress of the stress tensor on the grid points of the initial shape resulting from tensile load, see Figure \ref{fig:RM02_Sigma_Startform}, we observe an anti clockwise eddy in the left part of the joint. The hunch close to the left boundary of the optimized shapes gives room for the occurring stresses and therefore improves the intensity measure and, likewise, the PoF.

Note that, different from the case of the straight rod, we have no prior knowledge on the Pareto optimal shapes. For the solutions shown in Figures~\added{\ref{fig:TC2_MAT11} and \ref{fig:TC2_omega080},} 
we can only guarantee that they are (approximately) Pareto critical, i.e., the respective optimization runs terminated due to the criticality test. 
{Figure~\added{\ref{fig:TC2_omega085}}
shows a 
shape with a \added{significantly}
higher volume of \added{$f_2(X)=0.225906$,}
and with a largely improved intensity measure of \added{$f_1(X)=0.196791$.}
This shape was obtained with the weighted sum method with weight \added{$\omega=0.85$ after $150$ iterations. In this case, the algorithm terminated since it reached the maximum number of iterations and not due to convergence. We observed that all optimization runs of the weighted sum method with $\omega\geq 0.85$ were not converging in this setting. Thus in these cases it is not guaranteed, that the resulting solutions are Pareto critical.}
Note that, given a starting solution with a volume of $0.2$, this shape is not attainable with the biobjective descent algorithm.}

However, there is no guarantee that the computed shapes are Pareto optimal. \added{For example, the shape shown in Figure~\ref{fig:TC2_omega080} obtained with the weighted sum method with weight $\omega=0.8$ achieves objective values of $f_1(X)=0.293853$ and $f_2(X)=0.188445$, and hence slightly dominates the shape shown in Figure~\ref{fig:TC2_MAT11} obtained with the biobjective descent algorithm with scaling parameter $\bar{\omega}=1.1$ that has objective values $f_1(X)=0.300996$ and $f_2(X)=0.188774$.}

Figure~\ref{objective_space:s-joint} summarizes the results of several optimization runs of both Algorithms~\ref{alg:weightedsum} and \ref{alg:fliege} in the objective space.
\added{Note that not all solutions of the weighted sum method lie on the convex hull of the computed points (and are thus not globally optimal for a weighted sum scalarization).}
\added{In some cases, the biobjective descent algorithm also computes dominated points, while in other cases it found solutions that lie even below the convex hull of the weighted sum solutions (see, e.g., the result for $\bar{\omega} = 0.5$ in Figure \ref{objective_space:s-joint}).}

A larger range of alternative objective vectors is, as in the case of the straight rod, obtained with the weighted sum method. 
A cross-test between the two methods, where the final solution of Algorithm~\ref{alg:weightedsum} was used as starting solution for Algorithm~\ref{alg:fliege}, confirms that local Pareto critical solutions were found \added{for $\omega \leq 0.8$.}

\begin{figure}

	\centering
		\definecolor{c1}{HTML}{3FA137}
		\definecolor{c2}{HTML}{405CB0}
		\definecolor{c3}{HTML}{A0306F}
		\definecolor{c4}{HTML}{C7A748}
		\begin{tikzpicture} 
		\begin{axis}[axis lines=middle,
		width=12cm,
		xmin=0.15,
		xmax=0.78, 
		ymin=0.1,
		ymax=0.25,
		label style={font=\footnotesize}, 
		tick label style={font=\footnotesize},
		xlabel=intensity measure $f_1$, 
		ylabel={volume $f_2$},
		legend style={cells={anchor=west},font=\scriptsize, at={(axis cs:0.78,0.25)}}]

		\addplot[only marks, color=c3,mark=x, thick]	table[x=V1,y=V2]{Data/RM02_moda.txt};
 		\addplot[only marks, color=c2,mark=o, thick]	table[x=V1,y=V2]{Data/RM02_wsm.txt};
		
		\legend{biobjective descent algorithm,
			weighted sum method}
		
\node (node50) at (axis cs:	0.481938338455424,	0.151647772927598) {}; 
		\node (node75) at (axis cs:	0.323246357464224,	0.179059188836481) {}; 
		\node (node80) at (axis cs:	0.293842128614426,	0.188445327938195) {}; 
		\node (node85) at (axis cs:	0.196668758449793,	0.225934698214632) {}; 

		\node (MAT05) at (axis cs:0.331743476110563,  0.175853739723793) {}; 
		\node (MAT06) at (axis cs:0.32154182911223,   0.179716391319067) {}; 
		\node (MAT11) at (axis cs:0.300995884024783,  0.188773551554451) {}; 
		
		\node (contour80) at ($(node80)+(3cm,2.5cm)$) {\includegraphics[width = 0.1\textwidth]{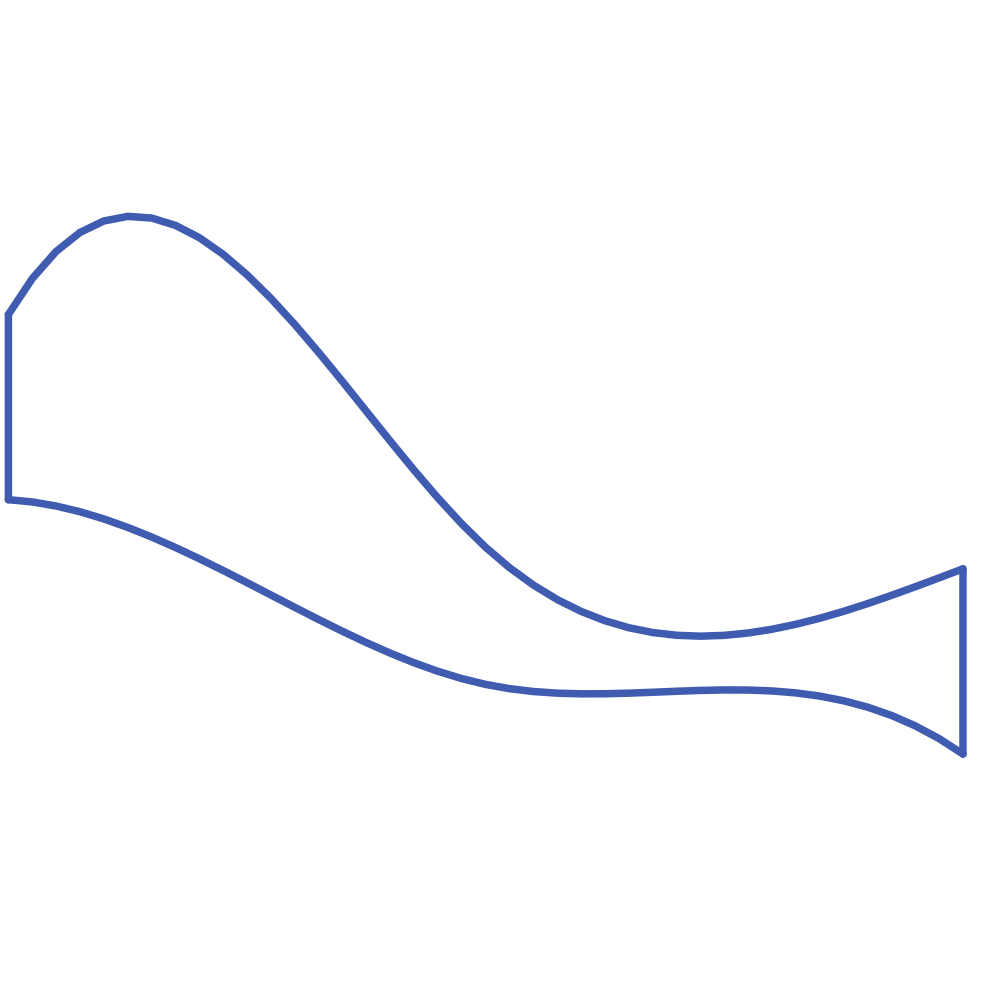}};
		\node (contourMAT11) at  ($(node80)+(3cm,1.6cm)$)  {\includegraphics[width = 0.1\textwidth]{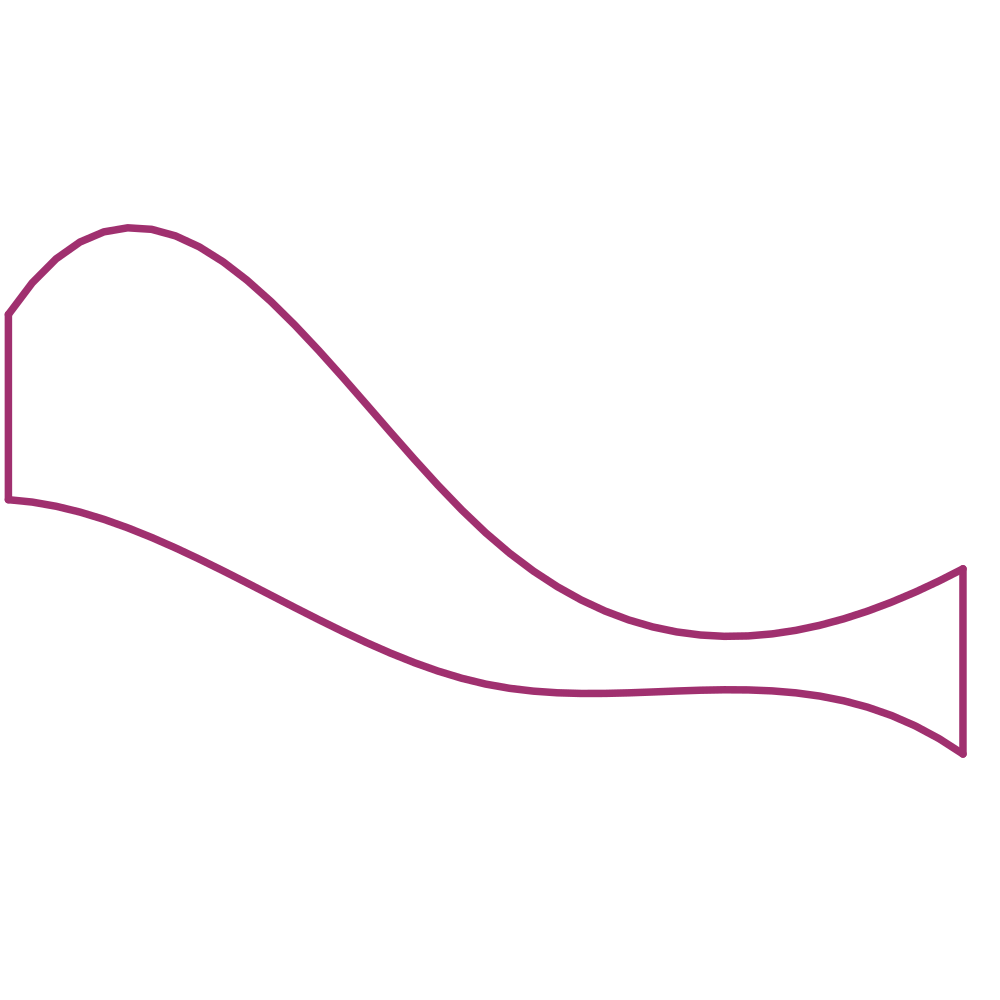}};
		
		\node (contour75) at ($(node75)+(3cm,1.1cm)$) {\includegraphics[width = 0.1\textwidth]{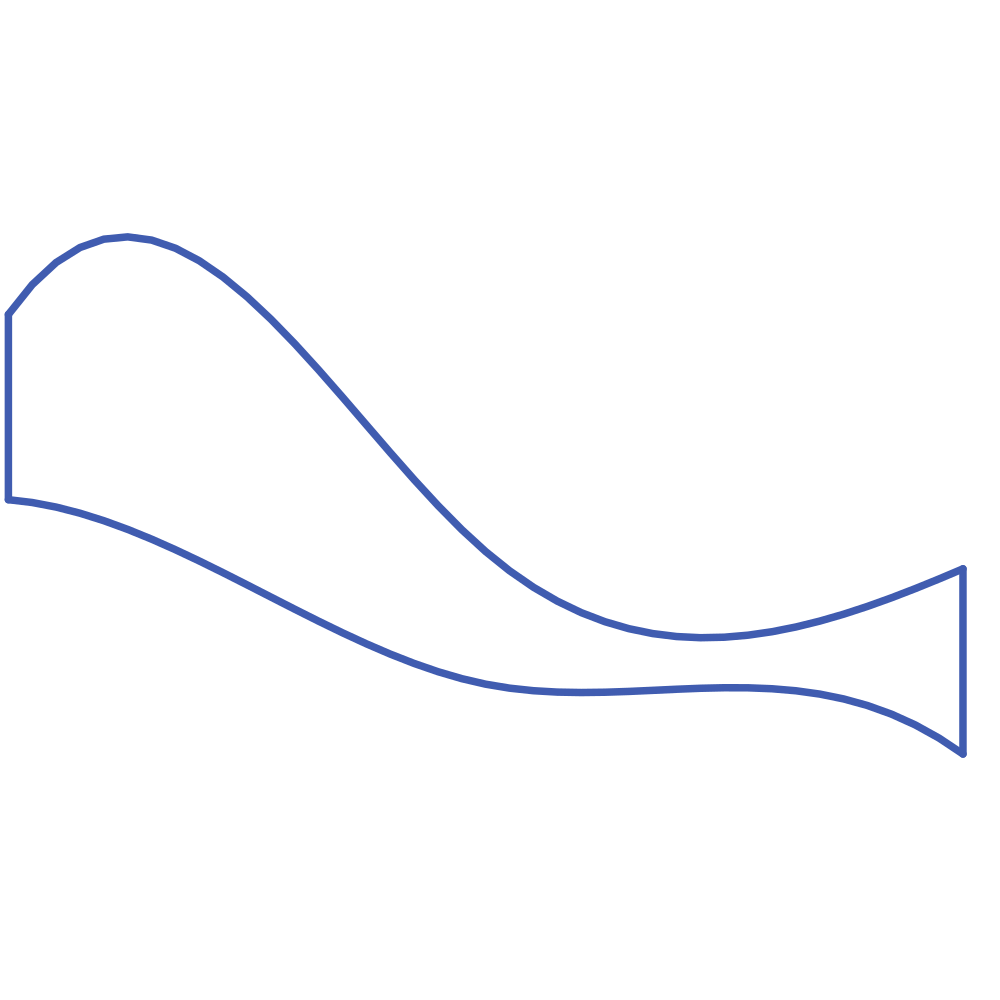}};
		
		\node (contourMAT05) at  ($(MAT05)+(3cm,0.4cm)$) {\includegraphics[width = 0.1\textwidth]{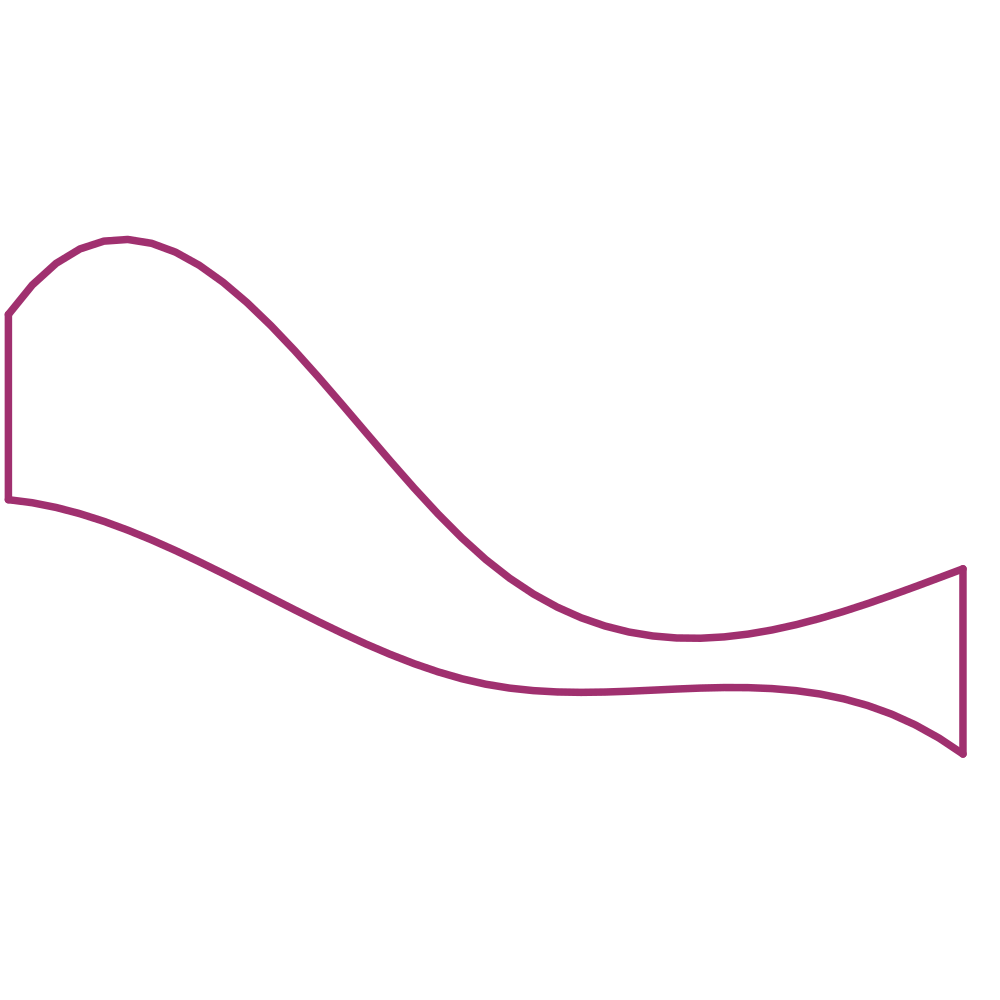}};
		
		\node (contour50) at ($(node50)+(3cm,1cm)$) {\includegraphics[width = 0.1\textwidth]{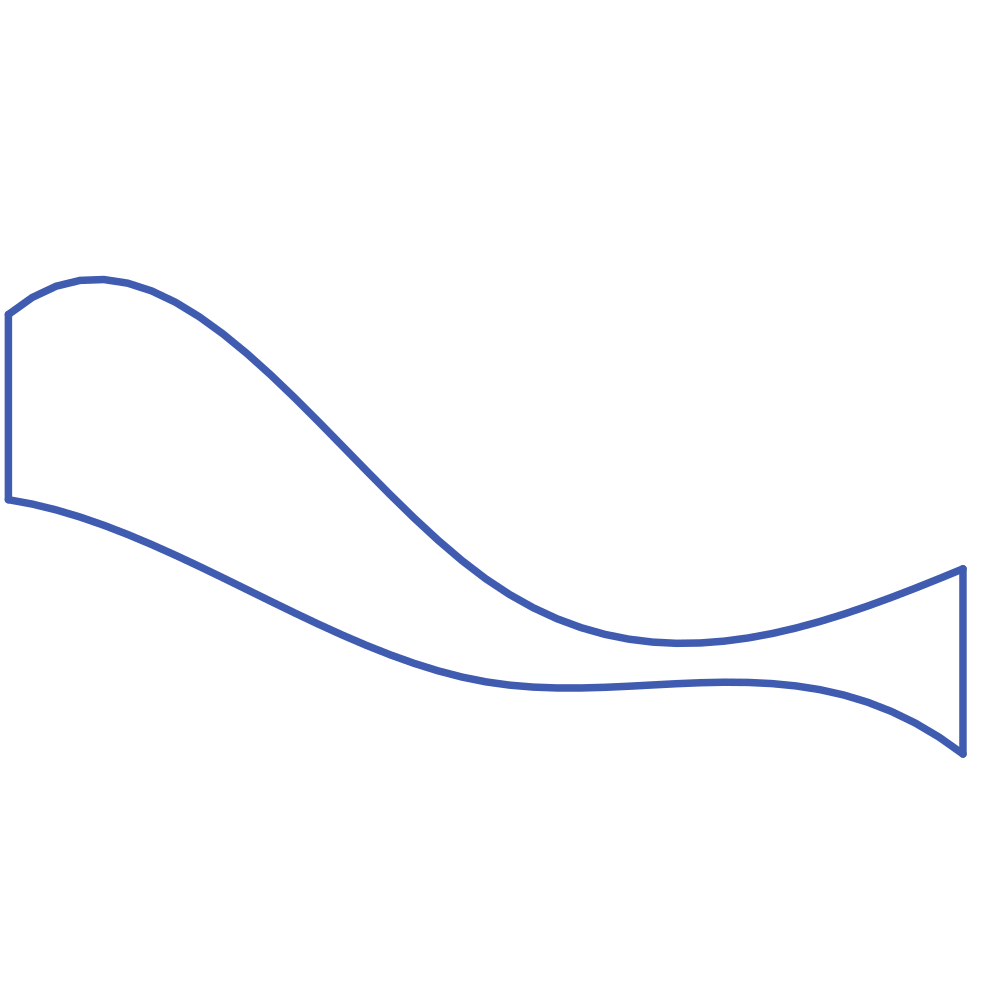}};
		\node (contour85) at ($(node85)+(2cm,0.8cm)$) {\includegraphics[width = 0.1\textwidth]{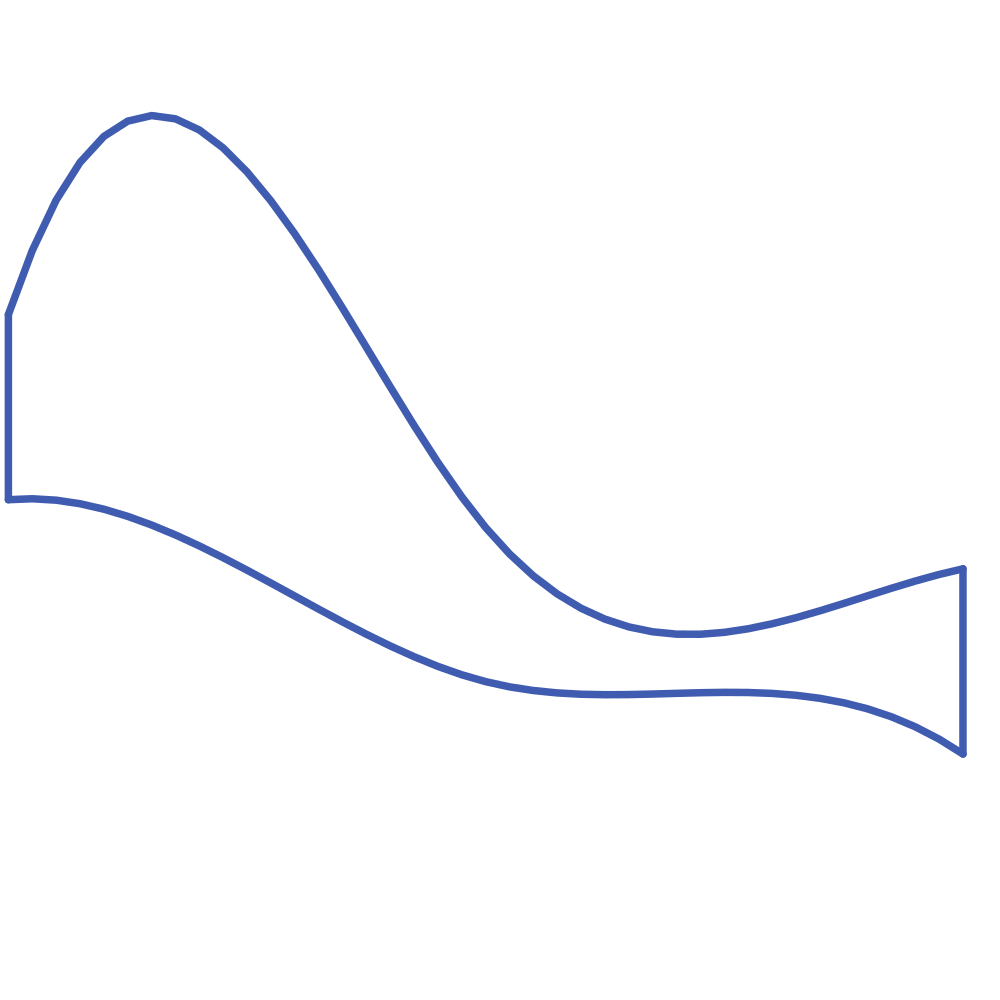}};
		
		\draw[color=c3,very thin] (MAT05) -- (contourMAT05.west)  node[midway, above, sloped] {\tiny{$\bar{\omega} = 0.5$}};
		\draw[color=c3,very thin] (MAT11) -- (contourMAT11.west)  node[midway, above, sloped] {\tiny{$\bar{\omega} = 1.1$}};
		\draw[color=c2,very thin] (node80) -- (contour80.west)  node[midway, above, sloped] {\tiny{$\omega = 0.8$}};
		\draw[color=c2,very thin] (node75) -- (contour75.west)  node[midway, above, sloped] {\tiny{$\omega = 0.75$}};
		\draw[color=c2,very thin] (node50) -- (contour50.west)  node[midway, above, sloped] {\tiny{$\omega = 0.5$}};
	    \draw[color=c2,very thin] (node85) -- (contour85.west)  node[midway, above, sloped] {\tiny{$\omega = 0.85$}};
		
		\end{axis}
\end{tikzpicture}

\caption{\added{Outcome vectors for the S-shaped joint. The associated Pareto critical shapes are shown for selected weightings / scalings.}}
\label{objective_space:s-joint}
\end{figure}


\section{Conclusion and Outlook}
\label{Sec:Outlook}

We have developed a modelling and solution approach  
for biobjective PDE constrained shape optimization of ceramic components. The mechanical integrity of the component on one hand, and the cost of the component on the other hand, were considered as two pivotal optimization criteria. A probabilistic approach was used to assess the mechanical integrity (i.e., the reliability) of the component, which allows, in combination with a finite element discretization and an adjoint approach for gradient computations, the efficient calculation of derivative information. Approximations of the Pareto front were computed using two different approaches: (1) parametric weighted sum scalarizations in combination with a single objective gradient descent method, and (2) a biobjective descent algorithm with parametric scalings of the objective functions. Numerical results for 2D test cases visualize the trade-off between the reliability and the cost, and hence pave the way for an informed selection of a most preferred design. 
A generalization to 3D shapes seems possible and is the next natural step. Moreover, further optimization criteria like, for example, reliability w.r.t.\ other loading scenarios,  minimal natural frequencies,  
and/or efficiency criteria, can be included into a general multiobjective shape optimization problem. 

\section*{Acknowledgement} 
This work was supported by the federal ministry of research and education (BMBF, grant-no:  05M18PXA ) as a part of the GIVEN consortium. 


\begin{thebibliography}{10}

\bibitem{allaire}
G.~Allaire.
\newblock {\em Shape Opimization by the Homogenisation Method}.
\newblock Springer-Verlag, Berlin-Heidelberg-New York, 2001.

\bibitem{allaire:2008}
G.~Allaire and F.~Jouve.
\newblock Minimum stress optimal design with the level set method.
\newblock {\em Eng. Anal. Bound. Elem.}, 32:909--218, 2008.

\bibitem{baeker}
M.~B\"aker, H.~Harders, and J.~R\"osler.
\newblock {\em Mechanisches Verhalten der Werkstoffe}.
\newblock Vieweg+Teubner, 3rd edition, 2008.

\bibitem{baza:nonl:2006}
M.~S. Bazaraa, H.~D. Sherali, and C.~M. Shetty.
\newblock {\em Nonlinear Programming -- Theory and Algorithms}.
\newblock Wiley, 3rd edition, 2006.

\bibitem{PaperHahn}
M.~Bolten, H.~Gottschalk, C.~Hahn, and M.~Saadi.
\newblock Numerical shape optimization to decrease failure probability of
  ceramic structures.
\newblock {\em Computation and Visualization in Science}, online first:to
  appear, 2019.

\bibitem{bolten}
M.~Bolten, H.~Gottschalk, and S.~Schmitz.
\newblock Minimal failure probability for ceramic design via shape control.
\newblock {\em J. Optim. Theory Appl.}, pages 983--1001, 2015.

\bibitem{braess}
D.~Braess.
\newblock {\em Finite Elements. Theory, Fast Solvers, and Applications in Solid
  Mechanics}.
\newblock Cambridge University Press, Cambridge, 1997.

\bibitem{bruecknerfoit}
A.~Br{\"u}ckner-Foit, T.~Fett, D.~Munz, and K.~Schirmer.
\newblock Discrimination of multiaxiality criteria with the brasilian disk
  test.
\newblock {\em J. Eur. Ceram. Soc.}, 17:689--696, 1997.

\bibitem{bucur}
D.~Bucur and G.~Buttazzo.
\newblock {\em Variational Methods in Shape Optimization Problems}.
\newblock Birkhäuser, 2005.

\bibitem{chenais}
D.~Chenais.
\newblock On the existence of a solution in a domain identification problem.
\newblock {\em Journal of Mathematical Analysis and Applications}, 52:189--289,
  1975.

\bibitem{chir:mult:2018}
D.~Chirkov, A.~Ankudinova, A.~Kryukov, S.~Cherny, and V.~Skorospelov.
\newblock Multi-objective shape optimization of a hydraulic turbine runner
  using efficiency, strength and weight criteria.
\newblock {\em Structural and Multidisciplinary Optimization}, 58:627--640,
  2018.

\bibitem{conti}
S.~Conti, H.~Held, M.~Pach, M.~Rumpf, and R.~Schultz.
\newblock Shape optimization under uncertainty - a stochastic programming
  perspective.
\newblock {\em SIAM J Optim.}, 19 (4):1610--1632, 2008.

\bibitem{dasdennis}
I.~Das and J.~E. Dennis.
\newblock A closer look at drawbacks of minimizing weighted sums of objectives
  for {P}areto set generation in multicriteria optimization problems.
\newblock {\em Structural optimization}, 14(1):63--69, Aug 1997.

\bibitem{Deb2001}
K.~Deb.
\newblock {\em Multi-Objective Optimization Using Evolutionary Algorithms}.
\newblock Wiley, 2001.

\bibitem{Deb2002}
K.~Deb and T.~Goel.
\newblock Multi-objective evolutionary algorithms for engineering shape design.
\newblock In {\em Evolutionary Optimization}, volume~48 of {\em International
  Series in Operations Research \& Management Science}, pages 147--175.
  Springer, Boston, MA, 2002.

\bibitem{delfour}
M.~Delfour and J.-P. Zol{\'e}sio.
\newblock {\em Shape and Geometries: Analysis, Differential Calculus, and
  Optimization}.
\newblock SIAM, 2nd edition, 2011.

\bibitem{desideri:MGDA}
J.-A. D{\'e}sid{\'e}ri.
\newblock Multiple-gradient descent algorithm ({MGDA}).
\newblock Research Report 00389811, {INRIA}, 2009.

\bibitem{desi:mult:2012}
J.-A. D{\'e}sid{\'e}ri.
\newblock Multiple-gradient descent algorithm ({MGDA}) for multiobjective
  optimization.
\newblock {\em Comptes Rendus Mathematique}, 350:313--318, 2012.

\bibitem{duran}
R.~G. Duran and M.~A. Muschietti.
\newblock The {K}orn inequality for {J}ones domains.
\newblock {\em Electronic J. Diff. Equations}, 127:1--10, 2004.

\bibitem{duysinx:1998}
P.~Duysinx and M.~Bends\oe.
\newblock Stress-based shape and topology optimization with the level set
  method.
\newblock {\em Internat. J. Numer. Methods Engrg.}, 43 (8):1453--1478, 1998.

\bibitem{ehrgott}
M.~Ehrgott.
\newblock {\em Multicriteria Optimization}.
\newblock Springer, Berlin, 2nd edition, 2005.

\bibitem{eppler2}
K.~Eppler.
\newblock On {H}adamard shape gradient representations in linear elasticity.
\newblock Unpublished manuscript, 2017.

\bibitem{eppler}
K.~Eppler, H.~Harbrecht, and R.~Schneider.
\newblock On convergence in elliptic shape optimization.
\newblock {\em SIAM J. Control Optim.}, 45:61--83, 2007.

\bibitem{Fliege2000}
J.~Fliege and B.~F. Svaiter.
\newblock Steepest descent methods for multicriteria optimization.
\newblock {\em Mathematical Methods of Operations Research}, 51(3):479--494,
  2000.

\bibitem{flie:comp:2018}
J.~Fliege, A.~Vaz, and L.~Vicente.
\newblock Complexity of gradient descent for multiobjective optimization.
\newblock {\em Optimization Methods and Software}, 2018.
\newblock to appear.

\bibitem{fuji}
N.~Fujii.
\newblock Lower semicontinuity in domain optimization problems.
\newblock {\em J. Optim. Theory Appl.}, 59:407--422, 1988.

\bibitem{giac:comp:2014}
M.~Giacomini, J.-A. D{\'e}sid{\'e}ri, and R.~Duvigneau.
\newblock Comparison of multiobjective gradient-based methods for structural
  shape optimization.
\newblock Technical Report RR-8511, {INRIA}, 2014.

\bibitem{gottschalk3}
H.~Gottschalk, C.~Hahn, K.~Klamroth, M.~Saadi, and S.~Schmitz.
\newblock Adjoint method to calculate the shape gradients of failure
  probabilities for turbomachinery components.
\newblock {\em ASME TURBO-Expo}, GT2018-75759, 2018.

\bibitem{gottschalk2}
H.~Gottschalk and S.~Mohamed.
\newblock Shape gradients for the failure probability of a mechanical component
  under cyclical loading.
\newblock {\em Computational Mechanics}, to appear, 2019.

\bibitem{gottschalk}
H.~Gottschalk and S.~Schmitz.
\newblock Optimal reliability in design for fatigue life.
\newblock {\em SIAM Journal of Control and Optimization}, 52 (5):2727--2752,
  2015.

\bibitem{gross}
D.~Gross and T.~Seelig.
\newblock {\em Fracture Mechanics. With an Introduction to Micromechanics}.
\newblock Springer, 2006.

\bibitem{haslinger}
J.~Haslinger and R.~A.~E. Mäkinen.
\newblock {\em Introduction to Shape Optimization}.
\newblock SIAM, 2003.

\bibitem{kallenberg}
O.~Kallenberg.
\newblock {\em Random Measures}.
\newblock Akademie-Verlag, Berlin, 1983.

\bibitem{laurain}
A.~Laurain and K.~Sturm.
\newblock Distributed shape derivative via averaged adjoint method and
  applications.
\newblock {\em ESAIM: Mathematical Modelling and Numerical Analysis}, 50
  (4):1241--1267, 2016.

\bibitem{michor}
P.~W. Michor and D.~Mumford.
\newblock Riemannian geomrties on spaces of planar curves.
\newblock {\em J. Europ. Math. Soc.}, 8:1--48, 2006.

\bibitem{Miettinen1999}
K.~Miettinen.
\newblock {\em Nonlinear Multiobjective Optimization}.
\newblock Kluwer Academic Publishers, Boston, 1999.

\bibitem{keramikverband}
R.~Morell.
\newblock Brevier technical ceramics.
\newblock Technical report, Verband der Keramischen Industrie e.V, Information
  Center Technical Ceramics, 2004.

\bibitem{munz}
D.~Munz and T.~Fett.
\newblock {\em Ceramics - Mechanical Properties, Failure Behaviour, Materials
  Selection}.
\newblock Springer, N.Y., Berlin, Heidelberg, 2001.

\bibitem{picelli:2018}
R.~Picelli, S.~Townsend, C.~Brampton, J.~Noratoc, and H.~A. Kimad.
\newblock Topology optimization of continuum structures with local stress
  constraints.
\newblock {\em Computer Methods in Applied Mechanics and Engineering},
  329:1--23, 2018.

\bibitem{nurbs}
L.~Piegl and W.~Tiller.
\newblock {\em The {NURBS} Book. Monographs in Visual Communication}.
\newblock Springer, 2000.

\bibitem{pull:comp:2003}
T.~Pulliam, M.~Nemec, T.~Holst, and D.~Zingg.
\newblock Comparison of evolutionary (genetic) algorithm and adjoint methods
  for multi-objective viscous airfoil optimization.
\newblock In {\em 41st Aerospace Science Meeting and Exhibit, 6-9 January 2003,
  Reno, Nevada}, number 2003-0298 in AIAA Paper, 2003.

\bibitem{riesch}
S.~Roudi, H.~Riesch-Oppermann, and O.~Kraft.
\newblock Advanced probabilistic tools for the uncertainty assessment in
  failure and lifetime prediction of ceramic components.
\newblock {\em Materialwissenschaften u. Werkstofftechnik}, 36:171--176, 2005.

\bibitem{schmitz}
S.~Schmitz.
\newblock {\em A Local and Probabilistic Model for Low-Cycle Fatigue.: New
  Aspects of Structural Analysis}.
\newblock Hartung-Gorre, 2014.

\bibitem{schmitz3}
S.~Schmitz, T.~Beck, R.~Krause, G.~Rollmann, T.~Seibel, and H.~Gottschalk.
\newblock A probabilistic model for {LCF}.
\newblock {\em Computational Materials Science}, 79:584--590, 2013.

\bibitem{schmitz2}
S.~Schmitz, T.~Seibel, H.~Gottschalk, T.~Beck, G.~Rollmann, and R.~Krause.
\newblock Probabilistic analysis of the {LCF} crack initiation life for a
  turbine blade under thermo-mechanical loading.
\newblock {\em Proc. Int. Conf LCF 7}, 2013.

\bibitem{schulz}
V.~Schulz.
\newblock A {R}iemannian view on shape optimization.
\newblock {\em Foundations of Computational Mathematics}, 14 (3):483--501,
  2014.

\bibitem{schulz2}
V.~Schulz.
\newblock Efficient {PDE} constrained shape optimization based on
  {S}teklov–{P}oincar{\'e}-type metrics.
\newblock {\em SIAM Journal on Optimization}, 26 (4):2800--2819, 2016.

\bibitem{crcmaterials}
J.~Shackelford and W.~Alexander, editors.
\newblock {\em CRC Materials Science and Engineering Handbook}.
\newblock CRC Press LLC, 4th edition, 2015.

\bibitem{sokolowski}
J.~Sokolovski and J.-P. Zolesio.
\newblock {\em Introduction to Shape Optimization - Shape Sensitivity
  Analysis}.
\newblock Springer, Berlin Heidelberg, 1992.

\bibitem{watanabe}
S.~Watanabe.
\newblock On discontinuous additive functionals and {L}{\'e}vy measures of a
  {M}arkov process.
\newblock {\em Japan J. Math.}, 34, 1964.

\bibitem{weibull}
E.~Weibull.
\newblock A statistical theory of the strength of materials.
\newblock {\em Ingeni{\"o}rsvetenskapsakedemiens Handlingar}, 151:1--45, 1939.

\bibitem{zava:asur:2014}
G.~Zavala, A.~Nebro, F.~Luna, and C.~Coello~Coello.
\newblock A survey of multi-objective metaheuristics applied to structural
  optimization.
\newblock {\em Structural and Multidisciplinary Optimization}, 49:537--558,
  2014.

\bibitem{zerb:meta:2014}
A.~Zerbinati, A.~Minelli, I.~Ghalane, and J.~D{\'e}sid{\'e}ri.
\newblock Meta-model-assisted {M}{G}{D}{A} for multi-objective functional
  optimization.
\newblock {\em Computers \& Fluids}, 102:116--130, 2014.

\end{thebibliography}


\end{document}